\documentclass[12pt]{amsart}
\usepackage[utf8]{inputenc}

\theoremstyle{plain}
\usepackage[mathscr]{eucal}
\usepackage{setspace}
\usepackage{amsmath, amsthm}
\usepackage{amssymb}
\usepackage{mathrsfs}
\usepackage{bbding}
\usepackage{bbold}
\usepackage{graphicx}
\usepackage{pifont}
\usepackage{verbatim}
\usepackage{eucal}
\usepackage{fancyhdr}
\usepackage{bm}
\usepackage{upgreek}
\usepackage{mathtools}
\usepackage{stmaryrd}
\usepackage [english]{babel}
\usepackage [autostyle, english = american]{csquotes}
\MakeOuterQuote{"}
\usepackage{hyperref}

\usepackage{dutchcal}
\usepackage{xcolor}

\newtheorem{theorem}{Theorem}[section]
\newtheorem{lemma}[theorem]{Lemma}

\newtheorem{claim}{Claim}
\newtheorem{corollary}[theorem]{Corollary}
\newtheorem{proposition}[theorem]{Proposition}

\theoremstyle{definition}
\newtheorem{definition}[theorem]{Definition}

\theoremstyle{remark}

\topmargin 15pt
\advance \topmargin by -\headheight
\advance \topmargin by -\headsep
\textheight 8.6in
\oddsidemargin 0pt
\evensidemargin \oddsidemargin
\marginparwidth 0.5in
\textwidth 6.5in

\numberwithin{equation}{section}
\DeclareSymbolFont{AMSb}{U}{msb}{m}{n}
\DeclareMathSymbol{\N}{\mathbin}{AMSb}{"4E}
\DeclareMathSymbol{\Z}{\mathbin}{AMSb}{"5A}
\DeclareMathSymbol{\R}{\mathbin}{AMSb}{"52}
\DeclareMathSymbol{\Q}{\mathbin}{AMSb}{"51}
\DeclareMathSymbol{\C}{\mathbin}{AMSb}{"43}
\renewcommand{\emptyset}{\varnothing}
\renewcommand{\ll }{\langle\hspace{-.7mm}\langle }
\newcommand{\rr }{\rangle\hspace{-.7mm}\rangle }
\DeclareMathAlphabet\mathbfcal{OMS}{cmsy}{b}{n}

\title{A continuum of non-measure equivalent groups}

\begin{document}

\begin{abstract}
We construct a continuum sized family $\{G_x\}_{x\in\{0,1\}^{\mathbb N}}$ of pairwise non-measure equivalent countable groups which have property (T) (hence are finitely generated), have zero $\ell^2$-Betti numbers of all orders, and are torsion-free.

We also prove that the equivalence relation $\simeq^{\mathrm{fg}}_{\mathrm{ME}}$ of measure equivalence between finitely generated groups is non-smooth, resolving a question of S. Thomas. 
Our proof moreover shows that $\simeq^{\mathrm{fg}}_{\mathrm{ME}}$ sits above every countable Borel equivalence relation in the Borel reducibility hierarchy.
\end{abstract}

\author{Adrian Ioana}
\address{Department of Mathematics \\ University of California San Diego \\ 9500 Gilman Drive \\ La Jolla \\ CA
92093 \\ USA}
\email{aioana@ucsd.edu}
\thanks{AI was supported by NSF grants DMS-2153805 and DMS-2451697, and the Presidential Chair in Mathematics}

\author{Robin Tucker-Drob}
\address{Department of Mathematics \\ University of Florida \\ 1400 Stadium Road \\ Gainesville \\ FL 32611 \\ USA}
\email{r.tuckerdrob@ufl.edu}
\thanks{RTD was supported by NSF grant DMS-2246684}

\maketitle
\section{Introduction}
The notion of measure equivalence was introduced by Gromov in \cite{Gr}  as a measurable counterpart to the notion of quasi-isometric equivalence of groups (see the survey \cite{Fu}). Two countably infinite groups $G$ and $H$ are said to be {\it measure equivalent} if they admit commuting measure preserving actions on an infinite measure space, such that both of their actions  have fundamental domains of finite measure. Several natural families of countable groups are known to be pairwise measure equivalent: (i) lattices in a fixed locally compact second countable group, (ii) infinite amenable groups \cite{OW}, and (iii) finitely generated non-abelian free groups. 

On the other hand, in order to distinguish groups up to measure equivalence, one uses either measure equivalence invariants or  rigidity results. 
Measure equivalence invariants come in two flavors. First, we have binary invariants associated with properties that are preserved under measure equivalence. Examples of such properties include amenability, Kazhdan's property (T), Haagerup's property, containment in various classes ($\mathcal C_{\text{reg}}$, $\mathcal C$ \cite{MS06} and $\mathcal S$ \cite{Sa}), treability \cite{KM}, weak amenability \cite{Jo} and proper proximality \cite{BIP,IPR}. Second, we have numerical invariants that are preserved (or scaled appropriately) under measure equivalence, including the Cowling-Haagerup invariant \cite{CH,CZ,Jo}, cost (for groups of fixed price) \cite{Ga00}, $\ell^2$-Betti numbers and ergodic dimension \cite{Ga02}.
 
 Measure equivalence rigidity results aim to show that, within certain families of groups, various aspects of the groups can be recovered from their measure equivalence classes, see, e.g., \cite{MS06,Sa,AG,HH22,EH}.
 The most extreme rigidity results in this context completely determine the measure equivalence class of a given group, often showing that it remembers the group itself. Such superrigidity phenomena have been obtained for several groups, including higher-rank lattices \cite{Fu99}, mapping class groups \cite{Ki10}, surface braid groups \cite{CK}, Out$(\mathbb F_N)$ for $N\geq 3$ \cite{GH},  and generalized Higman groups \cite{HH25}. 
 
 The works discussed above, whether they give measure equivalence invariants or establish rigidity statements, lead to infinite but typically countable families of pairwise non-measure equivalent groups. As such, it remains a challenging problem to construct large (uncountable) families of non-measure equivalent groups. We contribute to this problem here by proving the following:
 
\begin{theorem}\label{A}
There exists a continuum sized family $\{G_x\}_{x\in \{0,1\}^\mathbb N}$ of pairwise non-measure equivalent  ICC countable groups
 which have property (T) (and hence are finitely generated).

Moreover, each $G_x$ satisfies $\beta^{(2)}_k(G_x)=0$, for every $k\geq 1$, and can be taken torsion-free.
\end{theorem}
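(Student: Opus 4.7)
My plan is to construct the family $\{G_x\}_{x \in \{0,1\}^{\mathbb{N}}}$ as wreath-like products over a hyperbolic property (T) base, following the template developed by Chifan--Ioana--Osin--Sun and related authors, so that the parameter $x$ is faithfully encoded in the isomorphism type of $G_x$ and can subsequently be recovered from its measure equivalence class. Concretely, I would begin with an ICC torsion-free Gromov hyperbolic group $\Gamma$ with property (T) (such as those produced by Belegradek--Osin's embedding theorem) and, for each $x \in \{0,1\}^{\mathbb{N}}$, choose an infinite-index subgroup $K_x \leq \Gamma$ from a parametrized family, e.g.\ as the kernel of a hyperbolic quotient of $\Gamma$ obtained by small cancellation or Dehn filling with a list of relators read off from $x$. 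Then set $G_x := A^{(\Gamma/K_x)} \rtimes \Gamma$, where $A$ is a fixed infinite torsion-free abelian group and $\Gamma$ permutes the fibers by left translation. By construction the isomorphism type of $G_x$ retains the data of the pair $(\Gamma, K_x)$, and distinct values of $x$ yield non-commensurable $K_x$.

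The basic properties are relatively standard to verify. Property (T) of $G_x$ follows from property (T) of $\Gamma$ together with relative property (T) of the fiber module $A^{(\Gamma/K_x)}$, and ICC follows from ICC of $\Gamma$ and faithfulness of the action on $\Gamma/K_x$. Torsion-freeness is inherited from torsion-freeness of $A$ and $\Gamma$. Finally, all $\ell^2$-Betti numbers $\beta^{(2)}_k(G_x)$ vanish for $k \geq 1$ by the Cheeger--Gromov theorem, since $G_x$ contains the infinite normal amenable subgroup $A^{(\Gamma/K_x)}$.

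The main step -- and the main obstacle -- is the ME-rigidity argument. Suppose $G_x$ and $G_y$ are measure equivalent; then they admit free ergodic p.m.p.\ actions whose orbit equivalence relations are stably isomorphic, inducing a stable isomorphism $M_x \cong M_y^t$ of the associated group-measure-space II$_1$ factors. The plan is to prove a unique Cartan subalgebra theorem for $M_x$ (in the style of Ozawa--Popa and Popa--Vaes, adapted to the wreath-like structure of $G_x$), promoting the factor isomorphism to a stable orbit equivalence of the natural actions. Popa's cocycle superrigidity for property (T) groups acting on Bernoulli-like sources then upgrades this orbit equivalence to a virtual group isomorphism $G_x \simeq G_y$, and the rigidity of the encoding from the construction forces $x = y$. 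The central difficulty is harmonizing the competing demands on the family: $\{G_x\}$ must simultaneously be (i) rigid enough that a unique Cartan subalgebra theorem and cocycle superrigidity both apply, (ii) diverse enough to admit a continuum of non-virtually-isomorphic instances, and (iii) tame enough that torsion-freeness and the vanishing of all higher $\ell^2$-Betti numbers are preserved throughout. The bulk of the work is expected to lie in the unique-Cartan / cocycle-superrigidity package, and in verifying that the parametric dependence on $x$ really does survive the passage through ME.
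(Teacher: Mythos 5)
There is a fatal gap at the very first step: your groups $G_x=A^{(\Gamma/K_x)}\rtimes\Gamma$ with $A$ an infinite torsion-free abelian group can never have property (T). Since the extension is split and $A$ is abelian, the summation map $A^{(\Gamma/K_x)}\to A$, $f\mapsto\sum_{z}f(z)$, is $\Gamma$-equivariant for the trivial action on the target and kills all commutators $[g,f]=f-g\cdot f$, so it extends to a surjective homomorphism $G_x\twoheadrightarrow A$. A group with property (T) has no infinite abelian quotient, so $G_x$ fails (T) regardless of any relative property (T) considerations; in particular the claim that (T) of $\Gamma$ plus relative (T) of the fiber yields (T) of $G_x$ cannot be repaired, because the conclusion is false. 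This is exactly the obstruction that forces the paper to work with \emph{wreath-like} products in the sense of Chifan--Ioana--Osin--Sun, i.e.\ non-split extensions $\{e\}\to A^{(B)}\to G\to B\to\{e\}$ produced by Dehn filling of hyperbolic property (T) groups (Proposition \ref{WR}); genuine (permutational) wreath products over an infinite acting set do not have (T).

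The measure equivalence rigidity step is also not viable as proposed. Popa's cocycle superrigidity applies to specific malleable actions (Bernoulli-type) of the group, whereas the actions and cocycles arising from an arbitrary ME-coupling of $G_x$ and $G_y$ are completely general; there is no mechanism to transfer the coupling to a Bernoulli action, so the ``unique Cartan plus cocycle superrigidity'' package does not produce a virtual isomorphism $G_x\simeq G_y$ from measure equivalence (if it did, one would obtain ME-superrigidity of these groups, which is far beyond what is needed or established). The paper instead proves a cocycle rigidity dichotomy (Theorem \ref{CR}, Corollary \ref{CR2}) for arbitrary ME-cocycles with values in a hyperbolic group, showing the cocycle is either amenable or essentially trivial on the base $A^{(B)}$; combined with Lemma \ref{lem:ME_ICC_quotient} and Proposition \ref{prop:coupling_subgroup_quotient} this lets one \emph{descend} measure equivalence from $G_x,G_y$ to the bases $D_x^{(B)},D_y^{(B)}$ (Theorem \ref{descend}), and then an extension of Monod--Shalom product rigidity to infinite direct sums of ICC class $\mathcal C$ groups (Theorem \ref{thm:MSinfiniteICCsum}) recovers the index set: the continuum is encoded not in a subgroup $K_x$ of the acting group but in which members of a fixed family $\{A_k\}$ of pairwise non-ME, finitely generated, torsion-free ICC class $\mathcal C$ groups occur in $D_x=\bigoplus_{k\in x}A_k$. (Your Cheeger--Gromov argument for vanishing of $\ell^2$-Betti numbers would be fine for a group with infinite normal amenable subgroup, but the paper's groups need K\"unneth plus L\"uck's theorem since their bases are not amenable.)
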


Our construction is flexible enough to produce the following strengthening of Theorem \ref{A}, using the framework of Borel reducibility; see \S\ref{sec:Borel} for the relevant definitions.

\begin{theorem}\label{ctblered} Let $\simeq^{\mathrm{fg}}_{\mathrm{ME}}$ be the equivalence relation of measure equivalence on the space $\mathcal{G}_{\mathrm{fg}}$ of marked finitely generated groups.
There exists a Borel subset $Z$ of $\mathcal{G}_{\mathrm{fg}}$, consisting of ICC, torsion-free, property (T) groups whose $\ell^2$-Betti numbers all vanish, such that 
\begin{enumerate}
\item Every countable Borel equivalence relation is Borel reducible to $\simeq^{\mathrm{fg}}_{\mathrm{ME}}|_Z$.  

\item The isomorphism relation on $Z$ is a universal countable Borel equivalence relation.
\end{enumerate}
Moreover, there is a property (T) group $V$ such that every group in $Z$ is a quotient of $V$. 
\end{theorem}

Part (1) implies that measure equivalence on the space of marked finitely generated groups is non-smooth, thereby answering a longstanding question of Thomas \cite{ThomasCayleygraphs}. 
Part (2) proves a conjecture of Thomas and Williams. In  \cite[Theorem 1.5]{TW}, they prove that isomorphism of property (T) groups is weakly universal, and in \cite[Conjecture 1.6]{TW} they conjecture that it is, in fact, universal.

Gaboriau's work on cost and $\ell^2$-Betti numbers \cite{Ga00,Ga02} can be used to construct continuum many  countable groups which are pairwise non-measure equivalent, see Proposition \ref{prop:direct_sum_family}; note, however, that these groups  are  neither property (T) 
nor torsion-free. In \cite{LN}, L\'{o}pez Neumann constructed an infinite, but countable, family of finitely presented simple property (T) groups which are pairwise non-measure equivalent. 
Recently, Drimbe and Vaes found in \cite[Theorem A]{DV} a continuum of torsion-free  but not finitely generated countable groups which are pairwise not von Neumann equivalent and in particular not measure equivalent.

In a preliminary version of our paper, we proved the main assertion of Theorem \ref{A} for a family of groups $\{G_x\}_{x\in\{0,1\}^{\mathbb N}}$ 
which were constructed using Proposition \ref{prop:direct_sum_family} and thus  contain torsion. Independently of this preliminary result, Fournier-Facio and Sun found in \cite[Theorem D]{FS} a continuum $(M_S)_S$ of non-measure equivalent finitely generated groups which are also  torsion-free  and which can also be chosen to have property (T). 
The groups  constructed in \cite{FS} are  indexed by sets $S\subset\mathbb N_{\geq 3}$ and have the remarkable property that the set $S$ can be
recovered from the $\ell^2$-Betti numbers $\{\beta_n^{(2)}(M_S)\}_{n\geq 3}$, considered up to a  scalar multiple. Since measure equivalent groups have proportional $\ell^2$-Betti numbers by \cite{Ga02},  the groups $(M_S)$ are non-measure equivalent.

After \cite{FS} was posted on the arXiv, we showed that a variation of our construction (using Proposition \ref{torsion-free-family} instead of Proposition \ref{prop:direct_sum_family}) ensures that the groups $\{G_x\}_{x\in\{0,1\}^{\mathbb N}}$ can be taken torsion-free, thus yielding the moreover assertion of Theorem \ref{A}. Since the  groups $\{G_x\}_{x\in\{0,1\}^{\mathbb N}}$ have zero $\ell^2$-Betti numbers of all orders, they are distinct from the groups considered in \cite{FS}.
Indeed, a principal novelty of Theorem \ref{A} consists of providing a continuum of non-measure equivalent finitely generated groups in the regime where the usual invariants -- $\ell^2$-Betti numbers and cost -- are trivial and thus provide no useful information.

In the remainder of the introduction, we outline the proof of Theorem \ref{A}. The groups $\{G_x\}_{x\in\{0,1\}^{\mathbb N}}$ arise as {\it wreath-like} products in the sense introduced by Chifan, Osin, Sun and the first author in  \cite{CIOS23a,CIOS23b}. We refer the reader to Subsection \ref{Wreath} for the definition of the class $\mathcal W\mathcal R(A,B)$ of wreath-like products of groups $A$ and $B$.  
Here we only note that the classical wreath product $A\wr B$ belongs to $\mathcal W\mathcal R(A,B)$, and that there exist many wreath-like products with property (T) belonging to $\mathcal W\mathcal R(A,B)$, for certain hyperbolic groups $B$.

The construction of $\{G_x\}_{x\in\{0,1\}^{\mathbb N}}$ relies on the following four ingredients, which appear to be of independent interest:
\begin{enumerate}
\item (Proposition \ref{torsion-free-family}) There exists an infinite family $\{A_k\}_{k\geq 1}$ of pairwise non-measure equivalent, torsion-free, ICC, $4$-generated groups belonging to the class $\mathcal C$ of \cite{MS06}. 
\vskip 0.1in
\item (Proposition  \ref{WR}) For every $n\in\mathbb N$, there exists an ICC nonamenable normal subgroup $D$ of a torsion-free hyperbolic group, with the following property: for any sequence $(C_k)_{k\in\mathbb N}$ of $n$-generated groups, the class  $\mathcal W\mathcal R(\bigoplus_{k\in\mathbb N}C_k,D)$ 
 contains a property (T) group. 
\vskip 0.1in

\item (Theorem \ref{descend}) Let  $A,A'$ be countable groups and $B,B'$  be ICC nonamenable subgroups of hyperbolic groups. If $G\in\mathcal W\mathcal R(A,B)$ and $G'\in\mathcal W\mathcal R(A',B')$ are measure equivalent, then $A^{(B)}$ and $A'^{(B')}$ are measure equivalent.
\vskip 0.1in

\item (Theorem \ref{thm:MSinfiniteICCsum}) If $(C_k)_{k\in\mathbb N},(C'_k)_{k\in\mathbb N}$ are ICC groups in the class $\mathcal C$ such that the product groups $\bigoplus_{k\in\mathbb N}C_k, \bigoplus_{k\in\mathbb N}C_{k}'$ are measure equivalent, then modulo a permutation of $\mathbb N$, the groups $C_k,C_k'$ are measure equivalent, for every $k\in\mathbb N$.
\end{enumerate} 

Theorem \ref{thm:MSinfiniteICCsum} extends  Monod and Shalom's well-known measure equivalence rigidity for products theorem \cite[Theorem 1.16]{MS06} from finite to infinite products, and from torsion-free to ICC groups in the class $\mathcal C$.
The same conclusion with class $\mathcal C$ groups replaced by nonamenable biexact groups was established recently by Ding and Drimbe in \cite[Theorem E]{DD}. 

The article \cite{DD} also established (as part of the proof of \cite[Corollary F]{DD}) the following case of Theorem \ref{descend}: if $A,A'$ are countable groups, $B,B'$  are ICC hyperbolic groups, and the wreath product groups $A\wr B,A'\wr B'$ are measure equivalent, then $A^{(\mathbb N)}$ and $A'^{(\mathbb N)}$ are measure equivalent. 
Note that in the proof of Theorem \ref{A} we use Theorem \ref{descend} in the case when $G\in\mathcal W\mathcal R(A,B),G'\in\mathcal W\mathcal R(A',B')$ have property (T), and thus are not wreath products. For other measure equivalence rigidity results for wreath product groups, see \cite{Sako09,CPS}; see \cite{TDW24} for some measure equivalence anti-rigidity results for wreath products.

Finally, we explain briefly how ingredients (1)-(4) are combined to prove Theorem \ref{A}. Let $\{A_k\}_{k\geq 1}$ be the groups provided by (1). 
For $x\in\{0,1\}^{\mathbb N}$ (viewed as a subset of $\mathbb N$), let $D_x=\bigoplus_{k\in x}A_k$. Then (2) implies the existence of a property (T) group $G_x\in\mathcal W\mathcal R(D_x,B)$. Moreover, $G_x$ is torsion-free since it is built from the torsion-free groups $\{A_k\}_{k\geq 1}$ and $B$.  
If $G_x$ and $G_y$ are measure equivalent, then (3) implies that $D_x^{(\mathbb N)}\cong\oplus_{k\in x}A_k^{(\mathbb N)}$ and $D_y^{(\mathbb N)}\cong\oplus_{k\in y}A_k^{(\mathbb N)}$ are also measure equivalent. Using the defining properties of the sequence $\{A_k\}_{k\geq 1}$ from (1), the product rigidity statement from (4) implies that $x=y$. 

\subsection*{Acknowledgements and AI tool disclosure} We are grateful to Changying Ding for helpful comments.

Gemini was used for English language proofreading and grammatical corrections. All mathematical content was generated solely by the human authors.

\section{Preliminaries}

\subsection{Terminology}
We start this section with some notation which we will use later on.

For a standard Borel space $Y$, we denote by $\mathcal P(Y)$ the space of Borel probability measures  on $Y$, and by $\mathcal P_{\leq k}(Y)$ (respectively, $\mathcal P_{\geq k}(Y)$) the space  of Borel probability measures  on $Y$ whose support has at most (respectively, at least) $k$ points. If $Y$ is countable,  we identify $\mathcal P(Y)$ with $\ell^1(Y)_{+,1}:=\{f\in\ell^1(Y)\mid  f\geq 0,\|f\|_1=1\}$.  For $\eta\in\mathcal P(Y)$, we denote by $\text{supp}(\eta)$  and $\|\eta\|$ its support and norm.
We denote by $\mathcal F(Y)$ the collection of nonempty finite subsets of $Y$.

Let $Y$ be a standard Borel space. An equivalence relation $\mathcal R\subset Y\times Y$ is called a {\it countable Borel equivalence relation} if $\mathcal R$ is a Borel subset of $Y\times Y$ and 
the $\mathcal R$-equivalence class of every $y\in Y$ is countable.
 If $G\curvearrowright Y$ is a Borel action of a countable group $G$, then the {\it orbit equivalence relation} $\mathcal R(G\curvearrowright Y)=\{(y_1,y_2)\in Y\times Y\mid G\cdot y_1=G\cdot y_2\}$ is a countable Borel equivalence relation.

Let $G,H$ be countable groups. Let $(X,\mu)$ be a probability space,  which we always assume to be standard,  i.e.,  $X$ is a standard Borel space and $\mu\in\mathcal P(X)$.
 Let $G\curvearrowright (X,\mu)$ be a probability measure preserving ({\it pmp}) action. A Borel map $c:G\times X\rightarrow H$ is called a {\it cocycle} if it satisfies $c(g_1g_2,x)=c(g_1,g_2x)c(g_2,x)$, for every $g_1,g_2\in G$ and a.e. $x\in X$. 
 We say that $c$ is {\it cohomologous} to a cocycle $c':G\times X\rightarrow H$ if there exists a Borel map $\varphi:X\rightarrow H$ such that $c'(g,x)=\varphi(gx)c(g,x)\varphi(x)^{-1}$, for every $g\in G$ and a.e. $x\in X$. 

\subsection{Measure equivalence}
Two countable groups $G, H$ are  {\it measure equivalent (ME)} \cite{Gr} if there exists a standard infinite measure space $(\Omega,m)$ and a measure preserving action $G\times H\curvearrowright (\Omega,m)$ such that both the $G$- and $H$-actions admit a finite measure (measurable) fundamental domain. Any such space $(\Omega,m)$ is called an {\it ME-coupling} of $G$ and $H$.

Let $Y,X\subset\Omega$ be fundamental domains for the actions of $G$ and $H$, respectively. Define $\nu=m(Y)^{-1}m_{|Y}\in\mathcal P(Y)$ and $\mu=m(X)^{-1}m_{|X}\in\mathcal P(X)$.
Then we have pmp actions $G\curvearrowright (X,\mu)$ and $H\curvearrowright (Y,\nu)$ given by $\{g\cdot x\}=gH x\cap X$ and $\{h\cdot y\}=hG y\cap Y$. Moreover, we have cocycles $\alpha:G\times X\rightarrow H$ and $\beta:H\times Y\rightarrow G$ given by $g\cdot x=g\alpha(g,x)x$ and $h\cdot y=h\beta(h,y)y$.  Cocycles arising this way are called {\it $ME$-cocycles} \cite{Fu}.

\subsection{Essentially trivial, amenable and elementary cocycles}

In the next section we will prove a rigidity result for cocycles with values into hyperbolic groups (Theorem \ref{CR}) and derive a corollary (Corollary \ref{CR2}) needed in the proof of  our main result. In preparation for these results, we introduce the following terminology here.

\begin{definition}\label{cocycles}
Let $G,H$ be  countable groups, $G\curvearrowright (X,\mu)$ a pmp action and $c:G\times X\rightarrow H$  a cocycle.

 \begin{enumerate}
\item  If $S$ is a set endowed with an action of $H$, a map $\varphi:X\rightarrow S$ is called {\it $c$-equivariant} if it satisfies that $\varphi(gx)=c(g,x)\varphi(x)$, for every $g\in G$ and a.e. $x\in  X$.
\item We say that $c$ is 
{\it essentially trivial} if there is a $c$-equivariant Borel map $\varphi:X\rightarrow\mathcal F(H)$.
\item If $H$ is a hyperbolic group and $\partial H$ denotes its Gromov boundary,  then we say that $c$ is  {\it elementary} if there is a $c$-equivariant Borel map $\varphi:X\rightarrow \mathcal P_{\leq 2}(\partial H)$.
\item   We say that  $c$ is  {\it amenable} if there exist Borel maps $\varphi_n:X\rightarrow\mathcal P(H)$, $n\in\mathbb N$, such that $\|\varphi_n(gx)-c(g,x)\varphi_n(x)\|_1\rightarrow 0$, for every $g\in G$ and a.e. $x\in X$.
\end{enumerate}

In (2)-(4), we consider the natural actions of $H$ on $\mathcal F(H), \mathcal P(H)$, and $\mathcal P_{\leq 2}(\partial H)$, respectively.

\end{definition}
 We note that the above  notion of an elementary cocycle is a variation of (but different from) the notion introduced in \cite[Definition 1.1]{MS}.  Thus, $c$ is elementary in the sense of \cite[Definition 1.1]{MS} if and only if it essentially trivial or elementary in the sense of our Definition \ref{cocycles}.
 The notion of an amenable cocycle is inspired by \cite{AD} and the proof of \cite[Theorem 4.1]{BDV}. If $H$ is amenable, then any cocycle $c:G\times X\rightarrow H$ is amenable.
 
 In Corollary \ref{amenelem} we clarify the relationship between the above notions if $H$ is hyperbolic. Thus, we show that any elementary cocycle is amenable, and, conversely, any amenable cocycle whose restriction to any $G$-invariant set is not essentially trivial must be elementary.

We next record a few basic facts concerning the above notions.
\begin{proposition}\label{ess_triv_amenable}  
Let $G,H$ be  countable groups, $G\curvearrowright (X,\mu)$ a pmp action and $c:G\times X\rightarrow H$  a cocycle.
Then the following hold:

\begin{enumerate} \item $c:G\times X\rightarrow H$ is essentially trivial if and only if there exists a $c$-equivariant Borel map $\varphi:X\rightarrow\mathcal P(H)$. 
In particular, if $c$ is essentially trivial then it is amenable.
\item If $\{G_k\}_{k\geq 1}$ is an increasing sequence of subgroups of $G$ with  $G=\cup_{k\geq 1}G_k$, then a cocycle $c:G\times X\rightarrow H$ is amenable if and only if $c_{|(G_k\times X)}$ is amenable for all $k\geq 1$.
\item If $G$ has property (T) and $c$ is amenable, then $c$ is essentially trivial.
\item Let $\{H_k\}_{k\geq 1}$ be an increasing sequence of subgroups of $H$ with $H=\cup_{k\geq 1}H_k$. Assume that   $H=H_k\times H_k'$ for some subgroup $H_k'$ of $H$, and let $p_k:H\rightarrow H_k$ be the quotient homomorphism, for every $k\geq 1$. If the cocycle $p_k\circ c:G\times X\rightarrow H_k$ is amenable,  for every $k\geq 1$, then $c$ is amenable.
\item Assume that $c$ is an ME-cocycle arising from an ME-coupling of $G$ and $H$. If $c_{|(G_0\times X)}$ is an amenable cocycle, for some subgroup $G_0<G$, then $G_0$ is amenable.
\item Assume that $c$ is an ME-cocycle arising from an ME-coupling of $G$ and $H$. Let $N\lhd H$ be a normal subgroup and denote by $p:H\rightarrow H/N$ the quotient homomorphism. If $p\circ c:G\times X\rightarrow H/N$ is an amenable cocycle, then the group $H/N$ is amenable.
\item Let $L$ be a countable group with $H\leq L$. 
Then $c$ is essentially trivial as an $H$-valued cocycle if and only if $c$ is essentially trivial as an $L$-valued cocycle.
Likewise, $c$ is amenable as an $H$-valued cocycle if and only if $c$ is amenable as an $L$-valued cocycle.
\end{enumerate}

\end{proposition}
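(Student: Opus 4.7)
I would tackle the six assertions in the stated order, dispatching the largely independent pieces.

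For (1), the forward direction is immediate: given $c$-equivariant $\varphi:X\to\mathcal F(H)$ (nonempty-valued), the uniform measure $|\varphi(x)|^{-1}\sum_{h\in\varphi(x)}\delta_h$ defines a $c$-equivariant map into $\mathcal P(H)$. For the converse, given $c$-equivariant $\psi:X\to\mathcal P(H)$, note that $\psi(x)\in\ell^1(H)_{+,1}$ attains its maximum $m(x):=\max_{h\in H}\psi(x)(h)$ on a finite, nonempty set $F(x):=\{h:\psi(x)(h)=m(x)\}\in\mathcal F(H)$; both $m$ and $F$ are equivariant under the $H$-action (which permutes atoms), so $F:X\to\mathcal F(H)$ witnesses essential triviality of $c$. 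Taking $\varphi_n\equiv\psi$ then gives the last sentence of (1). For (2), $(\Rightarrow)$ is trivial. For $(\Leftarrow)$, enumerate $G=\{g_i\}$ and apply a diagonal/Borel--Cantelli argument: given witnesses $(\varphi^{(k)}_n)_n$ for each $G_k$, choose $n_k$ so that $\|\varphi^{(k)}_{n_k}(g_ix)-c(g_i,x)\varphi^{(k)}_{n_k}(x)\|_1<2^{-k}$ off a set of $\mu$-measure $2^{-k}$ for all $i\le k$; then $\varphi_k:=\varphi^{(k)}_{n_k}$ witnesses amenability of $c$.

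For (3), transfer $\ell^1$-approximate equivariance to $\ell^2$ via the Powers--Stormer inequality $\|\xi-\eta\|_2^2\le\|\xi^2-\eta^2\|_1$: setting $\xi_n(x)(h):=\varphi_n(x)(h)^{1/2}$ yields unit vectors $\xi_n\in L^2(X,\ell^2(H))$ with
\[\int_X\|\xi_n(gx)-c(g,x)\xi_n(x)\|_2^2\,d\mu(x)\longrightarrow 0\qquad(g\in G),\]
so $(\xi_n)$ is a sequence of almost-invariant unit vectors for the unitary representation $(U_gf)(x):=c(g,g^{-1}x)f(g^{-1}x)$ of $G$ on $L^2(X,\ell^2(H))$. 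Property (T) supplies a nonzero $G$-invariant vector. Letting $V$ be the closed subspace of $G$-invariant vectors and $E:=\bigcup_{v\in V}\{v\neq 0\}$, a short maximality argument (if $\mu(X\setminus E)>0$, restrict $\xi_n$ to $X\setminus E$ and invoke property (T) again to produce a new invariant vector supported in $X\setminus E$, contradicting maximality of $E$) shows $E$ is $\mu$-conull. Taking a countable dense $\{v_i\}\subset V$, the map $w(x):=\sum_i 2^{-i}|v_i(x)|^2/(\|v_i\|_2^2+1)\in\ell^1(H)_+$ is $c$-equivariant (since $H$ acts on $\ell^2(H)$ by coordinate permutation, $|c(g,x)v|^2=c(g,x)|v|^2$) and strictly positive in norm on $E$; normalizing yields a $c$-equivariant $\psi:X\to\mathcal P(H)$, and (1) concludes.

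For (4), by the diagonalization in (2) it suffices to produce, for each finite $F\subset G$ and $\epsilon>0$, a single $\varphi:X\to\mathcal P(H)$ with $\|\varphi(gx)-c(g,x)\varphi(x)\|_1<\epsilon$ for every $g\in F$ on a set of measure $\ge 1-\epsilon$. Since $H=\bigcup_k H_k$ and $F$ is finite, for $k$ large the measurable set $X_k:=\{x:c(g,x)\in H_k\text{ for all }g\in F\}$ has $\mu(X_k)>1-\epsilon/2$; on $X_k$ one has $q_k(c(g,x))=e$, where $q_k:H\to H_k'$ is the projection complementary to $p_k$. Amenability of $p_k\circ c$ supplies $\varphi:X\to\mathcal P(H_k)$, which we view as a measure on $H$ supported in $H_k\times\{e\}$, with $\|\varphi(gx)-p_k(c(g,x))\varphi(x)\|_1<\epsilon/2$ for $g\in F$ off a set of measure $\epsilon/2$. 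For $x\in X_k$, the left action of $c(g,x)=(p_k(c(g,x)),e)$ on a measure supported in $H_k\times\{e\}$ coincides with that of $p_k(c(g,x))$ on $\mathcal P(H_k)$, so the two estimates combine to give the required inequality on a set of measure $\ge 1-\epsilon$.

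Parts (5) and (6) are the technical heart, and I expect them to be the main obstacle, as they require exploiting the ME-structure to convert cocycle amenability into an invariant mean. Under the identification $\Omega\cong X\times H$ supplied by the $H$-fundamental domain $X$ (with $H$ acting by left multiplication on the second coordinate and $G$ acting by $g\cdot(x,h)=(gx,hc(g,x)^{-1})$), a witness $\varphi_n:X\to\mathcal P(H)$ for amenability of $c|_{G_0\times X}$ can be pushed through the ME-coupling to produce approximately $G_0$-invariant probability measures on a (possibly infinite-measure) $G_0$-fundamental domain of $\Omega$; a weak-$*$ limit then yields a $G_0$-invariant mean on $\ell^\infty(G_0)$, so $G_0$ is amenable, proving (5). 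For (6), since $N\lhd H$ commutes with the $G$-action, the quotient $(\Omega/N,\bar m)$ is itself an ME-coupling of $G$ and $H/N$: the image $\bar X$ of $X$ remains an $H/N$-fundamental domain of the same finite measure (as $X$ meets each $N$-orbit in at most one point), and any $G$-fundamental domain $Y\subset\Omega$ descends (after quotienting by the restriction of the $N$-equivalence relation) to a finite-measure $G$-fundamental domain $\bar Y\subset\Omega/N$. The associated ME-cocycle is precisely $p\circ c$, and the standard correspondence between amenability of an ME-cocycle and amenability of its inverse cocycle on the swapped fundamental domain lets us apply (5) with source and target interchanged to conclude that $H/N$ is amenable.
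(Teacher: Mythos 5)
Parts (1)--(4) of your proposal are fine: the paper leaves (1)--(3) to the reader, and your arguments for them (uniform measure on the finite set, maximal-weight atoms, Powers--St{\o}rmer plus property (T) plus the exhaustion argument, the diagonal/Borel--Cantelli device) are correct and standard; your (4) is essentially the paper's proof, with the paper's ``$p_k(h)=h$ for large $k$'' replaced by your quantitative set $X_k$. Your sketch of (5) is also, in outline, exactly the paper's argument: transport $\varphi_n$ to approximately $G_0$-invariant densities $\eta_n\in\mathrm{L}^1(\Omega,m)$ via the identification $\Omega\cong X\times H$, then push down through a $G_0$-fundamental domain $Z$ (setting $\zeta_n(g_0)=\int_{g_0Z}\eta_n\,\mathrm{d}m$) to obtain a Reiter sequence in $\ell^1(G_0)$.

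The genuine gap is in (6). Your reduction rests on the claim that $(\Omega/N,\bar m)$ is an ME-coupling of $G$ and $H/N$, and in particular that a $G$-fundamental domain $Y\subset\Omega$ descends to a finite-measure $G$-fundamental domain in $\Omega/N$. This is false: the $G$-translates of the image of $Y$ need not be disjoint, and in general the $G$-action on $\Omega/N$ admits no fundamental domain at all. A counterexample satisfying the hypotheses of (6): take $G=H=\mathbb F_2\times\mathbb Z$ with the tautological coupling $\Omega=H$ (counting measure, $G$ acting on the left, $H$ on the right) and $N=\mathbb F_2\times\{0\}$. Then $p\circ c$ is amenable because its target $H/N\cong\mathbb Z$ is amenable, yet $\Omega/N\cong\mathbb Z$ is not an ME-coupling of $G$ and $\mathbb Z$ (amenability is an ME-invariant, and $\mathbb F_2\times\mathbb Z$ is not amenable); concretely, $N$ lies in the kernel of the $G$-action on $\Omega/N$, so translates of any nonempty set by distinct elements of $N$ coincide and no fundamental domain exists. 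The subsequent appeal to a ``standard correspondence between amenability of an ME-cocycle and amenability of its inverse cocycle'' is also not available: for ME-cocycles this equivalence holds only because both are equivalent to amenability of the groups involved, which is exactly the content of (5) and (6), so invoking it is circular. The paper proves (6) directly within the original coupling: after translating $Y$ so that $Z=X\cap Y$ is non-negligible, one checks that for a.e.\ $z\in Z$ with $h\cdot z\in Z$ there is $g\in G$ with $h=c(g,z)$ and $h\cdot z=g\cdot z$, so the maps $\xi_n$ are asymptotically $p(h)$-equivariant on $Z$; one then spreads them over $T=H\cdot Z\subset Y$ by translation and averages against the finite invariant measure $\nu$ on $T$ to get $\zeta_n\in\mathcal P(H/N)$ with $\|\zeta_n-p(h)\zeta_n\|_1\to 0$ for all $h\in H$, i.e.\ a Reiter sequence for $H/N$. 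You would need an argument of this kind in place of your quotient-coupling reduction.
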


\begin{proof} The proofs of parts (1)-(3) are standard and are left to the reader.

(4) Assume that $p_k\circ c$ is amenable for every $k\geq 1$. For $k\geq 1$, view $\mathcal P(H_k)\subset\mathcal P(H)$ using the embedding $H_k<H$. 
Since $p_k\circ c$ is amenable, we can find a sequence of Borel maps $\xi_{n,k}:X\rightarrow\mathcal P(H)$ such that $\|\xi_{n,k}(g x)-p_k(c(g,x))\cdot\xi_{n,k}(x)\|_1\rightarrow 0$, as $n\rightarrow\infty$, for every $g\in G$ and a.e. $x\in X$. Thus, we can find $n_k\geq 1$ such that $\eta_k:=\xi_{n_k,k}:X\rightarrow \mathcal P(H)$ satisfies $\|\eta_k(g x)-p_k(c(g,x))\eta_k(x)\|_1\rightarrow 0$, as $k\rightarrow\infty$, for every $g\in G$ and a.e. $x\in X$. Since for every $h\in H$ we have that $p_k(h)=h$, for large enough $k\geq 1$, we conclude that $\|\eta_k(gx)-c(g,x)\eta_k(x)\|_1\rightarrow 0$, for every $g\in G$ and a.e. $x\in X$. Hence, $c$ is amenable.

(5)  Assume that $c$ is an ME-cocycle. Thus, there exists an ME-coupling $(\Omega,m)$ of $G$ and $H$ such that we can view $X\subset\Omega$ in such a way that $X$ is a fundamental domain for the action of $H$ and $c$ is given by $g\cdot x=gc(g,x)x$, for every $g\in G$ and $x\in X$. Assuming that $c|(G_0\times X)$ is an amenable cocycle, we can find a sequence of Borel maps $\xi_n:X\rightarrow \mathcal P(H)$ such that $\|\xi_n(g\cdot x)-c(g,x)\xi_n(x)\|_1\rightarrow 0$, for every $g\in G_0$ and a.e. $x\in X$. For every $n\geq 1$, we define a Borel function $\eta_n\in\text{L}^1(\Omega,m)_{+,1}$ by letting $\eta_n(x)=\xi_n(hx)(h)$, for every $x\in \Omega$ and $h\in H$ such that $hx\in X$. A direct calculation shows that if $g\in G$, then $$\int_\Omega|\eta_n(gx)-\eta_n(x)|\;\text{d}m(x)=\int_X\|\xi_n(g\cdot x)-c(g,x)\xi_n(x)\|_1\;\text{d}m(x).$$
From this we deduce that $\|g\cdot\eta_n-\eta_n\|_{\text{L}^1(\Omega,m)}\rightarrow 0$, for every $g\in G_0$. Since the action of $G$ on $\Omega$ has a measurable fundamental domain, the action of $G_0$ also has a measurable fundamental domain, say $Z$. If we define $\zeta_n\in\ell^1(G_0)_{+,1}=\mathcal P(G_0)$ by letting 
$\zeta_n(g_0)=\int_{g_0Z}\eta_n\;\text{d}m$, then it follows that $\|g\cdot\zeta_n-\zeta_n\|_1\rightarrow 0$, for every $g\in G_0$. This proves that $G_0$ is amenable.

(6) Assume that $c$ is an ME-cocycle  and keep the notation from the proof of (5). Let $Y\subset\Omega$ be a fundamental domain for the action of $G$ and consider the associated action of $H$ on $(Y,\nu)$, where $\nu=m(Y)^{-1}m_{|Y}$.  After possibly replacing $Y$ by $gY$, for some $g\in G$, we may assume that $Z:=X\cap Y$ is non-negligible.
Assume that $p\circ c:G\times X\rightarrow H/N$ is amenable.
Thus, there exists a sequence of Borel maps $\xi_n:X\rightarrow\mathcal P(H/N)$ such that $\|\xi_n(g\cdot x)-p(c(g,x))\xi_n(x)\|_1\rightarrow 0$, for every $g\in G$ and a.e. $x\in X$.

 Let $z\in Z\subset Y$ and $h\in H$ such that $h\cdot z\in Z$. Then $h\cdot z=gh z$, for some $g\in G$.
Since $z,g h z\in Z\subset X$, we get that $h=c(g,z)$ and that $h\cdot z=g\cdot z$. From this we deduce that $\|\xi_n(h\cdot z)-p(h)\xi_n(z)\|_1=\|\xi_n(g\cdot z)-p(c(g,z))\xi_n(z)\|_1\rightarrow 0$, for all $h\in H$ and almost every $z\in Z$ such that $h\cdot z\in Z$. Enumerate $H=\{h_k\}_{k\geq 1}$, put $T= H\cdot  Z\subset Y$ and for every $n\geq 1$, define a Borel map $\eta_n:T\rightarrow \mathcal P(H/N)$ by letting $\eta_n(y)=p(h_k)\xi_n(h_k^{-1}\cdot y)$, where $k\geq 1$ is the smallest integer with $h_k\cdot y\in Z$. It is easy to see that $\|\eta_n(h\cdot y)-p(h)\cdot\eta_n(y)\|_1\rightarrow 0$, for every $h\in H$ and a.e. $y\in T$. 

But then the elements $\zeta_n\in\mathcal P(H/N)$ given by $\zeta_n=\nu(T)^{-1}\int_{T}\eta_n(y)\;\text{d}\nu(y)$
satisfy that $\|\zeta_n-p(h)\zeta_n\|_1\rightarrow 0$, for every $h\in H$. This proves that $H/N$ is amenable.

(7) It is clear that if $c$ is either essentially trivial or amenable as an $H$-valued cocycle, then $c$ also has the corresponding property when viewed as an $L$-valued cocycle.  

Let $T\subseteq L$ be a transversal for the right cosets of $H$ inside $L$ with $e\in T$, and let $p:L\to H$ be given by $p(ht)=h$ for $h\in H$, $t\in T$, so that $p$ is equivariant for the left translation actions of $H$.
If $\varphi : X\to\mathcal{F}(L)$ is map witnessing that $c$ is essentially trivial when viewed as an $L$-valued cocycle, then $H$-equivariance of $p$ (and the fact that $c$ takes values in $H$) ensures that the map $\varphi '(x)\coloneqq p(\varphi (x))$ is a witness to $c$ being essentially trivial as an $H$-valued cocycle.

The map $p$ induces an $H$-equivariant contractive linear map $p_* : \ell ^1(L)\to \ell ^1(H)$, given by $(p_*f)(h)\coloneqq \sum_{\ell \in p^{-1}(h)}f(\ell )$, taking $\mathcal{M}(L)$ to $\mathcal{M}(H)$.  
Thus, if the sequence $(\psi _n)_{n\in \N}$ witnesses amenability of $c$ as an $L$-valued cocycle, then the sequence $(\psi_n')_{n\in \N}$ of point-wise pushforwards $\psi_n'(x)\coloneqq p_*(\psi_n(x))$ witnesses amenability of $c$ as an $H$-valued cocycle.
\end{proof}

\subsection{Wreath-like product groups}\label{Wreath}  We continue by recalling from \cite{CIOS23a,CIOS23b} the notion of a wreath-like product of groups.

\begin{definition}
A group $G$ is called a \emph{wreath-like product} of two groups $A$ and $B$ if it is an extension of the form
$\{e\}\longrightarrow A^{(B)} \longrightarrow G \longrightarrow B\longrightarrow \{e\}$, 
where the action of $G$ on $A^{(B)}=\bigoplus_{b\in B}(A)_b$ by conjugation satisfies 
$g(A)_bg^{-1} = (A)_{\pi(g)b}$, for every $g\in G$ and $b\in B$, with $\pi:G\rightarrow B$ being the quotient homomorphism.  

\end{definition}
We denote by $\mathcal W\mathcal R(A,B)$ the family of all wreath-like products of $A$ and $B$.
The usual {\it wreath product} $A\wr B=A^{(B)}\rtimes B$ belongs to $\mathcal W\mathcal R(A,B)$. 
More precisely, $G\in\mathcal W(A,B)$ is (canonically) isomorphic to $A\wr B$ if and only if the extension used to define $G$ is split. 
The wreath-product $A\wr B$ does not have property (T) unless  $A$ has property (T) and $B$ is finite.
One of the main findings of \cite{CIOS23a,CIOS23b} is that there exists an abundance of wreath-like products with property (T). In particular, group theoretic Dehn filling was used to show that hyperbolic groups $G$ with property (T) admit many quotients which can be decomposed as wreath-like products and which inherit property (T) from $G$ (see \cite[Theorem 2.7]{CIOS23b}).

In the proof of Theorem \ref{A} we will need the following result.

\begin{proposition}\label{WR}
For every $n\in\mathbb N$,  there exists a 
property (T) group $V\in\mathcal W\mathcal R(\mathbb F_n^{(\mathbb N)},D)$, where $D$ is an ICC nonamenable normal subgroup of a torsion-free hyperbolic group.

Moreover, if $\{A_k\}_{k=1}^\infty$ is a sequence of $n$-generated groups, then there exists a property (T) group belonging to $\mathcal W\mathcal R(\bigoplus_{k=1}^\infty A_k,D)$.
\end{proposition}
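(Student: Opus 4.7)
The first assertion is the special case of the moreover statement obtained by taking $A_k=\mathbb F_n$ for every $k\geq 1$, since then $\bigoplus_{k=1}^\infty A_k\cong\mathbb F_n^{(\mathbb N)}$. So I focus on proving the moreover statement.

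The plan is to invoke the group-theoretic Dehn filling framework developed by Chifan, Ioana, Osin, and Sun, specifically \cite[Theorem 2.7]{CIOS23b}. That theorem, together with the techniques in its proof, shows that a property (T) hyperbolic group $\hat G$ equipped with suitable malnormal quasi-convex subgroup data admits many Dehn filling quotients $V$ carrying a wreath-like product decomposition $V\in\mathcal{W}\mathcal{R}(A,B)$, and that these quotients automatically inherit property (T) from $\hat G$. I would select $\hat G$ to be torsion-free (so that its Dehn filling quotient $B$ remains torsion-free, and hence hyperbolic and ICC), and choose the subgroup data so that the resulting fiber $A$ accommodates the prescribed sequence $\{A_k\}_{k\geq 1}$.

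Concretely, since each $A_k$ is $n$-generated, it is a quotient of $\mathbb F_n$. I would encode this by imposing, during the Dehn filling, the defining relations of each $A_k$ on a corresponding $\mathbb F_n$-subgroup of $\hat G$, arranged so that in the quotient $V$ these pieces assemble into the single fiber $\bigoplus_k A_k$. The wreath-like action of $V$ then permutes the $b$-indexed copies $\bigl(\bigoplus_k A_k\bigr)_b$ through the quotient map $V\to B$, as required by the wreath-like product axioms.

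The hard part will be managing an infinite family of Dehn filling relations simultaneously (one set for each $A_k$) while preserving property (T), the torsion-free hyperbolic structure of $B$, and the precise direct sum fiber structure. The rotating-family machinery of \cite{CIOS23a, CIOS23b} is flexible enough to handle this, potentially via an inductive construction that adjoins one $A_k$ at each stage and then passes to an appropriate limit, using that property (T) and torsion-freeness are inherited by Dehn filling quotients when the filling parameters are chosen carefully. Verifying that the infinite direct sum structure emerges cleanly from this process, so that $V$ genuinely lies in $\mathcal{W}\mathcal{R}\bigl(\bigoplus_k A_k,B\bigr)$, is the technical heart of the argument.
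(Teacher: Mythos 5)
There is a genuine gap, and it sits exactly where you locate the ``technical heart.'' Your plan is to obtain the fiber $\bigoplus_k A_k$ directly from a Dehn filling of a property (T) hyperbolic group $\hat G$, with property (T) of $V$ inherited because $V$ is a quotient of $\hat G$. But the wreath-like quotients produced by the machinery of \cite{CIOS23b} (Theorem 2.7 / Corollary 4.18) have fiber equal to the image of the Cohen--Lyndon subgroup $H$, which is quasi-convex in a hyperbolic group and hence finitely generated; an infinite direct sum such as $\mathbb F_n^{(\mathbb N)}$ or $\bigoplus_{k=1}^\infty A_k$ (with infinitely many nontrivial factors) can never arise this way from a single filling. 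In the paper the infinitely generated fiber is manufactured by passing to an \emph{infinite-index} subgroup: one fills $G$ to get $W\in\mathcal W\mathcal R(H,G/\ll H\rr)$ with $H\cong\mathbb F_n$, and then takes $V=\pi(G_0)$ for a normal subgroup $G_0<G$ of infinite index, so that \cite[Lemma 2.8]{CIOS23a} upgrades the fiber to $H^{(I)}\cong\mathbb F_n^{(\mathbb N)}$. Once you do this, property (T) can no longer come from the ambient group -- it does not pass to infinite-index subgroups, and in fact the group $G$ used in the paper surjects onto $S\times\mathbb Z$, hence onto $\mathbb Z$, and has no property (T) at all. Instead, (T) for $V$ is arranged in advance via \cite[Lemma 2.10]{CIOS23a}: $G_0$ sits in an extension $1\to N\to G_0\to S\to 1$ with both $N$ and $S$ having property (T). Your proposal contains no substitute for this mechanism, and the claim that (T) is ``automatically inherited from $\hat G$'' does not apply to the group you actually need.

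A second, smaller issue is the mechanism for encoding the groups $A_k$. Imposing their defining relations ``during the filling'' is not how the framework works: the filling kills commutators of distinct conjugates of $H$, and different $B$-indexed copies of the fiber are permuted transitively, so you cannot assign different relation sets to different conjugates; moreover handling infinitely many filling subgroups (or an infinite iterated limit of fillings) while keeping $B$ hyperbolic and the fiber a genuine direct sum is precisely what you defer without argument. The paper sidesteps all of this: it first builds the (T) group $V\in\mathcal W\mathcal R(\mathbb F_n^{(\mathbb N)},B)$ with free fiber, and then, since each $A_k$ is $n$-generated, quotients the fiber along $\mathbb F_n^{(\mathbb N)}\twoheadrightarrow\bigoplus_k A_k$ using \cite[Lemma 2.12]{CIOS23a}; property (T) passes to this quotient trivially. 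So the logical order is the reverse of yours (main assertion first, moreover by quotienting the fiber), and your reduction of the main assertion to the moreover, while formally valid, leaves the moreover itself unproved.
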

This result is a variation of  \cite[Proposition 2.11]{CIOS23a} which established the case $n=1$.

\begin{proof}
As in the proof of \cite[Proposition 2.11]{CIOS23a}, let $S$ be any finitely presented, residually finite, torsion-free group with property (T). 
Fix also a nontrivial finitely presented torsion-free group $L$.
Applying \cite[Lemma 2.10]{CIOS23a}, there exists a short exact sequence $\{ e\}\to N\to G\stackrel{\gamma}\to S\times L\to \{ e\}$, where $G$ is torsion-free hyperbolic and $N$ is a non-trivial normal subgroup of $G$ with property (T).

Since $G_0:=\gamma^{-1}(S\times\{1\})$ is a normal subgroup of $G$ and $K(G)=\{1\}$ (as $G$ is torsion-free and $K(G)$ is the maximal finite normal subgroup of $G$), \cite[Lemma 3.22]{CIOS23b} implies that $G_0$ is a suitable subgroup of $G$, in the sense of \cite[Definition 3.21]{CIOS23b}. Applying \cite[Proposition 4.21]{CIOS23b} provides a Cohen-Lyndon subgroup $H\cong\mathbb F_n$ of $G$ such that $H$ is contained in $G_0$ and $B\coloneqq G/\ll H\rr$ is an ICC hyperbolic group, where  $\ll H\rr\lhd G$ denotes the normal subgroup generated by $H$. Since $G$ is torsion-free, we may also assume that  $B$ is torsion-free.

Since $G_0\lhd G$, we have $\ll H\rr\lhd G_0$.
Let $T=\{[sHs^{-1},tHt^{-1}\mid s\ll H\rr\not=t\ll H\rr\}$. By \cite[Corollary 4.18]{CIOS23b}, we get $W\coloneqq G/\langle T\rangle\in\mathcal W\mathcal R(H,G/\ll H\rr)$.

Let $\pi:G\rightarrow W$ be the quotient homomorphism, and define $V\coloneqq\pi(G_0)=G_0/T$ and $D\coloneqq G_0/\ll H\rr$. If $\rho:W\rightarrow B$ is the quotient homomorphism, then $V=\rho^{-1}(D)$. Thus, by \cite[Lemma 2.8]{CIOS23a} we get $V\in\mathcal W\mathcal R(H^{(I)},D)$, where $I$ is a set of cardinality equal to $[G:G_0]$.
Since $[G:G_0]=[S\times L:S\times\{1\}]=\infty$, we deduce that $H^{(I)}\cong \mathbb F_n^{(\mathbb N)}$.

Since $G_0$ sits in a short exact sequence $\{e\}\to N\to G_0\stackrel{\gamma}\to S\to \{e\},$ and $N$ and $S$ have property (T), we find that $G_0$ has property (T). Therefore, $V=\pi(G_0)$ has property (T). Since $D$ is a nontrivial normal subgroup of the torsion-free, ICC hyperbolic group $B$, and $H\cong\mathbb F_n$, it follows that $V$ is a group with the desired properties, proving the main assertion.

Since the groups $\{A_k\}_{k=1}^n$ are $n$-generated, their direct sum $\bigoplus_{k=1}^\infty A_k$ is a quotient of $\mathbb F_n^{(\mathbb N)}$. The moreover assertion then follows from \cite[Lemma 2.12]{CIOS23a}.
\end{proof}

\section{Cocycle rigidity}

The goal of this section is to prove the following theorem and its corollary needed in the proof of Theorem \ref{A}.

\begin{theorem}\label{CR}
Let $G_1,G_2,G$ be countable groups such that $G_1\times G_2\lhd G$ is a normal subgroup. Let $G\curvearrowright (X,\mu)$ be a pmp action and $c:G\times X\rightarrow H$ a cocycle, with $H$ a hyperbolic group. Then there is a measurable partition $X=X_1\sqcup X_2\sqcup Y$ such that $X_i$ is $(G_1\times G_2)$-invariant and $c|(G_i\times X)$ is essentially trivial, for $i\in \{1,2\}$, $Y$ is $G$-invariant and $c|(G\times Y)$ is elementary.\end{theorem}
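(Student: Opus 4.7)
The plan is to apply an Adams-type boundary dichotomy for cocycles into a hyperbolic group separately to each of the restrictions $c|(G_i\times X)$, $i\in\{1,2\}$, and then combine the two resulting decompositions, using that $G_1$ and $G_2$ commute and that $G_1\times G_2\lhd G$. Specifically, the standard dichotomy yields, for each $i$, a measurable partition $X=A_i\sqcup B_i$ such that $c|(G_i\times A_i)$ is essentially trivial and such that there exists a $c|(G_i\times B_i)$-equivariant Borel map $\phi_i\colon B_i\to\mathcal P_{\leq 2}(\partial H)$. Because each of $A_i,B_i$ admits an intrinsic description in terms of $c|(G_i\times X)$ alone, the centralizing subgroup $G_{3-i}$ preserves this partition, so $A_i$ and $B_i$ are both $(G_1\times G_2)$-invariant. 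I then set
\[
X_1:=A_1,\qquad X_2:=B_1\cap A_2,\qquad Y:=B_1\cap B_2,
\]
which is a $(G_1\times G_2)$-invariant measurable partition of $X$ for which $c|(G_i\times X_i)$ is essentially trivial, since $X_i\subseteq A_i$.

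The substance of the proof is to show that $Y$ is $G$-invariant and that $c|(G\times Y)$ is elementary. On $Y$ I have the two equivariant boundary maps $\phi_1,\phi_2$, and the first task is to reconcile them into a single $c|((G_1\times G_2)\times Y)$-equivariant Borel map $\Phi\colon Y\to\mathcal P_{\leq 2}(\partial H)$. Here $G_2$ acts on the space of $c|(G_1\times Y)$-equivariant Borel maps $Y\to\mathcal P_{\leq 2}(\partial H)$ via $(g_2\cdot\phi)(x):=c(g_2,g_2^{-1}x)\,\phi(g_2^{-1}x)$; the existence of $\phi_2$ constrains this action, and one uses the classification of commuting elementary subgroups of $H$ (any two commuting infinite virtually cyclic subgroups of a hyperbolic group share a common pair of limit points in $\partial H$) to force $\phi_1$ and $\phi_2$ to have compatible support structures, allowing $\Phi$ to be built. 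With $\Phi$ in hand, the normality $G_1\times G_2\lhd G$ is used to pass to a $c|(G\times Y)$-equivariant map: for $g\in G$, the translate $\Phi^{g}(x):=c(g,g^{-1}x)\,\Phi(g^{-1}x)$ is again $(G_1\times G_2)$-equivariant on $gY$ (since $g(G_1\times G_2)g^{-1}=G_1\times G_2$), so $gY=Y$ by the intrinsic characterization of $Y$, and a Borel selection over a transversal of $G/(G_1\times G_2)$ produces the desired $c|(G\times Y)$-equivariant map.

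The main obstacle will be the combining step: producing the $(G_1\times G_2)$-equivariant boundary map from $\phi_1$ and $\phi_2$. This requires careful use of the geometry of elementary subgroups of $H$. Stabilizers in $H$ of points of $\mathcal P_{\leq 2}(\partial H)$ are virtually cyclic, and the commutativity of the $G_1$- and $G_2$-actions must translate into a corresponding commutativity of the associated stabilizer subgroups of $H$. It is precisely this geometric rigidity in hyperbolic groups (together with a Borel measurable selection argument to get an actual map $\Phi$ rather than just a pointwise existence statement) that will drive the argument; the subsequent extension from $G_1\times G_2$ to $G$ via normality is then routine once the joint boundary map $\Phi$ is available.
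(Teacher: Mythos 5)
Your opening step is not available: there is no ``standard dichotomy'' asserting that an \emph{arbitrary} cocycle $c|(G_i\times X)$ of a single countable group into a hyperbolic group decomposes $X$ into a piece where it is essentially trivial and a piece where it admits an equivariant map to $\mathcal P_{\leq 2}(\partial H)$. This fails already for the simplest examples: take $G_i=H$ non-elementary hyperbolic, a free ergodic pmp action $G_i\curvearrowright(X,\mu)$, and the homomorphism cocycle $c(g,x)=g$. An essentially trivial restriction to a non-null invariant set would push $\mu$ to an $H$-invariant probability measure on the countable set $\mathcal F(H)$, forcing a finite $H$-orbit of a finite subset and hence $H$ finite; an elementary restriction would push $\mu$ to an $H$-invariant probability measure on $\mathcal P_{\leq 2}(\partial H)$, which does not exist for non-elementary $H$. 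The Adams-type boundary theory you want to invoke (and which the paper uses through its Corollary \ref{amenelem}) only says: \emph{if} the cocycle is amenable, then it splits into an elementary part and an essentially trivial part. Producing that amenability is exactly where the analytic content of the theorem lies, and it cannot be obtained factor-by-factor: the paper gets it from the Brothier--Deprez--Vaes theorem (Theorem \ref{BDV}), which uses property (S) of $H$ together with the \emph{product} structure of $G_1\times G_2$ to produce an asymmetric partition $X=Y_0\sqcup Y_2$ with $c|(G_1\times Y_0)$ amenable and $c|(G_2\times Y_2)$ essentially trivial. Your proposal skips precisely this step, so the partition $X=A_i\sqcup B_i$ and the maps $\phi_i$ you start from need not exist.

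A secondary issue: even granting boundary maps, your ``combining'' and ``extension to $G$'' steps are not routine as sketched. Having, for each $g\in G$, \emph{some} $(G_1\times G_2)$-equivariant map on $gY$ does not by itself yield a single $c|(G\times Y)$-equivariant map; one needs a canonical choice. The paper handles this with the Adams--Zimmer/Hjorth--Kechris maximal-measure argument (Proposition \ref{norm}): after arranging that the restricted cocycle is nowhere essentially trivial, every equivariant map lands in $\mathcal P_{\leq 2}(\partial H)$ and there is a distinguished maximal one, whose uniqueness of support forces equivariance under the full normalizer. That ``nowhere essentially trivial'' normalization (extracted via the paper's Lemma \ref{normalizer} and a maximality argument) is missing from your outline, and without it the selection over a transversal of $G/(G_1\times G_2)$ does not produce a coherent equivariant map. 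So the overall architecture should be: BDV product dichotomy $\Rightarrow$ amenability of $c|(G_1\times\cdot)$ on a piece, then amenable $\Rightarrow$ elementary on the part where it is nowhere essentially trivial, then pass elementarity up to $G_1\times G_2$ and to $G$ by the maximal boundary-map argument.
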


The statement of Theorem \ref{CR} is inspired by Popa and Vaes's powerful dichotomy theorem \cite{PV} for von Neumann algebras arising from actions of hyperbolic groups. While Theorem \ref{CR} can be proved using \cite{PV}, we give a more elementary, ergodic-theoretic approach relying on results from \cite{Ad94a,Ad95,Ad96,HK,BDV}.


\begin{corollary}\label{CR2}
Let $G\in\mathcal W\mathcal R(A,B)$, for countable groups $A,B$. Let $G\curvearrowright (X,\mu)$ be an ergodic pmp action and $c:G\times X\rightarrow H$ be a cocycle, where $H$ is a subgroup of a hyperbolic group. Then $c$ is amenable or $c|(A^{(B)}\times X)$ is essentially trivial.
\end{corollary}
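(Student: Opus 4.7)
The plan is to apply Theorem \ref{CR} to the pmp action $G\curvearrowright(X,\mu)$ using the normal subgroup $A^{(B)}\lhd G$. By the wreath-like structure, $A^{(B)}$ decomposes as an internal direct product $A^{(B)}=A_{b_0}\times A^{(B\setminus\{b_0\})}$ of commuting subgroups of $G$, for any fixed $b_0\in B$. Setting $G_1=A_{b_0}$ and $G_2=A^{(B\setminus\{b_0\})}$ in Theorem \ref{CR} yields a measurable partition $X=X_1\sqcup X_2\sqcup Y$ where $X_1,X_2$ are $A^{(B)}$-invariant, $Y$ is $G$-invariant, $c|(A_{b_0}\times X_1)$ and $c|(A^{(B\setminus\{b_0\})}\times X_2)$ are essentially trivial, and $c|(G\times Y)$ is elementary.

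Since the $G$-action on $X$ is ergodic and $Y$ is $G$-invariant, $Y$ has measure $0$ or $1$. If $\mu(Y)=1$, then $c$ is elementary on all of $X$; since every elementary cocycle into a hyperbolic group is amenable (as noted in the paragraph following Definition \ref{cocycles}), $c$ is amenable, giving the first alternative. Otherwise, $Y$ is null and $X=X_1\sqcup X_2$ modulo null sets. To derive essential triviality of $c|(A^{(B)}\times X)$ in this case, the key construction is the set $X_{\max}\subseteq X$ defined as the maximal $A^{(B)}$-invariant measurable subset on which $c|(A^{(B)})$ is essentially trivial; this is well-defined via the $\mathcal P(H)$-valued reformulation from Proposition \ref{ess_triv_amenable}(1), noting that two $A^{(B)}$-invariant sets carrying essentially trivial restrictions combine into a single such set by taking unions of supports in $\mathcal F(H)$. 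Using the cocycle-conjugation identity $c(gag^{-1},gx)=c(g,ax)c(a,x)c(g,x)^{-1}$ together with normality of $A^{(B)}$ in $G$, one checks that $gX_{\max}=X_{\max}$ for every $g\in G$, so $X_{\max}$ is $G$-invariant, and by ergodicity it is either all of $X$ or null.

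If $X_{\max}=X$ we are done. The main obstacle is the remaining subcase $X_{\max}$ null, in which one must deduce that $c$ is amenable. The plan here is to iterate Theorem \ref{CR}, now applied to the $A^{(B)}$-action on each of $X_1,X_2$ using the decompositions $A^{(B)}=A_b\times A^{(B\setminus\{b\})}$ for varying $b\in B$. Each such iteration either produces further $A^{(B)}$-invariant sub-pieces carrying essentially trivial restrictions of $c$ to various subgroups of $A^{(B)}$—which one wants to glue (via the claim that essential triviality of $c$ restricted to two commuting subgroups $K_1,K_2\leq A^{(B)}$ with $\langle K_1,K_2\rangle=A^{(B)}$ can be combined into an essentially trivial restriction to $A^{(B)}$) to contradict the nullity of $X_{\max}$—or produces an $A^{(B)}$-invariant piece on which $c|(A^{(B)})$ is elementary; in the latter case, uniqueness (rigidity) of the $\mathcal P_{\leq 2}(\partial H)$-valued equivariant map for cocycles into a hyperbolic group promotes the $A^{(B)}$-elementary piece to a $G$-elementary one after $G$-saturation, which by ergodicity equals $X$, so $c$ is elementary on $X$ and hence amenable. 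The technical heart lies in this gluing/rigidity step, and in confirming that the maximality and $G$-invariance of $X_{\max}$ interact correctly with the iterative application of Theorem \ref{CR}.
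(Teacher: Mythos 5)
Your opening moves match the paper's: apply Theorem \ref{CR} with the decomposition $A^{(B)}=A_{b_0}\times A^{(B\setminus\{b_0\})}$, dispose of the $G$-invariant elementary piece by ergodicity (elementary $\Rightarrow$ amenable), and observe that the maximal $A^{(B)}$-invariant set on which $c|(A^{(B)}\times\cdot)$ is essentially trivial is $G$-invariant, hence null or conull. The genuine gap is in the remaining case ($X_{\max}$ null). Your plan is to iterate Theorem \ref{CR} \emph{inside} $A^{(B)}$, using the splittings $A^{(B)}=A_b\times A^{(B\setminus\{b\})}$, and then to ``glue''. But each application of Theorem \ref{CR} delivers the two essentially trivial restrictions on \emph{disjoint} invariant pieces, while your gluing claim needs essential triviality of restrictions to two subgroups jointly generating $A^{(B)}$ on a \emph{common} non-null set; as described, the iteration never manufactures such a set (concretely: you never gain any control of $c|(A_{b_0}\times\cdot)$ on $X_2$, nor of $A^{(B\setminus\{b_0\})}$ on $X_1$ beyond what single applications of \ref{CR} give, and these always land on complementary pieces). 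The paper's way around this is precisely the ingredient your outline omits: conjugation by elements $g\in G\setminus A^{(B)}$, using the wreath-like relation $gA^{(B\setminus\{e\})}g^{-1}=A^{(B\setminus\{\pi(g)\})}$ together with $A_e\subset A^{(B\setminus\{\pi(g)\})}$, so that translating $X_2$ by such $g$ and invoking ergodicity of the full $G$-action yields that $c|(A_e\times X)$ is essentially trivial on \emph{all} of $X$, and then $c|(A_b\times X)$ for every $b$. Without bringing the ambient group $G$ into play at this stage, the argument does not close.

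Two further steps you lean on are themselves substantial and are not supplied. First, the gluing claim for commuting subgroups is not a matter of taking unions of supports (that device only works for combining invariant \emph{sets} for a fixed subgroup); it is the paper's Lemma \ref{product}, proved via a positivity estimate and a minimal-norm element in $L^2(X\times H)$. Second, even with gluing for all finite sub-sums $A^{(F)}$, essential triviality does \emph{not} pass to the increasing union $A^{(B)}=\bigcup_n A^{(F_n)}$; one only obtains amenability of $c|(A^{(B)}\times X)$ (Proposition \ref{ess_triv_amenable}), and the endgame then requires the dichotomy ``amenable and nowhere essentially trivial $\Rightarrow$ elementary'' (Corollary \ref{amenelem}) followed by Proposition \ref{norm} to promote elementarity from $A^{(B)}$ to $G$ and contradict non-amenability. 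Relatedly, your appeal to ``uniqueness of the $\mathcal P_{\leq 2}(\partial H)$-valued equivariant map'' is not valid in general: the promotion to the normalizer (Proposition \ref{norm}) needs the hypothesis that the restriction is nowhere essentially trivial (it does hold in your $X_{\max}$-null case, but this is exactly what must be said). As written, the proposal identifies the right starting point but leaves the technical heart --- the passage from the disjoint pieces of Theorem \ref{CR} to essential triviality of $c|(A_b\times X)$ on all of $X$, the commuting-subgroups gluing lemma, and the finite-to-infinite passage via amenability --- unproved, and the first of these would fail along the route you describe.
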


Corollary 
\ref{CR2} will be deduced at the end of the section from Theorem \ref{CR}.

In preparation for the proof of Theorem \ref{CR}, we first establish some additional basic facts concerning the notions in Definition \ref{cocycles}.

\begin{lemma}\label{normalizer}
Let $G<K,H$ be countable groups, $K\curvearrowright (X,\mu)$ a pmp action and $c:K\times X\rightarrow H$  a cocycle. Let $Y\subset X$ be a $G$-invariant measurable set. Define $Z=\cup_{k\in N_{K}(G)}kY$.
\begin{enumerate}
\item If $c|(G\times Y)$ is essentially trivial, then $c|(G\times Z)$ is essentially trivial.
\item If $H$ is hyperbolic and $c|(G\times Y)$ is elementary, then $c|(G\times Z)$ is elementary.
\item If $c|(G\times Y)$ is amenable, then $c|(G\times Z)$ is amenable.
\end{enumerate}
\end{lemma}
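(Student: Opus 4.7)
The plan is to build the extensions by an explicit measurable selection procedure using an enumeration of $N_K(G)$, and to exploit the fact that the $G$-action preserves the corresponding level sets.

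First I would check that $Z$ is in fact $G$-invariant. Given $z = k\cdot y$ with $k\in N_K(G)$, $y\in Y$, and $g\in G$, write $gz = k(k^{-1}gk)y$; since $k^{-1}gk\in G$ and $Y$ is $G$-invariant, $(k^{-1}gk)y\in Y$, so $gz\in kY\subset Z$. Next I would enumerate $N_K(G)=\{k_n\}_{n\ge 1}$ and, for each $z\in Z$, set $n(z):=\min\{n\ge 1:k_n^{-1}z\in Y\}$. This partitions $Z$ into the measurable sets $Z_n=\{z\in Z:n(z)=n\}$. The key observation is that each $Z_n$ is itself $G$-invariant: for $g\in G$ and $z\in Z_n$, $k_n^{-1}(gz)=(k_n^{-1}gk_n)(k_n^{-1}z)\in Y$ because $k_n^{-1}gk_n\in G$ stabilizes $Y$, so $n(gz)\le n$; the symmetric argument applied to $g^{-1}$ gives the reverse inequality.

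With this in hand, all three parts use the same extension formula. Given a map $\varphi:Y\to S$ (where $S$ is $\mathcal{F}(H)$, $\mathcal{P}_{\le 2}(\partial H)$, or $\mathcal{P}(H)$), define
\[
\tilde{\varphi}(z):=c(k_{n(z)},k_{n(z)}^{-1}z)\cdot\varphi(k_{n(z)}^{-1}z),\qquad z\in Z.
\]
To verify $c$-equivariance in cases (1) and (2), fix $z\in Z_n$ and $g\in G$ and set $y=k_n^{-1}z$, $y'=k_n^{-1}(gz)=(k_n^{-1}gk_n)y\in Y$. Using the cocycle identity,
\[
c(g,z)c(k_n,y)=c(gk_n,y)=c(k_n(k_n^{-1}gk_n),y)=c(k_n,y')\,c(k_n^{-1}gk_n,y),
\]
so
\[
c(g,z)\tilde{\varphi}(z)=c(k_n,y')\bigl[c(k_n^{-1}gk_n,y)\varphi(y)\bigr]=c(k_n,y')\varphi(y')=\tilde{\varphi}(gz),
\]
where the middle equality uses the $c$-equivariance of $\varphi$ on $G\times Y$ for the element $k_n^{-1}gk_n\in G$. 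This handles (1) and (2) since the $H$-actions on $\mathcal F(H)$ and $\mathcal P_{\le 2}(\partial H)$ are genuine actions.

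For (3), given approximately equivariant $\varphi_m:Y\to\mathcal P(H)$, define $\tilde{\varphi}_m$ by the same formula and repeat the computation to get
\[
\tilde{\varphi}_m(gz)-c(g,z)\tilde{\varphi}_m(z)=c(k_n,y')\cdot\bigl[\varphi_m(y')-c(k_n^{-1}gk_n,y)\varphi_m(y)\bigr].
\]
Since left translation on $\ell^1(H)$ is isometric, $\|\tilde{\varphi}_m(gz)-c(g,z)\tilde{\varphi}_m(z)\|_1=\|\varphi_m((k_n^{-1}gk_n)y)-c(k_n^{-1}gk_n,y)\varphi_m(y)\|_1$, which tends to $0$ as $m\to\infty$ for a.e.\ $y\in Y$ by amenability of $c|(G\times Y)$ applied to the element $k_n^{-1}gk_n\in G$. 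Summing over the partition $Z=\bigsqcup_n Z_n$ yields amenability on $Z$.

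The only subtle point is the $G$-invariance of the level sets $Z_n$, which is what allows the selection $z\mapsto k_{n(z)}$ to be compatible with the $G$-action; everything else is a bookkeeping exercise with the cocycle identity. I expect no essential obstacle beyond this verification.
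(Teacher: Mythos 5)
Your proof is correct and follows essentially the same route as the paper: the central computation is exactly the cocycle-conjugation identity $c(g,z)c(k,k^{-1}z)=c(k,k^{-1}gz)c(k^{-1}gk,k^{-1}z)$ that the paper isolates as its Claim, applied to elements of $N_K(G)$. The only difference is that you make explicit the pasting over the countable union $Z=\bigcup_n k_nY$ via the $G$-invariant level sets $Z_n$ of the first-return index $n(z)$, a step the paper leaves implicit, and this is carried out correctly (including the a.e.\ bookkeeping needed in part (3)).
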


\begin{proof}
Let $S$ be a standard Borel space  endowed with a Borel action of $H$. Let $\varphi:Y\rightarrow S$ be a $c|(G\times Y)$-equivariant Borel map.
For $k\in K$, define $\varphi_k:k^{-1}Y\rightarrow S$ by $\varphi_k(x)=c(k,x)^{-1}\varphi(kx)$. 

\begin{claim}\label{equiv}
$\varphi_k$ is a $c|(k^{-1}Gk\times k^{-1}Y)$-equivariant Borel map. 
\end{claim}
Indeed, for every $g\in k^{-1}Gk$ and a.e. $x\in k^{-1}Y$, we have $kx\in Y$ and $kgk^{-1}\in G$, hence \begin{align*}
\varphi_k(gx)=c(k,gx)^{-1}\varphi(kgk^{-1}kx)= & c(k,gx)^{-1}c(kgk^{-1},kx)\varphi(kx)\\= & c(k,gx)^{-1}c(kgk^{-1},kx)c(k,x)\varphi_k(x)\\= & c(g,x)\varphi_k(x).
\end{align*}

 Claim \ref{equiv} gives that if $k\in N_K(G)$, then $\varphi_k$ is $c|(G\times k^{-1}Y)$-equivariant. Thus, if $c|(G\times Y)$ is essentially trivial (respectively, elementary), then  so is $c|(G\times k^{-1}Y)$, for every $k\in N_K(G)$. This implies parts (1) and (2). If $S=\mathcal P(H)$, then the same calculation as in the proof of Claim \ref{equiv} shows that $\|\varphi_k(gx)-c(g,x)\varphi_k(x)\|_1=\|\varphi(kgk^{-1}kx)-c(kgk^{-1},kx)\varphi(kx)\|_1$, for every $g\in k^{-1}Gk$ and $x\in k^{-1}Y$. This gives that  if $c|(G\times Y)$ is amenable and $k\in N_K(G)$, then $c|(G\times k^{-1}Y)$ is amenable, which implies part (3).
\end{proof}

Next, we clarify the relationship between elementary and amenable cocycles, see Corollary \ref{amenelem}. To this end, we first prove the following result.

\begin{lemma}\label{invmeasure}
Let $G,H$ be  countable groups, $G\curvearrowright (X,\mu)$ a pmp action and $c:G\times X\rightarrow H$ be an amenable cocycle. Let $H\curvearrowright S$ be an action by homeomorphisms on a compact metric space $S$. Then there exists a $c$-equivariant Borel map $\eta:X\rightarrow \mathcal P(S)$.\end{lemma}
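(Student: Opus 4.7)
The strategy is to transport the almost-equivariant sequence $\varphi_n:X\to\mathcal{P}(H)$ coming from amenability of $c$ forward to $\mathcal{P}(S)$ via an orbit map, and then extract a weak-$\ast$ accumulation point to obtain exact equivariance in the limit.

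\emph{Pushing forward to $\mathcal{P}(S)$.} Fix any base point $s_0\in S$ and let $\pi\colon H\to S$, $\pi(h)=h\cdot s_0$, which is $H$-equivariant. Define Borel maps $\psi_n\colon X\to\mathcal{P}(S)$ by
\[
\psi_n(x):=\pi_\ast\varphi_n(x)=\sum_{h\in H}\varphi_n(x)(h)\,\delta_{h\cdot s_0}.
\]
Since pushforward is $H$-equivariant and does not increase total variation, and since on $\mathcal{P}(H)=\ell^1(H)_{+,1}$ one has $\|\cdot\|_{\mathrm{TV}}\le \|\cdot\|_1$, it follows that
\[
\|\psi_n(gx)-c(g,x)\cdot\psi_n(x)\|_{\mathrm{TV}}\le \|\varphi_n(gx)-c(g,x)\cdot\varphi_n(x)\|_1\longrightarrow 0
\]
for every $g\in G$ and a.e.\ $x\in X$, so $\psi_n$ inherits asymptotic $c$-equivariance.

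\emph{Weak-$\ast$ limit.} View each $\psi_n$ as a positive, normalized element of the closed unit ball of $L^1(X,C(S))^\ast$ via the pairing $F\mapsto \int_X\psi_n(x)(F(x))\,d\mu(x)$. Because $(X,\mu)$ is standard and $S$ is compact metric, $L^1(X,C(S))$ is separable, so this unit ball is weak-$\ast$ compact and metrizable by Banach--Alaoglu. Extract a subsequence $\psi_{n_k}\to\eta$ in weak-$\ast$. Positivity and normalization pass to weak-$\ast$ limits, and a weak-$\ast$ measurable $\mathcal{P}(S)$-valued function is the same datum as a Borel map $X\to\mathcal{P}(S)$ (using that a countable dense subset of $C(S)$ generates the Borel structure on $\mathcal{P}(S)$), so $\eta$ is represented by a Borel map $\eta\colon X\to\mathcal{P}(S)$.

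\emph{Equivariance in the limit.} Fix $g\in G$, $f\in C(S)$ and $h\in L^1(X,\mu)$. The asymptotic equivariance combined with dominated convergence (dominating function $2|h(x)|\,\|f\|_\infty$) gives
\[
\int_X h(x)\Bigl(\int_S f\,d\psi_n(gx)-\int_S f\,d(c(g,x)\psi_n(x))\Bigr)d\mu(x)\longrightarrow 0.
\]
Applying $g$-invariance of $\mu$ in the first summand and using that $c(g,\cdot)$ takes only countably many values in $H$ in the second, both summands are pairings of $\psi_n$ against elements of $L^1(X,C(S))$; passing to $n_k$ therefore yields
\[
\int_X h(x)\int_S f\,d\eta(gx)\,d\mu(x)=\int_X h(x)\int_S f\,d(c(g,x)\eta(x))\,d\mu(x).
\]
Varying $h$ over a countable dense subset of $L^1(X,\mu)$, $f$ over a countable dense subset of $C(S)$, and $g$ over $G$, and discarding a single $\mu$-null set, one concludes $\eta(gx)=c(g,x)\eta(x)$ for every $g\in G$ and a.e.\ $x\in X$, which is the required $c$-equivariance.

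\emph{Main obstacle.} The delicate point is the interplay between measurability and weak-$\ast$ compactness: one must guarantee that the abstract weak-$\ast$ limit corresponds to a genuine Borel map into $\mathcal{P}(S)$ (and not merely a universally measurable object), and that the a.e.\ asymptotic equivariance transfers cleanly through a subsequential weak-$\ast$ limit. Separability of $L^1(X,\mu)$ and $C(S)$ together with countability of $G$ are precisely what allow countably many exceptional null sets to be collapsed into one.
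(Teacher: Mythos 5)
Your proof is correct and follows essentially the same route as the paper: push the almost-equivariant maps $\varphi_n$ forward to $\mathcal P(S)$ via the orbit map $h\mapsto h\cdot s_0$, take a weak-$\ast$ limit of the associated states with $X$-marginal $\mu$, and verify exact $c$-equivariance in the limit by testing against countably many functions. The only difference is packaging: the paper realizes the limit as a $G$-invariant measure $\rho\in\mathcal P(X\times S)$ with $p_\ast\rho=\mu$ and then disintegrates, whereas you work directly in the unit ball of $L^1(X,C(S))^\ast$, which amounts to the same object.
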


The proof of this result is standard, but for lack of a reference, we include a detailed proof.

\begin{proof} Since $(X,\mu)$ is a standard probability space, we may assume that $X$ is a compact metric space.
Since $c$ is an amenable cocycle, we can find a sequence of Borel maps $\psi_n:X\rightarrow\mathcal P(H)$ such that $\|\psi_n(gx)-c(g,x)\psi_n(x)\|_1\rightarrow 0$, for every $g\in G$ and a.e. $x\in X$. Fix $s\in S$ and define $\pi:\mathcal P(H)\rightarrow \mathcal P(S)$ by letting $\pi(f)=\sum_{h\in H}f(h)\delta_{hs}$ for every $f\in\mathcal P(H)$. Then $\pi$ is a Borel $H$-equivariant map and $\|\pi(f)-\pi(g)\|=\|f-g\|_1$, for every $f,g\in\mathcal P(H)$. Thus, $\eta_n=\pi\circ\psi_n:X\rightarrow\mathcal P(S)$, $n\in\mathbb N$, are Borel maps such that $\|\eta_n(gx)-c(g,x)\eta_n(x)\|\rightarrow 0$, for every $g\in G$ and a.e. $x\in X$.

If  $n\in\mathbb N$ and $f:X\times S\rightarrow\mathbb C$ is a Borel function, then $ \int_Sf(x,t)\;\text{d}\eta_n(x)(t)=\sum_{h\in H}f(x,hs)\psi_n(x)(h)$, for every $x\in X$. This implies that $X\ni x\mapsto \int_Sf(x,t)\;\text{d}\eta_n(x)(t)\in\mathbb C$ is a Borel function, so we can define $\rho_n\in\mathcal P(X\times S)$ by $\rho_n:=\int_X(\delta_x\otimes\eta_n(x))\;\text{d}\mu(x)$.
Consider the Borel action $G\curvearrowright X\times S$ given by $g\cdot (x,t)=(gx,c(g,x)t)$. Then  $g\rho_n=\int_X(\delta_x\otimes c(g,g^{-1}x)\eta_n(g^{-1}x))\;\text{d}\mu(x)$. In combination with the dominated convergence theorem this implies that $$\text{$\|g\rho_n-\rho_n\|\leq\int_X \|c(g,g^{-1}x)\eta_n(g^{-1}x)-\eta_n(x)\|\;\text{d}\mu(x)\rightarrow 0$, for every $g\in G$.}$$
Therefore, any weak$^*$-limit point  $\rho\in\mathcal P(X\times S)$ of $\{\rho_n\}_{n\in\mathbb N}$ is $G$-invariant. Moreover, let $p:X\times S\rightarrow X$ be the projection $p(x,s)=x$. Then $p_*\rho_n=\mu$, for every $n\in\mathbb N$, and thus $p_*\rho=\mu$. Thus, we find a Borel map $\eta:X\rightarrow \mathcal P(S)$ such that $\rho=\int_X(\delta_x\otimes \eta(x))\;\text{d}\mu(x)$. Since  $\eta$ is unique up to measure zero and $g\rho=\int_X(\delta_x\otimes c(g,g^{-1}x)\eta(g^{-1}x))\;\text{d}\mu(x)$, we conclude that $\eta(x)=c(g,g^{-1}x)\eta(g^{-1}x)$, for all $g\in G$ and a.e. $x\in X$. This finishes the proof.
\end{proof}

An action of a countable group $H$ on a compact space $Y$ is called (topologically) {\it amenable} \cite{AD87} if there is a sequence of continuous maps $\eta_n:Y\rightarrow\mathcal P(H)$ such that for every $h\in H$ we have $\sup_{x\in Y}\|\eta_n(hx)-h\eta_n(x)\|_1\rightarrow 0$. 
Assume that $H\curvearrowright Y$ is an amenable action. Then the action $H\curvearrowright Y_0$ is  amenable for every closed $H$-invariant subset $Y_0\subset Y$.
 Moreover, the action $H\curvearrowright \mathcal P(Y)$ is also amenable,  where $\mathcal P(Y)$ is endowed with the weak$^*$-topology,
 as witnessed by the maps $\widetilde\eta_n:\mathcal P(Y)\ni \mu\mapsto \int_Y\eta_n(y)\;\text{d}\mu(y)\in\mathcal P(H)$. 
 
 By a result of Adams \cite[Theorem 5.1]{Ad94a} (see also \cite[Theorem 5.3.15]{BO}), for any hyperbolic group $H$,  the action $H\curvearrowright \partial H$ is amenable. Since $\mathcal P_{\leq 2}(\partial H)\subset\mathcal P(\partial H)$ is a closed $H$-invariant subset, we get that the actions $H\curvearrowright\mathcal P(\partial H)$ and  $H\curvearrowright \mathcal P_{\leq 2}(\partial H)$ are  also amenable for any hyperbolic group $H$.

\begin{corollary}\label{amenelem}
Let $G,H$ be countable groups, with $H$ hyperbolic, $G\curvearrowright (X,\mu)$ a pmp action and $c:G\times X\rightarrow H$ a cocycle. Then $c$ is amenable if and only if there is a $G$-invariant measurable set $Y\subset X$ such that $c|(G\times Y)$ is elementary and $c|(G\times (X\setminus Y))$ is essentially trivial.
\end{corollary}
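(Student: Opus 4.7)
The plan is to prove each direction separately.

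$(\Leftarrow)$ Given the $G$-invariant partition $X=Y\sqcup(X\setminus Y)$, amenability of $c$ follows by patching amenability witnesses on the two pieces. On $X\setminus Y$, essential triviality provides a $c$-equivariant Borel map $\psi:X\setminus Y\to\mathcal{F}(H)$, and normalizing $\psi(x)$ to the uniform probability measure it supports gives an exact $c$-equivariant Borel map into $\mathcal{P}(H)$. On $Y$, elementarity provides a $c$-equivariant Borel map $\varphi:Y\to\mathcal{P}_{\leq 2}(\partial H)$; composing with continuous approximately equivariant maps $\widetilde\eta_n:\mathcal{P}_{\leq 2}(\partial H)\to\mathcal{P}(H)$ (which exist by the topological amenability of $H\curvearrowright\mathcal{P}_{\leq 2}(\partial H)$ recorded just before the corollary) yields an approximate amenability witness on $Y$. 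Since $Y$ is $G$-invariant, the two sequences assemble into one witnessing amenability of $c$ on all of $X$.

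$(\Rightarrow)$ Assume $c$ is amenable. Lemma~\ref{invmeasure} applied to the compact metric $H$-space $S=\partial H$ yields a $c$-equivariant Borel map $\eta:X\to\mathcal{P}(\partial H)$. Set $Y:=\eta^{-1}(\mathcal{P}_{\leq 2}(\partial H))$; since $\mathcal{P}_{\leq 2}(\partial H)$ is Borel and $H$-invariant in $\mathcal{P}(\partial H)$, the set $Y$ is $G$-invariant, and $\eta|_Y$ directly witnesses elementarity of $c|(G\times Y)$. It remains to construct a $c$-equivariant Borel map $X\setminus Y\to\mathcal{F}(H)$. If $H$ is elementary hyperbolic, then $\mathcal{P}_{\geq 3}(\partial H)$ is empty and $X\setminus Y$ is null; otherwise, the decisive input I would extract from \cite{Ad94a,Ad95,Ad96} is a Borel, $H$-equivariant (for the left-multiplication action on $\mathcal{F}(H)$) map $\beta:\mathcal{P}_{\geq 3}(\partial H)\to\mathcal{F}(H)$, so that $\beta\circ\eta|_{X\setminus Y}$ is the desired map.

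The main obstacle is producing $\beta$. Its existence rests on two points. First, for non-elementary hyperbolic $H$, the $H$-stabilizer of any $\nu\in\mathcal{P}_{\geq 3}(\partial H)$ is finite: if infinite, the stabilizer would be either elementary, in which case a hyperbolic element $h\in H_\nu$ forces $\text{supp}(\nu)\subseteq\{h_+,h_-\}$ of size at most $2$, contradicting $|\text{supp}(\nu)|\geq 3$; or non-elementary, in which case a ping-pong pair $a,b\in H_\nu$ with disjoint fixed point sets forces $\text{supp}(\nu)\subseteq\{a_+,a_-\}\cap\{b_+,b_-\}=\emptyset$, also a contradiction. Second, given this finiteness of stabilizers, a quasi-center/hull construction in the Cayley graph—associating to $\nu$ the bounded, hence finite, non-empty set of elements of $H$ that minimize a Busemann-type energy functional built from the pushforward of $\nu^{\otimes 3}$ to distinct triples in $\partial H$—yields the required Borel, $H$-equivariant assignment $\beta$.
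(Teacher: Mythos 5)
Your proof is correct and takes essentially the same route as the paper: the forward direction applies Lemma \ref{invmeasure} with $S=\partial H$ and takes $Y$ to be the preimage of $\mathcal P_{\leq 2}(\partial H)$, then pushes the $\mathcal P_{\geq 3}$ part into $\mathcal F(H)$, while the converse combines essential triviality $\Rightarrow$ amenability with the topological amenability of $H\curvearrowright\mathcal P_{\leq 2}(\partial H)$. The only difference is that where you sketch the $H$-equivariant Borel map $\mathcal P_{\geq 3}(\partial H)\rightarrow\mathcal F(H)$ by hand (finite stabilizers plus a quasi-center construction), the paper simply cites \cite[Corollary 5.3]{Ad96}.
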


\begin{proof}
Assume that $c$ is amenable. By Lemma \ref{invmeasure}, there is a $c$-equivariant Borel map $\varphi:X\rightarrow \mathcal P(\partial H)$. The conclusion follows from \cite[Lemma 3.2]{MS}; for completeness, we include the argument here.
Let $Y=\{x\in X\mid \varphi(x)\in\mathcal P_{\leq 2}(\partial H)\}$. Then $Y$ is a $G$-invariant measurable set and $c|(G\times Y)$ is elementary. Since $H$ is hyperbolic, there is an $H$-equivariant Borel map $\pi:\mathcal P_{\geq 3}(\partial H)\rightarrow\mathcal F(H)$ (see \cite[Corollary 5.3]{Ad96}). Since $\varphi(x)\in\mathcal P_{\geq 3}(\partial H)$, for every $x\in X\setminus Y$, the composition $\pi\circ\varphi$ witnesses that $c|(G\times (X\setminus Y))$ is essentially trivial. 

Since any essentially trivial cocycle is amenable (see Proposition \ref{ess_triv_amenable}(1)), in order  to prove the converse, it remains to argue that any elementary cocycle is amenable. Assume that $c$ is elementary and let $\varphi:X\rightarrow\mathcal P_{\leq 2}(\partial H)$ be a Borel $c$-equivariant map. 
Since the action $H\curvearrowright\mathcal P_{\leq 2}(\partial H)$ is amenable, there is a sequence of continuous maps $\eta_n:\mathcal P_{\leq 2}(\partial H)\rightarrow\mathcal P(H)$ such that $\|\eta_n(hy)-h\eta_n(y)\|_1\rightarrow 0$, for every $h\in H$ and  $y\in \mathcal P_{\leq 2}(\partial H)$.
The Borel maps $\psi_n:=\eta_n\circ\varphi:X\rightarrow\mathcal P(H)$ then satisfy $\|\psi_n(gx)-c(g,x)\psi_n(x)\|_1\rightarrow 0$, for every $g\in G$ and a.e. $x\in X$. In other words, $c$ is amenable.
\end{proof}

We next show that the elementarity of cocycles passes to normalizers. This relies on ideas of Adams (see \cite[Section 2]{Ad95}) and their refinement  by Hjorth and Kechris (see \cite[Theorem 2.2]{HK}).

\begin{proposition}\label{norm}
Let $G<K, H$ be countable groups, with $H$ hyperbolic, $K\curvearrowright (X,\mu)$ a pmp action and let $c:K\times X\rightarrow H$ be a cocycle. Assume that $c|(G\times X)$ is elementary and $c|(G\times Y)$ is not essentially trivial, for every non-negligible $G$-invariant measurable set $Y\subset X$. Then $c|(N_K(G)\times X)$ is elementary.
\end{proposition}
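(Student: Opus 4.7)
The plan is to exploit a ``uniqueness up to a $2$-element set'' property of the $c|(G\times X)$-equivariant maps into $\mathcal P_{\leq 2}(\partial H)$ and then take a union over $N_K(G)$. Let $\varphi:X\to\mathcal P_{\leq 2}(\partial H)$ be a Borel $c|(G\times X)$-equivariant map witnessing elementarity. For each $k\in N_K(G)$, define $\varphi_k(x):=c(k,x)^{-1}\varphi(kx)$; since $k^{-1}Gk=G$, Claim \ref{equiv} in the proof of Lemma \ref{normalizer} guarantees that each $\varphi_k:X\to\mathcal P_{\leq 2}(\partial H)$ is again $c|(G\times X)$-equivariant. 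Setting $\tilde\varphi_k(x):=\text{supp}(\varphi_k(x))$ produces a $c|(G\times X)$-equivariant Borel map into the subsets of $\partial H$ of cardinality at most~$2$.

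The heart of the argument is to show that for every \emph{finite} subset $F\subseteq N_K(G)$, the Borel map $\Phi_F(x):=\bigcup_{k\in F}\tilde\varphi_k(x)$ satisfies $|\Phi_F(x)|\leq 2$ for a.e.\ $x$. Suppose not; then $Y:=\{x\in X:|\Phi_F(x)|\geq 3\}$ is a non-negligible $G$-invariant Borel set, and $\Phi_F|_Y$ is a $c|(G\times Y)$-equivariant Borel map into the finite subsets of $\partial H$ of size $\geq 3$. Composing with the $H$-equivariant uniform-measure embedding $S\mapsto |S|^{-1}\sum_{s\in S}\delta_s\in\mathcal P_{\geq 3}(\partial H)$, and then with the $H$-equivariant Borel map $\pi:\mathcal P_{\geq 3}(\partial H)\to\mathcal F(H)$ of \cite[Corollary 5.3]{Ad96}, yields a $c|(G\times Y)$-equivariant Borel map $Y\to\mathcal F(H)$. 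This would exhibit $c|(G\times Y)$ as essentially trivial, contradicting the hypothesis. I expect this to be the main obstacle: one really needs the ``pairwise'' bound to upgrade to a bound on arbitrary finite unions, and the tool that makes this work is the Adams-style measurable reduction of the $H$-action on $3$-or-more point subsets of $\partial H$ back to $\mathcal F(H)$, which is special to hyperbolic~$H$.

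Since $N_K(G)$ is countable, intersecting the resulting conull sets over all finite $F\subseteq N_K(G)$ shows that, for a.e.\ $x$, $\tilde\psi(x):=\bigcup_{k\in N_K(G)}\tilde\varphi_k(x)$ has at most two elements. A direct computation using the cocycle identity $c(kk_0,x)=c(k,k_0x)c(k_0,x)$ and the equality $N_K(G)k_0=N_K(G)$ for every $k_0\in N_K(G)$ gives $\tilde\psi(k_0x)=c(k_0,x)\tilde\psi(x)$, so $\tilde\psi$ is $c|(N_K(G)\times X)$-equivariant. Finally, defining $\psi(x)$ to be the uniform probability measure supported on $\tilde\psi(x)$ produces a Borel $c|(N_K(G)\times X)$-equivariant map $\psi:X\to\mathcal P_{\leq 2}(\partial H)$, establishing that $c|(N_K(G)\times X)$ is elementary.
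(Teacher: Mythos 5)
Your proof is correct, and it takes a genuinely different route from the paper's. The paper works with the set $\mathcal S$ of all $c|(G\times X)$-equivariant Borel maps $X\rightarrow\mathcal P(\partial H)$: it first shows (by the same combination of the hypothesis with Adams' map $\mathcal P_{\geq 3}(\partial H)\rightarrow\mathcal F(H)$ that you use) that every element of $\mathcal S$ lands in $\mathcal P_{\leq 2}(\partial H)$ a.e., then invokes the Adams--Zimmer/Hjorth--Kechris maximality result \cite[Proposition B4.1]{HK} to produce a single $\eta\in\mathcal S$ whose support a.e.\ contains the support of every other element, normalizes it to have uniform weights, and finally shows $\eta_k=\eta$ for $k\in N_K(G)$ by a measure-preservation counting argument on the support size $s(x)$ (using $\int_X(s(x)-s(kx))\,\mathrm d\mu=0$). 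You instead bypass the maximality lemma entirely: you take the union $\tilde\psi(x)=\bigcup_{k\in N_K(G)}\mathrm{supp}(\varphi_k(x))$, bound it by two points using only the non-essential-triviality hypothesis together with the same Adams map applied on the $G$-invariant set where some finite union has $\geq 3$ points, and verify $N_K(G)$-equivariance of $\tilde\psi$ by a direct reindexing computation with the cocycle identity. Your argument is more self-contained (no appeal to \cite[Proposition B4.1]{HK} and no counting trick), at the cost of some routine bookkeeping you gloss over: the Borel measurability of $\tilde\psi$ (handled, e.g., by noting that on the conull set the increasing finite unions $\Phi_{F_n}$ stabilize) and the usual null-set adjustments to make the relevant sets genuinely $G$-invariant; the paper's route, in exchange, produces a canonical maximal equivariant measure, which is the standard Adams-style mechanism for passing elementarity to normalizers. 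Both proofs rest on the same essential input about hyperbolic boundaries, namely \cite[Corollary 5.3]{Ad96}.
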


\begin{proof}
 Let $\mathcal S$ be the set of all $c$-equivariant Borel maps $\eta:X\rightarrow\mathcal P(\partial H)$. Since $c|(G\times X)$ is elementary,  $\mathcal S\not=\emptyset$. We claim that for every $\eta\in\mathcal S$, we have $\eta(x)\in\mathcal P_{\leq 2}(\partial H)$, for a.e. $x\in X$. 
Let $\eta\in\mathcal S$ and define $Y=\{x\in X\mid \eta(x)\in\mathcal P_{\geq 3}(\partial H)\}$. Then $Y$ is a $G$-invariant measurable set. Since $H$ is hyperbolic, there is an $H$-equivariant Borel map $\pi:\mathcal P_{\geq 3}(\partial H)\rightarrow\mathcal F(H)$ (see \cite[Corollary 5.3]{Ad96}). Then the Borel map $\varphi=:\pi\circ\eta:Y\rightarrow\mathcal F(H)$ is $c$-equivariant. Hence $c|(G\times Y)$ is essentially trivial and therefore $\mu(Y)=0$, which proves our claim.
 
By applying \cite[Proposition B4.1]{HK} (which goes back to work of Adams and Zimmer, see, e.g., \cite[Lemma 2.6]{Ad95}) we find $\eta\in\mathcal S$ such that for every $\eta'\in\mathcal S$ we have that $\text{supp}(\eta'(x))\subset\text{supp}(\eta(x))$, for a.e. $x\in X$. 
Moreover, we may assume that $\eta(x)$ assigns equal weights to the elements of its support, for every $x\in X$.
We claim that \begin{equation}\label{claim}\text{$\eta(kx)=c(k,x)\eta(x)$, for every $k\in N_K(G)$ and a.e. $x\in X$.}\end{equation}
To prove the claim, let
 $k\in N_K(G)$ and define $\eta_k:X\rightarrow\mathcal P_{\leq 2}(\partial H)$  by $\eta_k(x)=c(k,x)^{-1}\eta(kx)$. Claim \ref{equiv} from the proof of Lemma \ref{normalizer} shows that $\eta_k\in\mathcal S$. By the defining property of $\eta$, $\text{supp}(\eta_k(x))\subset\text{supp}(\eta(x))$, for a.e. $x\in X$. Putting $s(x)=|\text{supp}(\eta(x))|\in\{1,2\}$, we get $s(kx)\leq s(x)$, for a.e. $x\in X$. Since $\int_X(s(x)-s(kx))\;\text{d}\mu(x)=0$, we deduce that $s(kx)=s(x)$, for a.e. $x\in X$. Thus $|\text{supp}(\eta_k(x))|=|\text{supp}(\eta(x))|$, hence $\text{supp}(\eta_k(x))=\text{supp}(\eta(x))$ and $\eta_k(x)=\eta(x)$, for a.e. $x\in X$. This proves  \eqref{claim} which  implies the conclusion.
\end{proof}

Finally, to prove Theorem \ref{CR}, we will need a straightforward generalization of \cite[Theorem 4.1]{BDV}.  
Following \cite{BDV}, we say that a countable group $G$ has {\it property (S)} if there exists a map $\eta:G\rightarrow\mathcal P(G)$ such that $\lim_{x\rightarrow\infty}\|\eta(gxh)-g\eta(x)\|_1=0$, for every $g,h\in G$. We note that a group belongs to Ozawa's class $\mathcal S$ if and only if it is exact and has property (S), see \cite[Definition 15.1.2]{BO}.

\begin{theorem}[Brothier-Deprez-Vaes, \cite{BDV}]\label{BDV}
Let $G_1,G_2,H$ be countable groups, $G_1\times G_2\curvearrowright (X,\mu)$ a pmp action and  $c:(G_1\times G_2)\times X\rightarrow H$ a cocycle.
Assume that $H$ has property (S). 
Then there is a partition $X=X_1\sqcup X_2$ into measurable, $(G_1\times G_2)$-invariant sets such that
\begin{enumerate}
\item $c|(G_1\times X_1)$ is amenable.
\item $c|(G_2\times X_2)$ is essentially trivial.

\end{enumerate}
\end{theorem}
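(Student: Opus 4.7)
The plan is to adapt the proof of \cite[Theorem 4.1]{BDV}, which establishes the same conclusion under the stronger hypothesis that $H$ belongs to Ozawa's class $\mathcal{S}$; inspecting that proof, only the property (S) portion of $\mathcal{S}$-membership is used in the part of the argument that produces the dichotomy. Writing $c_i := c|(G_i \times X)$, the commutativity $[G_1, G_2] = \{e\}$ together with the cocycle identity yields the key relation
$$c_2(g_2, g_1 x) = c_1(g_1, g_2 x)\, c_2(g_2, x)\, c_1(g_1, x)^{-1}\qquad (g_1 \in G_1,\ g_2 \in G_2,\ \text{a.e. } x),$$
which will be used to transfer asymptotic properties of $c_2$ along $G_2$-orbits into approximate equivariance properties for $c_1$ along $G_1$-orbits.

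Next, I would set up a measurable dichotomy. The class of $(G_1 \times G_2)$-invariant Borel subsets of $X$ on which $c_2|G_2$ is essentially trivial is closed under countable disjoint unions (concatenating the corresponding maps into $\mathcal{F}(H)$), so a standard exhaustion argument produces an essentially maximal such set $X_2$; set $X_1 := X \setminus X_2$. Invariance of $X_2$ under $G_1$ follows from its $G_2$-invariance by Lemma \ref{normalizer}(1), applied with $G = G_2$, $K = G_1 \times G_2$, using $G_2 \lhd G_1 \times G_2$. By construction $c_2|(G_2 \times X_2)$ is essentially trivial, so we are reduced to showing that $c_1|(G_1 \times X_1)$ is amenable.

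On $X_1$, I would use the witness $\eta : H \to \mathcal{P}(H)$ of property (S) to manufacture approximately $c_1$-equivariant maps. Maximality of $X_2$ is used to show that, in a measurable fashion, one can choose sequences $g_2^n = g_2^n(x) \in G_2$ along which $c_2(g_2^n, x) \to \infty$ in $H$ for a.e.\ $x \in X_1$. Setting $\psi_n(x) := \eta(c_2(g_2^n, x)) \in \mathcal{P}(H)$ and substituting into $\psi_n(g_1 x)$ via the displayed cocycle identity, property (S) heuristically yields
$$\|\psi_n(g_1 x) - c_1(g_1, x)\, \psi_n(x)\|_1 \longrightarrow 0,$$
which is exactly the definition of amenability of $c_1|(G_1 \times X_1)$.

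The main obstacle is that property (S) guarantees $\|\eta(gxh) - g\eta(x)\|_1 \to 0$ only for each \emph{fixed} pair $g, h \in H$, while the left factor $c_1(g_1, g_2^n x)$ arising in the computation varies with $n$. The BDV proof overcomes this by refining the choice of $g_2^n$ so that $c_1(g_1, g_2^n x)$ stabilises in $n$ (equivalently, by averaging $g_2^n$ over a set on which $c_1(g_1, \cdot)$ is approximately constant), and then producing a \emph{single} sequence $(g_2^n)$ working uniformly over a countable dense subset of $G_1$ via a diagonal argument. Transcribing this refinement to our setup---where $G_1, G_2$ sit inside the single group $G_1 \times G_2$ rather than acting on separate factors---is routine once the commutation identity above is in place; the remaining steps (measurability of $\psi_n$ and of the selections $g_2^n(x)$, and verification of the a.e.\ limit for every $g_1$) are standard.
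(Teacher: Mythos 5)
Your overall strategy matches the paper's: take a maximal $(G_1\times G_2)$-invariant set $X_2$ on which $c|(G_2\times X_2)$ is essentially trivial (with Lemma \ref{normalizer} supplying the invariance), and run the property (S) ``escape to infinity'' mechanism of \cite[Theorem 4.1]{BDV} on the complement. However, the way you implement the escape step on $X_1$ has a genuine gap. You select $x$-dependent elements $g_2^n(x)\in G_2$ with $c(g_2^n(x),x)\to\infty$ and set $\psi_n(x)=\eta(c(g_2^n(x),x))$. With $x$-dependent choices the displayed commutation identity cannot be applied: $\psi_n(g_1x)$ involves $c(g_2^n(g_1x),g_1x)$, and there is no reason that $g_2^n(g_1x)=g_2^n(x)$, so $\psi_n(g_1x)$ and $\psi_n(x)$ cannot be compared at all. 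What the paper (following \cite{BDV}) actually uses is a dichotomy, on each non-negligible invariant set $Y$, according to whether there exists a \emph{single} sequence $g_n\in G_2$ with $c(g_n,\cdot)\to\infty$ \emph{in measure} on $Y$. Pointwise escape does follow from maximality (if $\{c(g,x):g\in G_2\}$ were finite on a non-null set, then $x\mapsto\{c(g,x)^{-1}:g\in G_2\}$ would be a finite-set-valued equivariant map there), but it is strictly weaker than in-measure escape of one fixed sequence, and bridging the two is exactly the case your sketch omits: when no single sequence escapes in measure, a minimal-norm/convexity argument (as in the proof of Lemma \ref{product}) produces a nonzero $G_2$-equivariant map $\xi:Y\to\ell^2(H)$, whose level sets $\varphi(x)=\{h\in H:\xi(x)(h)=\|\xi(x)\|_\infty\}$ show that $c|(G_2\times Z)$ is essentially trivial on the non-null $G_2$-invariant set $Z=\{\xi\neq 0\}$, contradicting maximality of $X_2$. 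This alternative is the heart of the argument and is absent from your proposal.

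Moreover, even granting a single sequence, your formula produces the wrong multiplier: applying property (S) to $\eta\bigl(c(g_1,g_nx)\,c(g_n,x)\,c(g_1,x)^{-1}\bigr)$ gives approximate equivariance with respect to $c(g_1,g_nx)$, which varies with $n$, rather than with respect to $c(g_1,x)$ as amenability requires; your proposed remedy (choosing $g_n$ so that $c(g_1,g_nx)$ ``stabilises in $n$,'' simultaneously for all $g_1$ in the countable group $G_1$) is asserted without any mechanism and is not what happens in \cite{BDV}. The paper's route is different: BDV's proof yields measurable maps $\psi_n:Y\to\mathcal P(H)$ with $\|\psi_n(g_1x)-c(g_1,x)\psi_n(x)\|_1\to 0$ \emph{in measure} for each $g_1\in G_1$, and one then extracts a subsequence (diagonally over $G_1$) to get a.e.\ convergence, i.e.\ amenability of $c|(G_1\times Y)$. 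So the amenability half of your argument needs to be replaced by the actual two-case structure of \cite[Theorem 4.1]{BDV} rather than the pointwise recipe.
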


\begin{proof}
By using a maximality argument and Lemma \ref{normalizer},  in order to prove the conclusion, it suffices to show that if $Y\subset X$ is a non-negligible $(G_1\times G_2)$-invariant measurable set, then either $c|(G_1\times Y)$ is amenable or $c|(G_2\times Z)$ is essentially trivial, for a non-negligible $G_2$-invariant set $Z\subset Y$.

This assertion follows from the proof of \cite[Theorem 4.1]{BDV}. We sketch the argument, leaving the details to the reader.
Denoting $\omega=c|(G_1\times G_2\times Y)$, we have that either
\begin{enumerate}
\item there exists no sequence $g_n\in G_2$ such that $\omega(g_n,\cdot)\rightarrow\infty$ in measure, or
\item there exists a sequence $g_n\in G_2$ such that $\omega(g_n,\cdot)\rightarrow\infty$ in measure.
\end{enumerate}
In case (1), the proof of \cite[Theorem 4.1]{BDV} provides a Borel map $\xi:Y\rightarrow \ell^2(H)$ which is not zero a.e. and satisfies $\xi(gx)=\omega(g,x)\xi(x)$, for every $g\in G_2$ and a.e. $x\in Y$. Let $Z=\{x\in Y\mid \xi(x)\not=0\}$. For $x\in Z$, let $\varphi(x)=\{h\in H\mid \xi(x)(h)=\|\xi(x)\|_\infty\}$. Then $Z$ is non-negligible and $G_2$-invariant. Moreover,  $\varphi(x)\subset H$ is finite and $\varphi(g_2x)=\omega(g_2,x)\varphi(x)$, for all $g_2\in G_2$ and a.e. $x\in Z$, and thus $c|(G_2\times Z)=\omega|(G_2\times Z)$ is essentially trivial.

In case (2), the proof of \cite[Theorem 4.1]{BDV}  shows the existence of a sequence  of measurable functions $\psi_n:Y\rightarrow\mathcal P(H)$ such that the sequence of functions $\|\psi_n(gx)-\omega(g,x)\psi_n(x)\|_1$ converges to zero in measure, for every $g\in G_1$. After replacing $(\psi_n)$ with a subsequence we can assume that $\|\psi_n(gx)-\omega(g,x)\psi_n(x)\|_1\rightarrow 0$, for every $g\in G_1$ and a.e. $x\in Y$. This shows that $c|(G_1\times Y)=\omega|(G_1\times Y)$ is amenable, which finishes the proof.
\end{proof}

\begin{proof}[Proof of Theorem \ref{CR}]
Since every hyperbolic group $H$ has property $(S)$ (see \cite[Proposition 15.2.3 and Example 15.2.5]{BO}), Theorem \ref{BDV} gives a partition $X=Y_0\sqcup Y_2$ into measurable, $(G_1\times G_2)$-invariant sets such that
 $c|(G_1\times Y_0)$ is amenable and $c|(G_2\times Y_2)$ is essentially trivial. By Lemma \ref{normalizer}, there is a measurable $(G_1\times G_2)$-invariant subset $Y_1\subset Y_0$ such that $c|(G_1\times Y_1)$ is essentially trivial and $c|(G_1\times Z)$ is not essentially trivial, for any measurable $(G_1\times G_2)$-invariant non-null subset $Z$ of $V:=Y_0\setminus Y_1$.  
 
 Letting $Y=\cup_{g\in G}gV$, we will argue that
$c|(G\times Y)$ is elementary.
Since $c|(G_1\times V)$ is amenable, and  $c|(G_1\times Z)$ is not essentially trivial, for any measurable $(G_1\times G_2)$-invariant non-null subset $Z\subset V$,
Corollary \ref{amenelem} implies that $c|(G_1\times V)$ is elementary.
Further, Proposition \ref{norm} gives that $c|(G_1\times G_2\times V)$ is elementary. Since $G_1\times G_2$ is normal in $G$, Lemma \ref{normalizer} implies that $c|(G_1\times G_2\times Y)$ is elementary and $c|(G_1\times G_2\times W)$ is not essentially trivial, for any non-null measurable  $(G_1\times G_2)$-invariant set $W\subset Y$.
By applying Proposition \ref{norm} again we get that $c|(G\times Y)$ is amenable. 
If $W\subset Y$ is a non-null measurable $G$-invariant set, then $c|(G_1\times G_2\times W)$, and thus $c|(G\times W)$ is not essentially trivial.
Corollary \ref{amenelem} implies that $c|(G\times Y)$ is elementary.

Finally, since $X=Y_0\cup Y_2=V\cup Y_1\cup Y_2$ and $V\subset Y$,  we have that $X=Y\cup Y_1\cup Y_2$.
Letting $X_1=Y_1\setminus Y$ and $X_2=Y_2\setminus Y$, the conclusion follows.
\end{proof}

\begin{proof}[Proof of Corollary \ref{CR2}]
Recall that $G\in\mathcal W\mathcal R(A,B)$ means we have a short exact sequence $\{e\}\mapsto A^{(B)}\mapsto G\mapsto B\mapsto \{e\}$ such that if $\pi:G\rightarrow B$ is the quotient homomorphism, then $g(A)_bg^{-1}=(A)_{\varepsilon(g)b}$, for every $g\in G$ and $b\in B$.

By Proposition\ref{ess_triv_amenable}(7), it is enough to consider the case where $H$ itself is hyperbolic. 
Assuming that $c$ is not amenable, we will prove that $c|(A^{(B)}\times X)$ is essentially trivial.

We first claim that $c|(A_e\times X)$ is essentially trivial. 
Since $c$ is not amenable, $c$ is not elementary by Corollary \ref{amenelem}. 
Since the action $G\curvearrowright (X,\mu)$ is ergodic, Theorem \ref{CR} gives a partition into measurable $A^{(B)}$-invariant sets $X=X_1\sqcup X_2$ such that $c|(A_e\times X_1)$ is essentially trivial and $c|(A^{(B\setminus\{e\})}\times X_2)$ is essentially trivial. If $g\in G$, then $gA^{(B\setminus\{e\})}g^{-1}=A^{(B\setminus\{\pi(g)\})}$ and therefore Claim \eqref{equiv} implies that $c|(A^{B\setminus\{\pi(g)\})}\times gX_2)$ is essentially trivial. If $g\in G\setminus A^{(B)}$, then $A_e\subset A^{(B\setminus\{\pi(g)\})}$ and hence $c|(A_e\times gX_2)$ is essentially trivial. Since $X_2$ is $A^{(B)}$-invariant,  $Y=\cup_{g\in G\setminus A^{(B)}}gX_2$ is $G$-invariant and $c|(A_e\times Y)$ is essentially trivial. Since the action $G\curvearrowright (X,\mu)$ is ergodic,  $\mu(Y)\in\{0,1\}$. If $\mu(Y)=1$, then $c|(A_e\times X)$ is essentially trivial. If 
$\mu(Y)=0$, then $\mu(X_2)=0$ and hence $\mu(X_1)=1$ and $c|(A_e\times X)$ is again essentially trivial.

Now, if $b\in B$, then $gA_eg^{-1}=A_b$, for any $g\in\pi^{-1}(\{b\})$. This implies that $c|(A_b\times X)$ is essentially trivial, for every $b\in B$. Applying Lemma \ref{product} below gives that $c|(A^{(F)}\times X)$ is essentially trivial, for every finite set $F\subset B$. Writing $A^{(B)}=\cup_nA^{(F_n)}$, where $(F_n)\subset B$ is an increasing sequence of finite subsets with $\cup_nF_n=B$, and using Proposition \ref{ess_triv_amenable}, it follows that $c|(A^{(B)}\times X)$ is amenable.

By Lemma \ref{normalizer}, since  $A^{(B)}\lhd G$ and the action $G\curvearrowright (X,\mu)$ is ergodic, either $c|(A^{(B)}\times X)$ is essentially trivial, or $c|(A^{(B)}\times Z)$ is not essentially trivial, for any non-null measurable $A^{(B)}$-invariant set $Z\subset X$. In the latter case,
Corollary \ref{amenelem} implies that $c|(A^{(B)}\times X)$ is elementary.
 Proposition \ref{norm} further gives that $c|(G\times X)$ is elementary, which contradicts our assumption and finishes the proof.  
\end{proof}

\begin{lemma}\label{product}
Let $G_1,G_2<G$ and let $H$ be a countable group such that $G_2\subset N_G(G_1)$. Let $G\curvearrowright (X,\mu)$ be a pmp action and $c:G\times X\rightarrow H$ be a cocycle such that $c|(G_1\times X)$ and $c|(G_2\times X)$ are essentially trivial. Then $c|(G_1G_2\times X)$ is essentially trivial. 
\end{lemma}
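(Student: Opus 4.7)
The plan is to construct a Borel map $\Phi\colon X \to \mathcal{F}(H)$ satisfying $\Phi(gx) = c(g, x)\Phi(x)$ for every $g \in G_1 G_2$ and a.e. $x \in X$, combining the given $c|(G_i\times X)$-equivariant Borel maps $\varphi_i\colon X \to \mathcal{F}(H)$ for $i = 1, 2$ supplied by essential triviality.

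The key conceptual step exploits the normalization $G_2 \subset N_G(G_1)$: by the computation of Claim \ref{equiv} in the proof of Lemma \ref{normalizer}, for each $g_2 \in G_2$ the translated map $(g_2\cdot\varphi_1)(x) := c(g_2, g_2^{-1}x)\varphi_1(g_2^{-1}x)$ is again $c|(G_1\times X)$-equivariant, since $g_2^{-1} G_1 g_2 = G_1$. This furnishes a natural $G_2$-action on the collection $\mathcal{S}_1$ of all $c|(G_1 \times X)$-equivariant Borel maps $X \to \mathcal{F}(H)$, and a $c|(G_1G_2 \times X)$-equivariant map is precisely a $G_2$-fixed point of $\mathcal{S}_1$. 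The problem thus reduces to producing such a fixed point from $\varphi_1$ and $\varphi_2$, which I would attempt via the set product $\Phi(x) := \varphi_2(x)\cdot\varphi_1(x) \in \mathcal{F}(H)$. The $G_2$-equivariance of $\Phi$ is immediate from that of $\varphi_2$, while verifying the $G_1$-equivariance reduces to the set-theoretic identity $\varphi_2(g_1 x)\, c(g_1, x) = c(g_1, x)\,\varphi_2(x)$ in $H$, whose proof invokes the cocycle identity $c(g_1 g_2, x) = c(g_1, g_2 x) c(g_2, x) = c(g_2, g_1' x) c(g_1', x)$ with $g_1' := g_2^{-1} g_1 g_2 \in G_1$ provided by the normalization.

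The main obstacle is that this conjugation-type identity for $\varphi_2$ under $G_1$ is not automatic from the given hypotheses, so the naive product $\Phi$ may need refinement. I expect the proof either to symmetrize $\varphi_2$ over the $G_1$-action using $\varphi_1$ as auxiliary data (exploiting the fact that on each $G_1$-orbit the values of $c|(G_1 \times X)$ lie in the finite stabilizer $\mathrm{Stab}_H(\varphi_1(x))$), or else to first cohomologically arrange $c|(G_1 \times X)$ to take values in a finite subgroup of $H$ on each piece of a countable $G_1$-invariant Borel decomposition of $X$, and then align that reduction with the $G_2$-structure via the normalization and the essential triviality of $c|(G_2\times X)$. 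In either refinement the essential algebraic input remains the commuting-in-quotient relation $g_1 g_2 = g_2 g_1'$ supplied by $G_2 \subset N_G(G_1)$.
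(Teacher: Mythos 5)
Your reduction to finding a $G_2$-fixed point in the space $\mathcal S_1$ of $c|(G_1\times X)$-equivariant Borel maps $X\to\mathcal F(H)$ is a reasonable framing, but the construction you offer does not close, and the two unfinished steps are exactly the content of the lemma. First, the claim that $G_2$-equivariance of $\Phi(x)=\varphi_2(x)\varphi_1(x)$ is ``immediate from that of $\varphi_2$'' is false: $\Phi(g_2x)=c(g_2,x)\varphi_2(x)\,\varphi_1(g_2x)$, and $\varphi_1(g_2x)$ is completely uncontrolled because $\varphi_1$ is only equivariant for $G_1$; the normalization $G_2\subset N_G(G_1)$ says nothing about how $\varphi_1$ transforms under $G_2$. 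Second, the identity $\varphi_2(g_1x)\,c(g_1,x)=c(g_1,x)\,\varphi_2(x)$ needed for $G_1$-equivariance is, as you acknowledge, not available, and the proposed repairs are not carried out; moreover the parenthetical fact you hope to exploit is itself wrong: equivariance gives $c(g_1,x)\varphi_1(x)=\varphi_1(g_1x)$, so $c(g_1,x)$ lies in the finite set $\varphi_1(g_1x)\varphi_1(x)^{-1}$, not in $\mathrm{Stab}_H(\varphi_1(x))$, and the alternative of reducing $c|(G_1\times X)$ into finite subgroups piecewise runs into precisely the $G_2$-alignment problem you flag without resolving. So what you have is a plan with two unproved steps, one of them asserted incorrectly.

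The paper's argument is of a different, analytic nature and never tries to combine $\varphi_1,\varphi_2$ algebraically. By exhaustion it suffices to produce, inside any non-null $G_1G_2$-invariant set $Y$, a non-null $G_1G_2$-invariant $Z\subset Y$ on which the restriction is essentially trivial. Essential triviality of $c|(G_i\times Y)$ yields finite sets $F_i\subset H$ with $\mu(\{x\in Y\mid c(g,x)\in F_i\})>\tfrac{2}{3}\mu(Y)$ for all $g\in G_i$, and the cocycle identity (here is where $G_1G_2$ being a group via normality enters) gives $\mu(\{x\in Y\mid c(g,x)\in F_1F_2\})>\tfrac{1}{3}\mu(Y)$ for all $g\in G_1G_2$. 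This uniform non-escape in measure feeds into the Brothier--Deprez--Vaes device: in the representation of $G$ on $\mathrm{L}^2(X\times H)$ twisted by $c$, the minimal-norm vector in the convex hull of the $G_1G_2$-translates of $1_Y\otimes 1_{F_1F_2}$ is nonzero and invariant, and the finite set where the resulting $\ell^2(H)$-valued function attains its sup norm gives the desired equivariant map into $\mathcal F(H)$ on a non-null invariant set. If you want to complete your approach, you will need an input of this kind (or a maximality/weak-mixing argument in the spirit of Proposition \ref{norm}); the pointwise product of the two equivariant maps does not suffice.
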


\begin{proof}
It suffices to prove that given a non-null $G_1G_2$-invariant measurable subset $Y\subset X$,  there is a non-null $G_1G_2$-invariant measurable subset $Z\subset Y$ such that $c|(G_1G_2\times X)$ is essentially trivial.  Since $c|(G_i\times Y)$ is essentially trivial, we can find finite sets $F_i\subset H$ such that $$\text{$\mu(\{x\in Y\mid c(g,x)\in F_i\})>\frac{2\mu(Y)}{3}$, for all $g\in G_1$ and $i\in\{1,2\}$.}$$
Using this fact and the cocycle identity and denoting $F=F_1F_2$ it is immediate that 
$$\text{$\mu(\{x\in Y\mid c(g,x)\in F\})>\frac{\mu(Y)}{3}$, for all $g\in G_1G_2$.}$$
We now proceed as in the proof of \cite[Theorem 4.1]{BDV}. Consider the unitary representation $\pi:G\rightarrow\mathcal U(\text{L}^2(X\times H))$ given by $(\pi(g)^*\xi)(x,h)=\xi(gx,c(g,x)h)$, for all $g\in G$, $\xi\in\text{L}^2(X\times H)$, $x\in X$ and $h\in H$. Then for every $g\in G_1G_2$ we have that $$\text{$\langle \pi(g)^*(1_Y\otimes 1_F),1_Y\otimes 1_{\{e\}})=\mu(\{y\in Y\mid c(g,x)\in F\})>\frac{\mu(Y)}{3}$, for every $g\in G_1G_2$.} $$
From this it follows that  the element $\eta\in \text{L}^2(X\times H)$  of minimal $\|\cdot\|_2$ in the convex hull of $\{\pi(g)^*(1_Y\otimes 1_F)\mid g\in G_1G_2\}$ is non-zero and $\pi(G)$-invariant. Thus, we get a Borel map $\zeta:X\rightarrow\ell^2(H)$ such that $\zeta(gx)=c(g,x)\zeta(x)$, for all $g\in G_1G_2$ and a.e. $x\in X$.
 Let $Z=\{x\in Y\mid \zeta(x)\not=0\}$. For $x\in Z$, let $\varphi(x)=\{h\in H\mid \zeta(x)(h)=\|\zeta(x)\|_\infty\}$. Then $Z$ is non-null and $G_1G_2$-invariant. Moreover,  $\varphi(x)\subset H$ is finite and $\varphi(gx)=c(g,x)\varphi(x)$, for all $g\in G_1G_2$ and a.e. $x\in Z$. Thus, $c|(G_1G_2\times Z)$ is essentially trivial, as desired.
\end{proof}

\section{An extension of results of Monod-Shalom} 

The goal of this section is to extend certain results of Monod and Shalom from \cite{MS06} to infinite direct sums and to ICC groups in the class $\mathcal{C}$.
A countable group $\Gamma$ belongs to the class $\mathcal C$  \cite[Definition 2.15]{MS06} if it admits a mixing unitary representation $\pi$ such that $\text{H}^2_{\text{b}}(\Gamma,\pi)\not=0$.
The following ``cocycle reduction lemma'' allows us to extend the results in \cite{MS06} so that they apply not just to torsion-free groups in $\mathcal{C}$, but, more generally (since by \cite[Proposition 7.11]{MS06} any torsion-free group in $\mathcal C$ is ICC), to all  ICC groups in $\mathcal{C}$. 

\begin{lemma}\label{lem:cocyclered}
Let $G$ and $H$ be  countable groups with  normal subgroups $M\triangleleft G$ and $N\triangleleft H$. Let $G \curvearrowright (X,\mu)$ be a pmp action and let $H\curvearrowright Z$ be a smooth Borel action. Assume that:
\begin{enumerate}
\item we have an ME cocycle $c : G \times X\rightarrow H$, along with a Borel map $f:X\rightarrow Z$ with $f(g\cdot x)=c (g,x)\cdot f(x)$, for every $g\in M$ and almost every $x\in X$,
\item for all $z\in Z$, the projection of the stabilizer $H_z$ of $z\in Z$ to $H/N$ is finite,
\item $H/N$ has no nontrivial finite invariant random subgroups.
\end{enumerate}
Then there exists  a Borel map $\varphi : X\rightarrow H$ such that $\varphi (g\cdot x)c (g,x)\varphi (x)^{-1}\in N$, for every $g\in M$ and almost every $x\in X$.
\end{lemma}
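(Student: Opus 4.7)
The plan is to first use the smoothness of $H\curvearrowright Z$ to reduce $c$ to a cocycle whose values modulo $N$ land in finite subgroups of $H/N$, then extend this setup to the ambient ME-coupling $(\Omega,m)$ producing $c$, and from the extended picture construct a finite invariant random subgroup of $H/N$. Hypothesis (3) will force that IRS to be trivial, from which the desired trivialization follows immediately.

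\textbf{Smoothness reduction.} Since $H\curvearrowright Z$ is smooth, I will choose a Borel transversal $T\subset Z$ together with a Borel section $\sigma:Z\to H$ satisfying $\rho(z):=\sigma(z)\cdot z\in T$, and put $\varphi_0:=\sigma\circ f:X\to H$. A direct calculation using $f(g\cdot x)=c(g,x)\cdot f(x)$ for $g\in M$ shows that the conjugated cocycle $c'(g,x):=\varphi_0(g\cdot x)c(g,x)\varphi_0(x)^{-1}$ stabilizes $\rho(f(x))$; that is, $c'(g,x)\in H_{\rho(f(x))}$ for every $g\in M$ and a.e.\ $x$. By hypothesis (2), $\bar K(x):=\pi_N(H_{\rho(f(x))})$ is therefore a finite subgroup of $H/N$, depending $M$-invariantly on $x$. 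Once one proves $\bar K(x)=\{e\}$ for a.e.\ $x$, normality of $N$ gives $c'(g,x)\in N$, so $\varphi:=\varphi_0$ will satisfy the conclusion.

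\textbf{Constructing and killing the IRS.} Let $(\Omega,m)$ be the ME-coupling realizing $c$, with $X\subset\Omega$ a fundamental domain for $H$ and $Y\subset\Omega$ a fundamental domain for $G$, and extend $f$ to $\tilde f:\Omega\to Z$ by $\tilde f(hx):=h\cdot f(x)$ for $h\in H$, $x\in X$. Commutativity of the $G$- and $H$-actions on $\Omega$ together with the $M$-equivariance of $f$ makes $\tilde f$ simultaneously $H$-equivariant and $M$-invariant. Consequently the Borel map $\tilde L:\Omega\to\mathrm{Sub}(H/N)$ defined by $\tilde L(\omega):=\pi_N(H_{\tilde f(\omega)})$ is $M$-invariant, takes values in finite subgroups by (2), and is $H$-equivariant under conjugation: $\tilde L(h\omega)=\pi_N(h)\tilde L(\omega)\pi_N(h)^{-1}$. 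I then push the normalized measure $m(Y)^{-1}m_{|Y}$ forward through $\tilde L$ to obtain a probability measure $\theta$ on $\mathrm{Sub}(H/N)$, supported on finite subgroups. Combining the $H$-invariance of $m$, the $H$-covariance of $\tilde L$, the fact that $h^{-1}Y$ is again a $G$-fundamental domain, and a $G/M$-averaging to cope with $\tilde L^{-1}(A)$ being only $M$-invariant, I will verify that $\theta$ is $H/N$-conjugation invariant, i.e.\ a finite IRS of $H/N$. Hypothesis (3) then forces $\theta=\delta_{\{e\}}$, hence $\tilde L\equiv\{e\}$ $m$-a.e.\ on $Y$. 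Running the identical argument on each $G$-fundamental domain $g_0 Y$ for $g_0$ ranging over representatives of $G/M$, and invoking the $M$-invariance of $\tilde L$ to saturate each conclusion inside $g_0 MY$, propagates this to $m$-a.e.\ $\omega\in\Omega$ (since $\bigsqcup_{g_0M\in G/M} g_0MY=GY=\Omega$). In particular $\pi_N(H_{f(x)})=\{e\}$ for $\mu$-a.e.\ $x\in X$, which by normality of $N$ forces $H_{\rho(f(x))}\subseteq N$; hence $c'(g,x)\in N$ for all $g\in M$, as required.

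\textbf{Main obstacle.} The crux of the proof is the claim that $\theta$ is $H/N$-conjugation invariant. The natural change of variable $y\mapsto h^{-1}y$ sends the $G$-fundamental domain $Y$ to another such domain $h^{-1}Y$, but $\tilde L^{-1}(A)$ is only $M$-invariant rather than $G$-invariant, so $m(Y\cap\tilde L^{-1}(A))$ and $m(h^{-1}Y\cap\tilde L^{-1}(A))$ are not tautologically equal. Closing this gap is exactly where normality of $M\triangleleft G$ and the finite $G/M$-covolume inside $M\backslash\Omega$ are used: one either averages over $G/M$-cosets, or equivalently descends $\tilde L$ to $M\backslash\Omega$ and exploits the commuting $(G/M)\times H$-action there. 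This is the step of the argument that will require the most care in the writeup.
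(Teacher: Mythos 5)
Your first (transversal/section) step is fine -- it is a hands-on version of the Zimmer cocycle-reduction step -- but the second half of the argument has a genuine gap, and it is not a repairable technicality. The claim you flag as the ``main obstacle'' (conjugation-invariance of the pushforward $\theta$ of the $G$-fundamental-domain measure under $\tilde L(\omega)=\pi_N(H_{\tilde f(\omega)})$) cannot be established, because the statement you would deduce from it, namely $\pi_N(H_{f(x)})=\{e\}$ for a.e.\ $x$, is simply false under the hypotheses. Concretely: take $N=\{e\}$, let $H$ be an ICC group containing a nontrivial finite subgroup $H_0$, let $Z=H/H_0$ with the (smooth, transitive) translation action, and take any ME-coupling whose cocycle restricted to $M$ is cohomologous to the trivial homomorphism, say $c(g,x)=\psi(gx)^{-1}\psi(x)$ for $g\in M$. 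Then $f(x):=\psi(x)^{-1}H_0$ is $c$-equivariant for $M$, hypotheses (1)--(3) all hold, and the conclusion of the lemma holds with $\varphi=\psi$; yet $H_{f(x)}=\psi(x)^{-1}H_0\psi(x)$ is nontrivial for every $x$, so your $\theta$ is supported on nontrivial finite subgroups and hence cannot be an IRS of $H$. The root of the problem is that $\tilde L$ is only $M$-invariant, not $G$-invariant, so its distribution over a $G$-fundamental domain genuinely depends on the choice of domain; neither of your proposed fixes closes this, since $G/M$ is infinite (and nonamenable in the intended application), so there is no $G/M$-averaging, and after descending to $M\backslash\Omega$ the map is still not $G/M$-invariant.

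What is missing is the device the paper uses to make the subgroup assignment canonical, and hence equivariant under all of $G$ rather than just $M$: after conjugating $c$ into the stabilizers as you did, one conjugates further so that on almost every $M$-ergodic component $w$ the reduced cocycle $\bar c^{\varphi}_w$ takes values in a subgroup $K_w$ of the finite group $\pi_N(H_{\theta(w)})$ and is \emph{minimal}, i.e.\ not cohomologous to a cocycle into a proper subgroup of $K_w$. Minimality pins down $K_w$ canonically; since $M\lhd G$, the $G$-action permutes the $M$-ergodic components (the paper sees this through an ergodic decomposition of the skew product $X\times \bar H$), and one obtains $K_{\pi(g\cdot x)}=\bar c^{\varphi}(g,x)K_{\pi(x)}\bar c^{\varphi}(g,x)^{-1}$ for \emph{every} $g\in G$. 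Only with this full $G$-covariance can the subgroups be transported to the $H$-side of the coupling, via the maps $p:Y\rightarrow X$ and $\rho:Y\rightarrow H$ coming from the identification $\Phi:Y\times G\cong X\times H$: conjugating $K_{\pi(p(y))}$ by the images of $\varphi(p(y))$ and $\rho(y)$ yields a map $y\mapsto C^y$ with $C^{h\cdot y}=hC^yh^{-1}$, and since $\nu$ is $H$-invariant its pushforward is an honest finite IRS of $H/N$, which hypothesis (3) kills. Triviality of the $K_w$ then says precisely that $\varphi(g\cdot x)c(g,x)\varphi(x)^{-1}\in N$ for $g\in M$ -- the desired conclusion -- with no claim whatsoever about the stabilizers themselves, which, as the example above shows, need not project trivially.
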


This result holds more generally for locally compact second countable groups (with ``finite'' in items (2) and (3) replaced by ``compact''), but we state it in the case of countable groups, since this is the only case needed later on.
An {\it invariant random subgroup (IRS)} \cite{AGV} of a  countable group $H$ is a conjugation-invariant Borel probability measure $\mu$ on the  compact space $\text{Sub}_H$ of subgroups of $H$. 
An IRS  $\mu$  of $H$ is {\it finite} if $\mu$-a.e. $H_0\in\text{Sub}_H$ is finite, and {\it trivial} if it is supported on the trivial subgroup of $H$. 
We note that the assumption that $H/N$ has no nontrivial finite IRS is rather mild, and holds for instance if $H/N$ has a trivial amenable radical  \cite{BDL16} or is  ICC.

\begin{proof}
Let $\sigma _G$ and $\sigma _H$ be the counting measures on $G$ and $H$, respectively. Consider the action $G\times H \curvearrowright (X\times H, \mu\times \sigma _H)$ given by $g(x,h_0)=(g\cdot x,c(g,x)h_0)$ and $h(x,h_0)=(x,h_0h^{-1})$, for $g\in G, h\in H$. Since $c$ is an ME-cocycle,  there exist a pmp action $H\curvearrowright (Y,\nu )$, a cocycle $d :H\times Y\rightarrow G$, and a measure 
preserving isomorphism $\Phi : (Y\times  G, \nu\times \sigma_G)\rightarrow (X\times H, \mu\times \sigma _H)$ that is $(G\times H)$-equivariant, where $Y\times G$ is equipped with the $(G\times H)$-action given by $g(y,g_0)=(y,g_0g^{-1})$ and $h(y,g_0)=(h\cdot y,d(h,y)g_0)$, for $g\in G,h\in H$.

For $y\in Y$, let $p(y)\in X$ and $\rho(y)\in H$ be such that $\Phi (y,e)=(p(y),\rho(y) )$. Then for each $h\in H$ and $y\in Y$ we have
\begin{align*}
(p(h\cdot x),\rho({h\cdot y})h) &= h^{-1} \Phi (h\cdot y,e) \\&= \Phi (y, d(h,y)^{-1}) \\ &= d (h, y)\Phi (y, e) \\
&=  (d (h,y)\cdot p(y), c (d(h,y),p(y)) \rho({y})),
\end{align*}
so that 
$p(h\cdot y)=d (h,y)p(y)$ and $c (d(h,y),p(y)) = \rho({h\cdot y})h\rho(y)^{-1}$.

Let $\pi : (X,\mu) \rightarrow (W, \zeta)$ be the ergodic decomposition map for the action $M\curvearrowright (X,\mu)$. Since $M$ is normal in $G$, the action of $G$ on $(X,\mu)$ descends through $\pi$ to a pmp action of $G$ on $(W,\zeta)$. Applying Zimmer's cocycle reduction lemma \cite[Lemma 5.2.11]{Zi84} measurably across the $M$-ergodic components on $(X,\mu)$, we obtain measurable maps $\theta : W\rightarrow Z$ and $\varphi : X\rightarrow H$ such that $c ^{\varphi}(g,x)\coloneqq \varphi (g\cdot x)c(g,x)\varphi (x)^{-1} \in H_{\theta (\pi (y))}$ for all $g\in M$, $x\in X$. Let $h\mapsto \bar{h}$ denote the projection map $H\rightarrow H/N$ so that $\bar{H} = H/N$. For a map $\chi$ taking values in $H$, denote by  $\bar{\chi}$ its composition with the projection to $\bar{H}$, and for a subgroup $H_0<H$ denote $\bar{H}_0=\{\bar{h}\mid h\in H_0\}$. For $w\in W$, let $\bar{c}^{\varphi}_w : M\times \pi ^{-1}(w)\rightarrow \bar{H}_{\theta (w)}$ denote the restriction of $\bar{c}^{\varphi}$ to $M\times \pi ^{-1}(w)$. Since the groups $\bar{H}_{\theta (w)}$ are finite, by conjugating $c ^{\varphi}$ further if necessary we may assume without loss of generality that for almost every $w\in W$ the cocycle $\bar{c}^{\varphi}_w$
 is minimal, i.e., it takes values in a subgroup $K_w<\bar{H}_{\theta (w)}$ and is not cohomologous to a cocycle taking values in a proper subgroup of $K_w$.


Our next goal is to use the subgroups $(K_w)_{w\in W}$ to construct a finite  IRS of $\bar{H}$.
Let $\sigma _{\bar{H}}$ be the counting measure of $\bar{H}$. Consider the 
action $G\curvearrowright (X\times \bar{H},\mu\times \sigma _{\bar{H}} )$ given by $g(x,\bar{h})=(g\cdot x,\bar{c}^{\varphi}(g,x)\bar{h} )$. Then an ergodic decomposition for the restriction of this action to $M$ is obtained as follows: for almost every $w\in W$ and each $\bar{h}\in \bar{H}$, since the cocycle $\bar{c}^{\varphi}_w$ is minimal, the action of $M$ on $(\pi ^{-1}(w)\times K_w\bar{h} , \mu_w\times \sigma _{K_w\bar{h}})$ is ergodic, where $\mu_w$ is the measure on $\pi ^{-1}(w)$ coming from the disintegration of $\mu$ over $\pi$, and $\sigma _{K_w\bar{h}}$ is the right translation by $\bar{h}$ of the counting measure $\sigma_{K_w}$ of $K_w$. Let $\nu _w$ be the pushforward of $\sigma _{\bar{H}}$ to the right coset space $K_w\backslash \bar{H}$ under the map $\bar{h}\mapsto K_w\bar{h}^{-1}$. Then  $\sigma _{\bar{H}} = \int _{K_w\backslash \bar{H}} \sigma _{K_w\bar{h}}\; d\nu _w (K_w\bar{h})$. Hence,
\[
\mu\times \sigma _{\bar{H}} = \int _W\int _{K_w\backslash \bar{H}} (\mu_w \times \sigma _{K_w\bar{h}}) \; d\nu _w (K_w\bar{h})\, d\zeta(w)
\]
which exhibits an ergodic decomposition for the action of $M$ on $X\times\bar{H}$. 

Since $M$ is normal in $G$, the action of $G$ on $X\times\bar{H}$ permutes the $M$-ergodic components. Therefore, for every $g\in G$, for almost every $w\in W$ and $\bar{h}_0\in \bar{H}$ there is some $\bar{h}\in \bar{H}$ (depending on $g$, $w$, and $\bar{h}_0$) such that the measures $g_*(\mu_w \times \sigma _{K_w\bar{h}_0})$ and $\mu_{g\cdot w}\times \sigma _{K_{g\cdot w}\bar{h}}$ are equivalent. Thus, for $\mu_w$-a.e. $x\in \pi ^{-1}(w)$ the measures $\bar{c}^{\varphi}(g,x)_*\sigma _{K_w\bar{h}_0}$ and $\sigma _{K_{g\cdot w}\bar{h}}$ have the same support, i.e., $\bar{c}^{\varphi}(g,x)K_w\bar{h}_0 = K_{g\cdot w}\bar{h}$. For such $x$ we have $\bar{c}^{\varphi}(g,x)\in K_{g\cdot w}\bar{h}\bar{h}_0^{-1}$, so $\bar{c}^{\varphi}(g,x)K_w = K_{g\cdot w}\bar{h}\bar{h}_0^{-1} = K_{g\cdot w}\bar{c}^{\varphi}(g,x)$ and hence
\begin{equation}\label{K_yy}
\text{$K_{\pi (g\cdot x)} = \bar{c}^{\varphi}(g,x)K_{\pi (x)}\bar{c}^{\varphi}(g,x)^{-1}$, \;\;\; for a.e. $x\in X$.}
\end{equation}

Letting $C_x\coloneqq \varphi (x)^{-1}K_{\pi(x)}\varphi (x)$, we then have $C_{g\cdot x} = c(g,x)C_x c (g,x)^{-1}$ for every $g\in G$ and almost every $x\in X$. For $y\in Y$, define $C^y \coloneqq \rho(y) ^{-1}C_{p(y)} \rho(y)$. Since $p(h\cdot y)=d(h,y)p(y)$  and $c (d (h,y),p(y)) = \rho({h\cdot y})h\rho(y)^{-1}$, for every $h\in H$, we have
\begin{align*}
C^{h\cdot y} = \rho(h\cdot y)^{-1}C_{p(h\cdot y)}\rho(h\cdot y) &= \rho(h\cdot y)^{-1}C_{d(h,y)\cdot p(y)}\rho(h\cdot y)  \\ &= \rho(h\cdot y) ^{-1}c(d(h,y),p(y))C_{p(y)}c(d(h,y),p(y))^{-1}\rho(h\cdot y) \\
&=  h \rho(y) ^{-1}C_{p(y)}\rho(y) h^{-1} \\
&= hC^y h^{-1}.
\end{align*}

Therefore, the image of $\nu$ under $y\mapsto C^y$ is a finite IRS of $\bar{H}$. Hence $C^y$ is trivial almost surely since by assumption $\bar{H}$ has no nontrivial finite IRS. This implies that $K_{\pi(p(y))}$ is trivial for a.e. $y\in Y$.
Since $G\cdot\Phi(Y\times\{e\})=\Phi(G\cdot (Y\times\{e\})=\Phi(Y\times G)=X\times H$ and $G\cdot \Phi(Y\times \{e\})\subset G\cdot (p(Y)\times H)\subset G\cdot p(Y)\times H$, we get that $G\cdot p(Y)\subset X$ is $\mu$-conull.
By combining the last two facts with \eqref{K_yy}, we further deduce that
 $K_{\pi(x)}$ is trivial for a.e. $x\in X$. 
 
 Hence $K_w$ is trivial for a.e. $w\in W$. Thus, $\varphi (g\cdot x) c(g,x)\varphi (x)^{-1} \in N$, for every $g\in M$ and almost every $x\in X$.
\end{proof}

In what follows, given a direct sum $H = \bigoplus _{j\in J}H _j$ and $j_0\in J$, we write $\widehat{H}_{j_0}$ for $\bigoplus _{j\neq j_0}H _j$. In the case where $J$ is finite and $G/M$ is assumed torsion-free, the following is shown in \cite[Proposition 5.1]{MS06}. We extend this result to the case where $J$ is countable 
 (i.e., either finite or countably infinite) and $G/M$ is only assumed ICC.

\begin{proposition}\label{prop:classCquotient}
Let $(\Omega, m)$ be an ME-coupling of countable groups $G$ and $H = \bigoplus _{j\in J} H_j$. Suppose that $M$ is a normal subgroup of $G$ such that $G/M$ is ICC and belongs to the class $\mathcal{C}$. Then there exist $j_0 \in J$ and a $G$-fundamental domain $Y\subset \Omega$ such that $\widehat{H} _{j_0} Y\subset MY$.
\end{proposition}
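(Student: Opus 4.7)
The plan is to extend Monod-Shalom's argument for \cite[Proposition 5.1]{MS06}, which proves the analogous statement for finite $J$ and torsion-free $G/M$, in both directions. The torsion-free-to-ICC extension will be handled by replacing the use of torsion-freeness with an application of the cocycle reduction Lemma \ref{lem:cocyclered}; the finite-to-countable extension will be handled by an iterated application of the finite case together with a combining argument at the level of cocycles.

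First I would handle the finite-$J$ case with ICC $G/M$. Setting up the ME-cocycle $\beta:H\times Y\to G$ coming from the coupling and composing with $q:G\to G/M$, I would invoke Monod-Shalom's bounded-cohomology machinery: using that $G/M\in\mathcal{C}$ together with the direct-product decomposition of bounded cohomology for the finite product $H=H_1\times\cdots\times H_n$ from \cite{MS06}, one obtains a distinguished $j_0\in J$, a smooth $(G/M)$-space $Z$ whose point stabilizers are finite, and a Borel $(q\circ\beta|_{\widehat{H}_{j_0}\times Y})$-equivariant map $f:Y\to Z$. At the point where \cite{MS06} collapses the finite stabilizers using torsion-freeness of $G/M$, I would instead apply Lemma \ref{lem:cocyclered} with the roles of $G$ and $H$ in the lemma swapped: the lemma's group $G$ is our $H$ (acting on $Y$), the lemma's $H$ is our $G$, the lemma's normal subgroup $M$ is $\widehat{H}_{j_0}$, the lemma's $N$ is our $M$, the cocycle is $\beta$, and the $G$-action on $Z$ is factored through $q$. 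Hypothesis (2) of the lemma holds by construction of $Z$, and hypothesis (3) holds because $G/M$ is ICC. The lemma supplies $\varphi:Y\to G$ conjugating $\beta|_{\widehat{H}_{j_0}\times Y}$ into $M$; equivalently, $\widehat{H}_{j_0}Y'\subset MY'$ for the fundamental domain $Y'=\{\varphi(y)y:y\in Y\}$.

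For countable $J$, I would enumerate $J=\{j_n\}_{n\geq 1}$ and for each $n$ apply the finite case to the $(n+1)$-fold decomposition $H=H_{j_1}\times\cdots\times H_{j_n}\times\bigoplus_{k>n}H_{j_k}$. At each stage, either the conclusion already holds with some $j_0\in\{j_1,\ldots,j_n\}$ (and I am done), or I obtain a fundamental domain $Y^{(n)}$ with $H_{\{j_1,\ldots,j_n\}}Y^{(n)}\subset MY^{(n)}$. If the latter occurs for every $n$, I would reinterpret the condition in terms of the cohomology class of $q\circ\beta:H\times Y\to G/M$: its restriction to every finite subsum of $H$ is trivializable. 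Using that the factors $H_j$ commute pairwise, any two Borel trivializations of this cocycle on a fixed subsum differ by an invariant function, which after passing to ergodic components reduces to a $G/M$-valued constant; this uniqueness-up-to-a-constant allows me to coherently normalize the trivializations along the exhaustion and take a pointwise limit to produce a single trivialization of $q\circ\beta$ on all of $H$. That in turn yields $HY\subset MY$ for the associated $Y$, and the proposition holds with any choice of $j_0$.

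The most delicate step I expect is this combining argument in the infinite case: the trivializations on finite subsums produced by the finite case are a priori unrelated, and upgrading the ``unique up to a constant'' normalization into a measurable coherent choice requires a careful use of commutation and ergodicity, with a final application of Lemma \ref{lem:cocyclered} to absorb any residual constant ambiguity using once more the ICC structure of $G/M$.
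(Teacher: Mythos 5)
Your finite-$J$ argument is essentially the paper's: the Monod--Shalom bounded-cohomology step produces the distinguished index $j_0$ together with a Borel equivariant map into a smooth $(G/M)$-space with finite point stabilizers (coming from a mixing representation), and Lemma \ref{lem:cocyclered}, applied with the roles of the two groups swapped exactly as you describe (its ``$M$'' being $\widehat{H}_{j_0}$, its ``$N$'' being our $M$, hypothesis (3) supplied by $G/M$ being ICC), replaces the torsion-freeness used in \cite[Proposition 5.1]{MS06}. That part is correct and matches the paper.

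The infinite-$J$ case is where there is a genuine gap. In your ``bad case for every $n$'' you must upgrade a sequence of maps $\varphi_n$ conjugating the projected cocycle into $M$ on $H_{(F_n)}$, $F_n=\{j_1,\dots,j_n\}$, to a single such map on all of $H$. Your uniqueness observation only says that two trivializations on a fixed $H_{(F_n)}$ differ by an $H_{(F_n)}$-invariant function; since the $H_{(F_n)}$-action is far from ergodic, this discrepancy is a measurable function varying over an uncountable ergodic decomposition, and nothing in your sketch explains how to choose the normalizations so that $\varphi_n(y)$ stabilizes pointwise (a pointwise limit in a discrete group means eventual stabilization). Cohomological triviality, unlike amenability, does not pass to increasing unions of subgroups in any evident way, so the limit step as written is unjustified. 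The paper sidesteps this entirely: it records in the finite case that the trivialized restriction $(p\circ c)|_{\widehat{H}_{j_0}\times Y_0}$ is cohomologous to the trivial homomorphism, hence amenable. If $(p\circ c)|_{H_{(F)}\times Y_0}$ were amenable for every finite $F\subset J$, then $p\circ c$ would be amenable by Proposition \ref{ess_triv_amenable}(2), forcing $G/M$ to be amenable by Proposition \ref{ess_triv_amenable}(6), contradicting $G/M\in\mathcal C$. So some finite $F$ has non-amenable restriction, and applying the finite case to $H=H_{(J\setminus F)}\oplus\bigoplus_{j\in F}H_j$ forces the distinguished index to lie in $F$ (it cannot be the lumped tail, whose complement is $H_{(F)}$, since the trivialized restriction there would be amenable). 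The same amenability shortcut also disposes of your ``bad case for every $n$'' outright, with no combining argument needed.
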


\begin{proof}
Let $p:G\rightarrow G/M$ be the quotient homomorphism and let $\pi: G/M\rightarrow\mathcal U(\mathcal H)$ be a mixing unitary representation with $\text{H}^2_{\text{b}}(G/M , \pi )\neq 0$. Fix measurable fundamental domains $Y_0, X_0\subset \Omega$ for $G, H$ respectively. 
After possibly replacing $Y_0$ by $g Y_0$, for some $g\in G$, we may assume that $Y_0\cap X_0$ is non-negligible.
Let  $\mu=m(X_0)^{-1}m_{|X_0}$ and let $G\curvearrowright (X_0,\mu)$  be the associated pmp action of $G$ given by $\{g \cdot x\} = G x\cap X_0$, and likewise denote the pmp action $H\curvearrowright (Y_0,\nu)$ by $h \cdot y$. Let $c:H\times Y_0\rightarrow G$ be the associated ME-cocycle given by $c(h,y)hy =h \cdot y$. 


Assume first that $J$ is finite. If $\widetilde\pi=\pi\circ p:\Gamma\rightarrow\mathcal U(\mathcal H)$, then $\text{H}^2_{\text{b}}(\Gamma ,\widetilde\pi)\neq 0$ by \cite[Corollary 3.6]{MS06}. Arguing as in the first part of the proof of \cite[Proposition 5.1]{MS06}, we  find some $j_0 \in J$ and a nonzero measurable function $f:Y_0 \rightarrow \mathcal{H}$ such that $f(h \cdot y) = \widetilde\pi (c(h,y))f(y)$ for all $h\in\widehat{H}_{j_0}$ and almost every $y\in Y_0$. Since $\pi$ is mixing, $\widetilde\pi$ is smooth. Since $G /M$ is ICC,  we can apply Lemma \ref{lem:cocyclered} to obtain a Borel map $\varphi :Y_0\rightarrow G$ with $\varphi (h\cdot y)c(h,y)\varphi (y)^{-1}\in M$ for all $h \in \widehat{H}_{j_0}$ and almost every $y\in Y_0$. Then $Y \coloneqq \{ \varphi (y)y \mid y\in Y_0 \}$ satisfies the conclusion. 
Indeed, for every $\in\widehat{H}_{j_0}$ and almost every $y\in Y_0$,  we have
$$h(\varphi(y)y)=\varphi(y)hy=(\varphi(h\cdot y)c(h,y)\varphi(y)^{-1})^{-1}\varphi(h\cdot y)(h\cdot y)\in MY.$$
Moreover,  for further reference, we note that the cocycle $(p\circ c)_{|\widehat{H}_{j_0}\times Y_0}$ is cohomologous to the trivial homomorphism $\widehat{H}_{j_0}\rightarrow G/M$, hence amenable.

Assume now that $J$ is infinite. 
For a subset $I$ of $J$, we denote by $H_{(I)}$ the subgroup of $H$ comprising elements whose projection to each coordinate outside of $I$ is the identity.
We claim that there is a finite subset $F$ of $J$ such that the cocycle $(p\circ c)_{|H_{(F)}\times Y_0}$  is not amenable.
Otherwise,  Proposition \ref{ess_triv_amenable}(2) implies that the cocycle $p\circ c:H\times Y_0\rightarrow G/M$ is amenable.
By Proposition \ref{ess_triv_amenable}(6), this gives that $G/M$ is amenable, which is a contradiction.


If a finite subset $F\subset J$  is such that the cocycle $(p\circ c)_{|H_{(F)}\times Y_0}$  is not amenable, 
the conclusion follows by applying the finite case to the finite direct sum decomposition $\Lambda = \Lambda _{(J\setminus F)}\oplus \bigoplus _{j\in F}\Lambda _j$.
\end{proof}

\begin{proposition}\label{prop:tsurj}
Let $(\Omega ,m)$ be an ME-coupling of countable groups $G = \bigoplus _{i\in I}G _i$ and $H = \bigoplus _{j\in J} H _j$ with $I$ and $J$ countable. Assume that each $G _i$ is an ICC group in the class $\mathcal{C}$. Then there exists a map $t:I\rightarrow J$ such that for each finite subset $F$ of $I$ there exists a fundamental domain $Y_F\subset \Omega$ for the action of $G$ such that $\widehat{H}_{t(i)}Y_F\subset \widehat{G}_i Y_F$ for all $i\in F$. Such a map $t$ is surjective provided that either (1) $I$ is finite and each $H _j$ is infinite, or (2) $I$ is infinite and each $H _j$ is nonamenable.

Moreover, if each $H_j$ is an ICC group in the class $\mathcal{C}$, then there exists a unique such map $t$, which is bijective, and for each finite subset $Q$ of $J$ there exists a fundamental domain $X_Q\subset \Omega$ for the action of $H$ such that $\widehat{G}_{t^{-1}(j)}X_Q\subset \widehat{H}_j X_Q$ for every $j\in Q$.
\end{proposition}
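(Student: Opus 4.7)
The plan is to apply Proposition~\ref{prop:classCquotient} for each $i\in I$ with the normal subgroup $M = \widehat{G}_i\triangleleft G$, noting that $G/M\cong G_i$ is ICC in class~$\mathcal{C}$. Fixing a $G$-fundamental domain $Y_0\subset \Omega$ with associated ME-cocycle $\beta : H\times Y_0\to G$, this yields, for each $i$, an index $t(i)\in J$ together with a Borel map $\varphi_i : Y_0\to G$ such that $\varphi_i(h\cdot y)\beta(h,y)\varphi_i(y)^{-1}\in \widehat{G}_i$ for every $h\in \widehat{H}_{t(i)}$ and a.e.\ $y$. To handle a finite subset $F\subset I$ simultaneously, I combine these modifications by setting $\varphi_F(y) := \prod_{i\in F} p_i(\varphi_i(y))$, where $p_i : G\to G_i$ is the projection; since the factors $G_i$ commute pairwise inside the direct sum $G$, this product is well-defined and $p_i\circ\varphi_F = p_i\circ\varphi_i$ for every $i\in F$. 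The set $Y_F := \{\varphi_F(y)y \mid y\in Y_0\}$ is then a single $G$-fundamental domain whose modified cocycle has trivial $p_i$-projection on $\widehat{H}_{t(i)}$ for every $i\in F$, giving $\widehat{H}_{t(i)}Y_F\subset \widehat{G}_iY_F$ simultaneously.

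For surjectivity, I argue by contradiction: suppose $j^*\in J\setminus t(I)$, so $H_{j^*}\subset\widehat{H}_{t(i)}$ for every $i\in I$. In case (1) with $|I|<\infty$, taking $F=I$ forces $\beta(h,y)\in \bigcap_{i\in I}\widehat{G}_i = \{e\}$ for $h\in H_{j^*}$, whence $H_{j^*}Y_I\subset Y_I$. The $H$-action on $\Omega$ is essentially free (fixed points of any $h\ne e$ inside the $H$-fundamental domain $X$ lie in $X\cap hX$, which is $m$-null, and an arbitrary fixed point $h'x$ of $h$ gives $x$ fixed by a conjugate of $h$, reducing to the $X$ case), so $H_{j^*}$ admits a measurable $\Omega$-fundamental domain $Z$. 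The partition $Y_I = \bigsqcup_{h\in H_{j^*}}h(Y_I\cap Z)$ then gives $m(Y_I) = |H_{j^*}|\cdot m(Y_I\cap Z) \in \{0,\infty\}$, contradicting $0<m(Y_I)<\infty$. In case (2) with $|I|=\infty$, enumerate $I=\mathbb{N}$; for each $k\geq 1$, the map $\Psi_k(y) := \prod_{i\leq k} p_i(\varphi_i(y))$ trivializes $p_{\leq k}\circ \beta|_{H_{j^*}\times Y_0}$ into the direct summand $\bigoplus_{i\leq k}G_i$ of $G$, so this cocycle is amenable. Proposition~\ref{ess_triv_amenable}(4), applied along the chain of direct summands $\bigoplus_{i\leq k}G_i\nearrow G$, then yields amenability of $\beta|_{H_{j^*}\times Y_0}$, and the symmetric form of Proposition~\ref{ess_triv_amenable}(5) forces $H_{j^*}$ to be amenable, contradicting the hypothesis.

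For the moreover assertion, when each $H_j$ is additionally ICC in~$\mathcal{C}$, applying the first part with the roles of $G$ and $H$ swapped produces a surjective map $s : J\to I$ together with the desired fundamental domains $X_Q$. The remaining task is to show $s\circ t = \mathrm{id}_I$ (hence $t$ bijective and unique), which reduces to uniqueness of $t(i)$ for each $i$: if $j_1\neq j_2$ both satisfied the property for some $i$, the trivializations $\psi_1,\psi_2 : Y_0\to G_i$ of $(p_i\circ\beta)|_{\widehat{H}_{j_k}\times Y_0}$ would agree on $\widehat{H}_{j_1}\cap\widehat{H}_{j_2}$ up to a $(\widehat{H}_{j_1}\cap\widehat{H}_{j_2})$-invariant $G_i$-valued function; passing to the ergodic decomposition of $Y_0$ under this subgroup and adjusting suitably, a common trivialization $\psi$ exists, which the cocycle identity extends to all of $H = \widehat{H}_{j_1}\widehat{H}_{j_2}$. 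This would force $\widehat{G}_iY_0$ to be $H$-invariant, so the $H$-fundamental domain would decompose as $X = \bigsqcup_{g\in G_i}(X\cap g\widehat{G}_iY_0)$ with each piece of equal positive measure, yielding $m(X) = \infty$ (since $G_i$ is infinite as an ICC group), a contradiction. The main technical obstacle is this ergodic-decomposition step in the uniqueness argument: the fiberwise adjustment used to reconcile $\psi_1$ and $\psi_2$ must remain compatible with the trivialization property on each of $\widehat{H}_{j_1}$ and $\widehat{H}_{j_2}$, which requires careful bookkeeping with the quotient action of $H$ on the decomposition space.
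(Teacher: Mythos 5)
Your existence argument (combining the per-index trivializations $\varphi_i$ via $\varphi_F=\prod_{i\in F}p_i(\varphi_i)$) and your two surjectivity arguments are correct and essentially follow the same route as the paper, which delegates these steps to Proposition \ref{prop:classCquotient}, the inductive step of \cite[Proposition 5.1]{MS06}, and Proposition \ref{ess_triv_amenable}; your explicit combination trick and the amenability contradiction in case (2) match the paper's use of parts (4) and (5)/(6) of that proposition.

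The moreover part, however, has a genuine gap. You claim that $s\circ t=\mathrm{id}_I$ ``reduces to uniqueness of $t(i)$ for each $i$,'' but the uniqueness you then argue for concerns only the $G$-side relation $P(i,j)$: ``there is a $G$-fundamental domain $Y$ with $\widehat{H}_jY\subset\widehat{G}_iY$,'' whereas $s$ is defined through the $H$-side relation $Q(j,i)$: ``there is an $H$-fundamental domain $X$ with $\widehat{G}_iX\subset\widehat{H}_jX$.'' Knowing that for each $i$ at most one $j$ satisfies $P(i,j)$ (and symmetrically for $Q$) gives no logical link between the two relations: abstractly, $t$ and $s$ could be arbitrary surjections with uniquely determined values and still fail to be mutually inverse. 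What is needed is a mixed compatibility statement, e.g.\ that $\widehat{G}_{s(j)}X\subset\widehat{H}_jX$ together with $\widehat{H}_{t(s(j))}Y\subset\widehat{G}_{s(j)}Y$ forces $t(s(j))=j$; this is exactly what the paper proves, by translating $Y$ so that $A=X\cap Y$ is non-negligible and applying Poincar\'e recurrence to an infinite subgroup $K<\widehat{H}_{t(s(j))}$ acting on $A$, using both fundamental-domain inclusions to conclude $K\cap\widehat{H}_j\neq\{e\}$. That geometric step is absent from your proposal, so bijectivity (and hence the ``moreover'' assertion) is not established. As a secondary point, your proof of the uniqueness sub-claim itself rests on the unresolved ``adjust suitably on the ergodic decomposition'' step you flag: one must factor the discrepancy $d=\psi_2\psi_1^{-1}$ as a product of an $\widehat{H}_{j_1}$-invariant and an $\widehat{H}_{j_2}$-invariant function, which is not automatic. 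This sub-claim can in fact be obtained painlessly from the paper's toolkit: $p_i\circ\beta$ is essentially trivial on $\widehat{H}_{j_1}$ and on $\widehat{H}_{j_2}$, hence on $\widehat{H}_{j_1}\widehat{H}_{j_2}=H$ by Lemma \ref{product}, hence amenable by Proposition \ref{ess_triv_amenable}(1), and then Proposition \ref{ess_triv_amenable}(6) would make $G_i$ amenable, contradicting $G_i\in\mathcal{C}$. But even with that repair, the missing $P$--$Q$ compatibility argument means the bijectivity claim still does not follow from your outline.
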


\begin{proof}
The existence of a not necessarily surjective map $t:I\rightarrow J$ with the stated property follows from Proposition \ref{prop:classCquotient} and the inductive step in the proof of \cite[Proposition 5.1]{MS06}. The fact that $t$ is surjective under assumption (1) follows exactly as in the proof of surjectivity in \cite[Proposition 5.1]{MS06} (which requires the assumption that each $H _j$ is infinite). 

Suppose that (2) holds, and assume the notation from the proof of Proposition \ref{prop:classCquotient}. Since $H_j$ is nonamenable and $c$ is an ME-cocycle, the cocycle $c{|(H_j\times Y_0)}$ is not amenable by Proposition \ref{ess_triv_amenable}(4). Proposition \ref{ess_triv_amenable}(4) further provides a finite set $F\subset I$ such that the cocycle $q\circ c:H_j\times Y_0\rightarrow G_{(F)}$ is not amenable, where $q:G\rightarrow G_{(F)}$ is the quotient homomorphism.

We claim that $j\in t(F)$. Otherwise, we have
\[
\Lambda _{j}Y_F\subset \bigcap _{i\in F}\widehat{H}_{t(i)}Y_F \subset \bigcap _{i\in F}\widehat{G}_i Y_F = G _{(I\setminus F)}Y_F,
\]
so there exists a measurable map $\varphi : Y_0 \rightarrow G$ such that $\varphi (h \cdot y)c(h,y)\varphi (y)^{-1} \in G _{(I\setminus F)}$ for all $h \in H _{j}$ and a.e. $y\in Y_0$.
Hence $q(\varphi (h \cdot y))q(c (h,y))q(\varphi (y)^{-1})=0$, for all $h\in H_j$ and a.e. $y\in Y_0$. This shows that the cocycle $q\circ\beta:H_j\times Y_0\rightarrow G_{(F)}$ is essentially trivial, which is a contradiction.
The map $t$ is therefore surjective.

The proof of the last statement of the proposition is similar to the second paragraph of the proof of \cite[Theorem 1.16]{MS06}, and does not depend on the surjectivity that we already proved above.  Assume that each $H _j$ is an ICC group in the class $\mathcal{C}$. Then we can find a  map $s:J\rightarrow I$ such that for each finite $Q\subset J$ there is a fundamental domain $X_Q\subset \Omega$ for the action of $H$ with $\widehat{G}_{s(j)}X_Q\subset \widehat{H}_j X_Q$ for all $j\in Q$. 

We claim that $t(s(j))=j$, for every $j\in J$.
Given $j\in J$, we find  fundamental domains $X,Y\subset\Omega$ for the actions of $H,G$, respectively, with
$\widehat{G} _{s(j)}X \subset \widehat{H}_jX$ and 
$\widehat{H}_{t(s(j))}Y\subset \widehat{G}_{s(j)}Y$. 
By translating $Y$ by an element of $G$ if necessary, we may assume without loss of generality that the intersection $A=X\cap Y$ is non-negligible. To show that $t(s(j))=j$, it is enough to show that each infinite subgroup $K<\widehat{H}_{t(s(j))}$ has nontrivial intersection with $\widehat{H}_j$. Indeed, this implies that $H _j$ is not contained in $\widehat{H}_{t(s(j))}$ and hence $t(s(j))=j$. 

Given an infinite subgroup $K$ of $\widehat{H}_{t(s(j))}$, by Poincar\'{e} recurrence there is some nontrivial $h\in K$ such that $h\cdot A\cap A$ has positive measure, and hence there is some $g \in G$ such that $hA\cap g A$ has positive measure. Then
\[
hA\cap g A \subset \widehat{H}_{t(s(j))} Y\cap gY \subset \widehat{G}_{s(j)}Y\cap g Y,
\]
and since $Y$ is a fundamental domain for the action of $G$ it must be that $g \in \widehat{G}_{s(j)}$. Therefore,
\[
hA\cap g A \subset hX \cap \widehat{G}_{s(j)} X\subset hX \cap \widehat{H}_j X.
\]
As $X$ is a fundamental domain for the action of $H$ it must be that $h\in \widehat{H}_j$, so $K\cap \widehat{H}_j$ is nontrivial, as claimed. 

We have shown that $t(s(j))=j$ for all $j\in J$. Switching the roles of $G$ and $H$ we  obtain $s(t(i))=i$ for all $i\in I$, so that $t$ is invertible with inverse $s$. This finishes the proof.
\end{proof}

\begin{theorem}\label{thm:MSinfiniteICCsum}
Suppose $G _i$, $i\in I$, and $H _j$, $j\in J$, are ICC groups in the class $\mathcal{C}$, where $I,J$ are countable. If $G \coloneqq \bigoplus _{i\in I}G _i$ and $H \coloneqq \bigoplus _{j\in J} H _j$ are measure equivalent then $|I|=|J|$ and there exists a bijection $t:I\rightarrow J$ such that $G _i$ is measure equivalent to $H_{t(i)}$ for all $i\in I$.
\end{theorem}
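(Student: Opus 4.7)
The plan is to read off the bijection directly from the moreover clause of Proposition \ref{prop:tsurj}, and then for each $i \in I$ to produce an ME-coupling of $G_i$ with $H_{t(i)}$ by quotienting the given coupling $(\Omega, m)$ by the subgroup $\widehat{G}_i \times \widehat{H}_{t(i)} \subset G \times H$.

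First I would invoke the moreover part of Proposition \ref{prop:tsurj}, applicable because the $H_j$ are also ICC in class $\mathcal{C}$ and in particular nonamenable. This immediately yields $|I| = |J|$, a bijection $t : I \to J$, and for each $i \in I$ a $G$-fundamental domain $Y_i \subset \Omega$ satisfying $\widehat{H}_{t(i)} Y_i \subset \widehat{G}_i Y_i$, and an $H$-fundamental domain $X_i \subset \Omega$ satisfying $\widehat{G}_i X_i \subset \widehat{H}_{t(i)} X_i$. The remaining task is to prove that $G_i$ is measure equivalent with $H_{t(i)}$ for every $i \in I$.

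Fix $i \in I$ and set $j = t(i)$, $N = \widehat{G}_i$, $M = \widehat{H}_j$, so that $G = G_i \oplus N$ and $H = H_j \oplus M$. Since $G$ and $H$ act commutingly and (essentially) freely on $\Omega$ preserving $m$, the countable subgroup $N \times M$ of $G \times H$ acts freely and measure-preservingly, and so admits a Borel fundamental domain $D \subset \Omega$. I would then define $\Omega_i := \Omega/(N \times M)$ equipped with the measure $m_i$ obtained by transporting $m|_D$ along the bijection $D \leftrightarrow \Omega_i$. The actions of $G_i \cong G/N$ and $H_j \cong H/M$ descend to commuting measure-preserving actions on $\Omega_i$, where measure preservation follows from the fact that any $G$- or $H$-translate of $D$ remains a Borel fundamental domain for $N \times M$ and hence has the same $m$-measure in each $(N\times M)$-orbit as $D$ does.

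The central verification is that the image of $Y_i$ in $\Omega_i$ is a fundamental domain of finite measure for $G_i \curvearrowright \Omega_i$, and symmetrically that $X_i$ descends to one for $H_j \curvearrowright \Omega_i$. Covering is immediate from $GY_i = \Omega$ combined with $G = G_i N$. For freeness, suppose $y_1, y_2 \in Y_i$ and $g_i \in G_i$ satisfy $g_i \cdot [y_1] = [y_2]$ in $\Omega_i$; lifting, this reads $g_i y_1 = n h y_2$ in $\Omega$ for some $n \in N$ and $h \in M$. Using the key containment $M Y_i \subset N Y_i$ one can write $h y_2 = n' y_3$ with $n' \in N$ and $y_3 \in Y_i$, yielding $g_i y_1 = (nn') y_3$ with $g_i, nn' \in G$. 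Because $Y_i$ is a $G$-fundamental domain this forces $y_1 = y_3$ and $g_i = nn' \in N$, so $g_i = e$ since $G_i \cap N = \{e\}$. The main obstacle is precisely this freeness argument, as it is the only place where the containments supplied by Proposition \ref{prop:tsurj} are used in an essential way; the remaining verifications are routine applications of Borel fundamental domain machinery for free actions of countable groups on standard measure spaces.
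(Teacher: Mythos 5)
Your first step is exactly the paper's first step: the moreover clause of Proposition \ref{prop:tsurj} gives the bijection $t$ and, for each $i$, fundamental domains $Y_i,X_i$ with $\widehat{H}_{t(i)}Y_i\subset\widehat{G}_iY_i$ and $\widehat{G}_iX_i\subset\widehat{H}_{t(i)}X_i$. The gap is in the second step, at the very first sentence of your construction: the claim that the $(\widehat{G}_i\times\widehat{H}_{t(i)})$-action on $\Omega$ ``acts freely and measure-preservingly, and so admits a Borel fundamental domain $D$'' is false in general. A free measure-preserving action of an infinite countable group need not admit a measurable fundamental domain; indeed it admits one precisely when its orbit equivalence relation is smooth (mod null), and this fails whenever there is a finite-measure invariant piece with infinite orbits, or an ergodic nonatomic piece. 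That is exactly the generic situation here: writing $N=\widehat{G}_i$, $M=\widehat{H}_{t(i)}$, the set $\Omega_0=NY_i\cap MX_i$ is $(N\times M)$-invariant, the $M$-action on $\Omega_0$ has the finite-measure fundamental domain $X_i\cap NY_i$, and a fundamental domain for $N\times M$ on $\Omega_0$ would amount to a measurable fundamental domain for the induced measure-preserving $N$-action on the finite-measure space $X_i\cap NY_i$ --- impossible as soon as that action has infinite orbits (e.g.\ already for a product coupling $\Omega_1\times\Omega_2$ of $G_1\oplus G_2$ and $H_1\oplus H_2$ with $\Omega_2$ an ergodic coupling, taking $i=1$). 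So the space $(\Omega_i,m_i)$ you build on does not exist, and everything downstream (including the freeness check, which is otherwise fine in spirit) has no ground to stand on.

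This is precisely the difficulty that the paper's Proposition \ref{prop:coupling_subgroup_quotient}(2) (following Monod--Shalom) is designed to circumvent, and the paper's proof of the theorem simply cites it: instead of a pointwise quotient by $N\times M$, one works with the $\sigma$-algebra $\mathcal A$ of $(N\times M)$-invariant measurable sets, defines a $(\bar G\times\bar H)$-invariant $\sigma$-finite measure on it using a fundamental domain for one normal subgroup alone (a set of the form $T_NX$, which does exist), and then uses ergodicity and uniqueness of the invariant measure to compare with the analogous measure built from $T_MY$; this comparison is what yields that the images of both $X_i$ and $Y_i$ have finite measure. Note that even if your $D$ existed, this last finiteness point is not the routine verification you suggest --- with an arbitrary $D$ there is no a priori bound on $m(NY_i\cap D)$ --- and it is where ergodicity of the coupling genuinely enters. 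To repair your argument, replace the quotient-by-a-fundamental-domain construction with an appeal to (or a reproof of) Proposition \ref{prop:coupling_subgroup_quotient}(2), applied with $M=\widehat{G}_i\lhd G$ and $N=\widehat{H}_{t(i)}\lhd H$.
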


\begin{proof}
Let $(\Omega ,m)$ be an ME-coupling of $G$ and $H$. Let $t:I\rightarrow J$ be as in Proposition \ref{prop:tsurj}. By that Proposition, $t$ is bijective (hence $|I|=|J|$), and for each $i\in I$ we can find fundamental domains $Y_i,X_i\subset \Omega$ for the action of $G ,H$, respectively, such that $\widehat{H} _{t(i)}Y_i\subset \widehat{G}_iY_i$ and $\widehat{G}_iX_i\subset \widehat{H}_{t(i)}X_i$. Then $G _i \cong G /\widehat{G}_i$ and $H _{t(i)}\cong H /\widehat{H}_{t(i)}$ are measure equivalent by Proposition \ref{prop:coupling_subgroup_quotient}(2) below.
\end{proof}

\begin{proposition}\label{prop:coupling_subgroup_quotient}
Let $G$ and $H$ be countable groups, let $M\triangleleft G$ and $N\triangleleft H$ be normal subgroups, and let $\bar{G}=G/M$ and $\bar{H}= H/N$. Let $(\Omega ,m)$ be an ergodic ME-coupling of $G$ and $H$, and suppose that there exist finite measure fundamental domains $Y, X\subset \Omega$ for the actions of $G$ and $H$, respectively, which satisfy $NY\subset MY$ and $MX\subset NX$. Then
\begin{enumerate}
\item \cite{Ki14a} The groups $M$ and $N$ are measure equivalent. Moreover, after replacing $Y$ by its translate by some element of $G$, the action of $M\times N$ on $(MY\cap NX ,m_{|MY\cap NX} )$ is an ME-coupling of $M$ and $N$.
\item \cite{MS06} The groups $\bar{G}$ and $\bar{H}$ are measure equivalent. Moreover, if  $\mathcal{A}$ is the $\sigma$-algebra of all $(M\times N)$-invariant measurable subsets of $\Omega$, then up to rescaling by a constant there is a unique $(\bar{G}\times\bar{H})$-invariant $\sigma$-finite measure $\nu$ on $\mathcal{A}$ that is equivalent to $m_{|\mathcal{A}}$, and the action of $\bar{G}\times\bar{H}$ on $(\mathcal{A},\nu )$ is an ergodic ME coupling of  $\bar{G}$ and $\bar{H}$.
\end{enumerate}
\end{proposition}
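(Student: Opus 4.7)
The plan is to treat the two parts separately, with part (1) reducing to a direct verification, while part (2) requires a more delicate quotient construction that I expect to be the main obstacle.

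For part (1), I would first observe that the hypotheses $NY\subset MY$ and $MX\subset NX$, together with the normality of $M,N$ and the commutation of the $G$- and $H$-actions, force $MY$ and $NX$ to each be $(M\times N)$-invariant subsets of $\Omega$: for instance, $n\cdot MY=M(nY)\subset M\cdot MY=MY$ for every $n\in N$, and applying $n^{-1}$ gives the reverse inclusion. Hence $MY\cap NX$ is $(M\times N)$-invariant. Since $NX$ is $M$-invariant, the decomposition $MY=\bigsqcup_{m\in M}mY$ restricts to $MY\cap NX=\bigsqcup_{m\in M}m(Y\cap NX)$, exhibiting $Y\cap NX$ as a measurable fundamental domain for the $M$-action on $MY\cap NX$, of measure at most $m(Y)<\infty$; symmetrically, $X\cap MY$ is a fundamental domain for the $N$-action. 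To ensure non-triviality of the coupling, I invoke ergodicity: since $\Omega=\bigsqcup_{g\in G}gY$ and $m(X)>0$, some $g\in G$ satisfies $m(gY\cap X)>0$, and replacing $Y$ by $gY$ makes $Y\cap NX\supset Y\cap X$ and $X\cap MY\supset X\cap Y$ both have positive finite measure, giving the desired ME-coupling of $M$ and $N$.

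For part (2), I would identify $\mathcal A$ with the Borel $\sigma$-algebra of the quotient standard Borel space $\bar\Omega:=\Omega/(M\times N)$ (standard since $G$ and $H$ act freely on $\Omega$ mod null sets, by the existence of finite-measure fundamental domains, hence $M\times N$ does too, with countable orbits). The commuting $G,H$-actions descend to a Borel $(\bar G\times\bar H)$-action on $\bar\Omega$. To construct $\nu$, I choose a Borel fundamental domain $D\subset\Omega$ for $M\times N$ via standard selection theorems and set $\nu(A):=m(A\cap D)$ for $A\in\mathcal{A}$. Using part (1) and ergodicity, $\Omega=\bigcup_{g\in G}g(MY\cap NX)$ modulo null sets, so $D$ decomposes as a countable union of pieces, each contained in an $M$-fundamental domain of some $G$-translate of $MY\cap NX$ and hence of measure at most $m(Y\cap NX)<\infty$; this gives $\sigma$-finiteness. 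Equivalence with $m|_{\mathcal{A}}$ is immediate since $m(A)=0$ iff $m(A\cap D)=0$ for $A\in\mathcal{A}$. One then checks that the images $\bar Y,\bar X\in\bar\Omega$ are fundamental domains for $\bar G,\bar H$ of finite $\nu$-measure; uniqueness of $\nu$ up to scalar and ergodicity of the resulting quotient coupling both reduce to the ergodicity of $G\times H\curvearrowright(\Omega,m)$, since $(\bar G\times\bar H)$-invariant subsets of $\bar\Omega$ pull back to $(G\times H)$-invariant subsets of $\Omega$.

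The main obstacle is proving $(\bar G\times\bar H)$-invariance of $\nu$, equivalently, that $m(A\cap D_1)=m(A\cap D_2)$ for any two Borel $(M\times N)$-fundamental domains $D_1,D_2$ and any $A\in\mathcal{A}$ (which also yields independence of $\nu$ from the choice of $D$). This requires carefully exploiting the free measure-preserving action of $M\times N$ on $(\Omega,m)$ together with the fundamental domain structure from part (1), since the naive pushforward of $m$ to $\bar\Omega$ fails to be $\sigma$-finite and a cross-section argument is essential.
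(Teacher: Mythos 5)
Your part (1) is correct and is essentially the paper's argument: $MY$ and $NX$ are $(M\times N)$-invariant, $Y\cap NX$ and $X\cap MY$ are fundamental domains for $M$ and $N$ on $MY\cap NX$, and a translation of $Y$ makes the intersection non-null (ergodicity is not even needed for that last step, only that the $G$-translates of $Y$ cover $\Omega$ and $m(X)>0$).

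Part (2), however, has a genuine gap at its foundation: you build everything on ``a Borel fundamental domain $D\subset\Omega$ for $M\times N$ via standard selection theorems,'' and on the claim that $\bar\Omega=\Omega/(M\times N)$ is a standard Borel space because the action is essentially free with countable orbits. Both claims are false in general. A countable Borel equivalence relation admits a Borel transversal (even just modulo null sets) only when it is smooth (mod null), and freeness with countable orbits does not give smoothness; likewise the quotient of a standard space by a countable Borel equivalence relation is standard essentially only in the smooth case. In the present situation smoothness typically fails: by your own part (1), $M\times N$ acts on $MY\cap NX$ as an ME-coupling of $M$ and $N$, and whenever this action (or any non-null invariant piece of the $(M\times N)$-action on $\Omega$) is ergodic with infinite orbits on a non-atomic measure, no measurable fundamental domain can exist --- the map sending a point to the unique point of $D$ in its orbit would be an invariant Borel map to a standard space, hence a.e.\ constant, forcing a single countable orbit to be conull. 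Since in the intended applications $M=A^{(B)}$ and $N=C^{(D)}$ are infinite and nothing rules out such ergodic behaviour, the measure $\nu(A)=m(A\cap D)$ is simply not defined, and with it the invariance, $\sigma$-finiteness and fundamental-domain verifications that you base on $D$ collapse. (Your $\sigma$-finiteness argument also quietly assumes that $\bigcup_{g\in G}g(MY\cap NX)$ is $(G\times H)$-invariant, which it need not be, but that is a minor fixable point by saturating over $G\times H$.)

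The paper avoids the need for any transversal of the full $M\times N$-action: it defines $\nu$ on $\mathcal A$ by $\nu(Z)=m(Z\cap T_NX)$, where $T_N\subset H$ is a set of representatives for the right cosets of $N$, so that $T_NX$ is a fundamental domain for the action of $N$ \emph{alone} --- and such a fundamental domain always exists because $X$ is one for $H$. Well-definedness and $(\bar G\times\bar H)$-invariance then follow because sets in $\mathcal A$ are $N$-invariant and any two $N$-fundamental domains give the same mass to an $N$-invariant set (here normality of $N$ in $H$ and the commutation of the $G$- and $H$-actions are what make $g^{-1}T_NX$ and $h^{-1}T_NX$ again $N$-fundamental domains); $\nu(NX)=m(X)<\infty$ is immediate, and the finiteness of $\nu(MY)$ is obtained by comparing with the analogous measure $\nu'$ built from $T_MY$ and invoking the uniqueness-up-to-scalar statement, which, as you correctly indicate, comes from ergodicity of $G\times H\curvearrowright(\Omega,m)$. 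If you want to salvage your approach, you should work directly with the $\sigma$-algebra $\mathcal A$ (or its measure algebra) as the paper does, and replace your $D$ by a fundamental domain for one of the subgroups $N$ or $M$ only.
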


\begin{proof}
(1) After replacing $Y$ by $gY$, for some $g\in G$, we may assume that $m(Y\cap X) >0$ and hence that $m(MY\cap NX)>0$. Let $\Omega _0 = MY\cap NX$ and let $m _0 = m _{|\Omega _0}$. Since $NY\subset MY$ and $MX\subset NX$, both $MY$ and $NX$ are $(M\times N)$-invariant, and hence so is $\Omega _0$. The set $X\cap MY$ has finite measure since $X$ does, and is a fundamental domain for the action of $N$ on $\Omega _0$. Indeed, $h (X\cap MY)$, $h \in N$, are pairwise disjoint since $X$ is a fundamental domain for the action of $N$ on $\Omega$, and cover $\Omega _0$ since $N(X\cap MY)=\bigcup _{h \in N}h X\cap h MY = \bigcup _{h\in N}h X\cap MY = \Omega _0$ by the $N$-invariance of $MY$. Similarly, $NX\cap Y$ is a finite measure fundamental domain for the action of $M$ on $\Omega _0$, so $(\Omega _0,m _0 )$ is an ME-coupling of $M$ and $N$.

(2) The ergodic $(G \times H)$-action on $(\Omega ,m)$ descends to an ergodic $(\bar{G}\times\bar{H})$-action on $(\mathcal{A},m_{|\mathcal{A}})$. The restricted measure $m_{|\mathcal{A}}$ is not $\sigma$-finite if $N$ (equivalently, $M$) is infinite, but by selecting a set $T_N\subset H$ of representatives for the right cosets of $N$ in $H$ we obtain a $(\bar{G}\times\bar{H})$-invariant $\sigma$-finite measure $\nu$ on $\mathcal{A}$ that is equivalent to $m_{|\mathcal{A}}$  given by
$\nu (Z)= \mu _{\Omega}(Z\cap T_NX)$,
for $Z\in \mathcal{A}$. 
Since $\nu$ is equivalent to $m_{|\mathcal{A}}$, the $(\bar{G}\times\bar{H})$-action on $(\mathcal{A},\nu )$ is ergodic, and hence, up to rescaling by a constant, $\nu$ is the unique $(\bar{G}\times\bar{H})$-invariant $\sigma$-finite measure that is equivalent to $m_{|\mathcal{A}}$. 
Since $MX\subset NX$, we get that $NX$ belongs to $\mathcal{A}$. Since $\nu (NX)=m(X)<\infty$, we also get that  $NX$ is a finite measure fundamental domain for the action of $\bar{H}$ on $(\mathcal{A}, \nu )$. 

On the other hand, selecting a set $T_M\subset G$ of representatives for the right cosets of $M$ in $G$ we get a $(\bar{G}\times\bar{H})$-invariant $\sigma$-finite measure $\nu '$ on $\mathcal{A}$ that is equivalent to $m_{|\mathcal{A}}$, defined by
$\nu ' (Z)= \mu _{\Omega}(Z\cap T_MY)$, for $Z\in\mathcal A$. 
As above we see that $MY$ is a $\nu '$-finite measure fundamental domain for the action of $\bar{G}$ on $(\mathcal{A}, \nu ' )$. Since $\nu '$ and $\nu$ are equivalent measures, $\nu '$ is a scalar multiple of $\nu$, so $\nu (MY)<\infty$. Thus, $(\mathcal{A},\nu )$ is an ME-coupling of $\bar{\Gamma}$ and $\bar{\Lambda}$.
\end{proof}

We end this section with the following consequence of Lemma \ref{lem:cocyclered}.

\begin{lemma}\label{lem:ME_ICC_quotient}
Let $G, H$ be countable groups with normal subgroups $M\triangleleft G, N\triangleleft H$. Let $(\Omega,m)$ be an ME-coupling of $G$ and $H$,  $Y_0\subset \Omega$ a fundamental domain for the action of $G$, and $c:H\times Y_0\rightarrow G$  the associated ME-cocycle. 
Let  $\bar{G}=G/M$ and denote by $\bar{c} :H\times Y_0\rightarrow \bar{G}$ the composition of $c$ with the projection map $g\mapsto\bar{g}$ from $G$ to $\bar{G}$. 
Assume that the cocycle $\bar{c}|(N\times Y_0)$ is essentially trivial and $\bar{G}$ is ICC.

Then there is a fundamental domain $Y\subset \Omega$ for the action of $G$ such that $NY\subset MY$. 
\end{lemma}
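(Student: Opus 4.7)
The plan is to apply Lemma \ref{lem:cocyclered} with the roles of $(G,M)$ and $(H,N)$ interchanged, taking as input the pmp action $H\curvearrowright (Y_0,\nu)$ coming from the ME-coupling together with the cocycle $c:H\times Y_0\rightarrow G$. The essential triviality of $\bar c|(N\times Y_0)$ provides a Borel $\bar c$-equivariant map $\psi:Y_0\rightarrow\mathcal F(\bar G)$, and I will feed this into the lemma as the map $f$, after interpreting $\mathcal F(\bar G)$ as a smooth Borel $G$-space via the projection $p:G\rightarrow\bar G$.

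To set this up, I observe that $\mathcal F(\bar G)$ is countable, so the $G$-orbit relation on it is automatically smooth. The required $c$-equivariance of $\psi$ for the $N$-action comes for free: $c(h,y)\cdot\psi(y)=p(c(h,y))\,\psi(y)=\bar c(h,y)\psi(y)=\psi(h\cdot y)$, for every $h\in N$ and a.e.\ $y$. Next I verify hypotheses (2) and (3) of Lemma \ref{lem:cocyclered}. For each $F\in\mathcal F(\bar G)$, the image in $\bar G$ of the $G$-stabilizer of $F$ equals the $\bar G$-stabilizer of $F$, which is contained in $FF^{-1}$ and hence finite; and the assumption that $\bar G$ is ICC rules out nontrivial finite IRSs, as recorded in the remarks following Lemma \ref{lem:cocyclered}. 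The lemma then produces a Borel map $\varphi:Y_0\rightarrow G$ satisfying $\varphi(h\cdot y)c(h,y)\varphi(y)^{-1}\in M$ for every $h\in N$ and a.e.\ $y\in Y_0$.

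The final step is to set $Y:=\{\varphi(y)y\mid y\in Y_0\}$, which is again a $G$-fundamental domain because $\varphi$ takes values in $G$. Using the identity $hy=c(h,y)^{-1}(h\cdot y)$ together with the commutation of the $G$- and $H$-actions on $\Omega$, the same calculation as at the end of the proof of Proposition \ref{prop:classCquotient} gives
$$h(\varphi(y)y)=\varphi(y)hy=\bigl(\varphi(h\cdot y)c(h,y)\varphi(y)^{-1}\bigr)^{-1}\varphi(h\cdot y)(h\cdot y)\in MY,$$
which proves $NY\subset MY$ as required. I expect the only delicate point to be recognizing that $\mathcal F(\bar G)$ equipped with the inflated $G$-action fits all the hypotheses of Lemma \ref{lem:cocyclered}; once smoothness and the finiteness of stabilizers modulo $M$ are verified, the rest of the argument is essentially the same bookkeeping that appears at the end of the proof of Proposition \ref{prop:classCquotient}.
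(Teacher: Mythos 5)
Your proposal is correct and follows essentially the same route as the paper: one lets $G$ act on $\mathcal F(\bar G)$ through the projection (a smooth action whose stabilizers project to finite subgroups of $\bar G$), feeds the $\bar c$-equivariant map coming from essential triviality into Lemma \ref{lem:cocyclered} with the roles of $(G,M)$ and $(H,N)$ interchanged (ICC ruling out nontrivial finite IRSs), and then translates the fundamental domain by the resulting $\varphi$ exactly as in the computation at the end of Proposition \ref{prop:classCquotient}. Your extra verifications (smoothness of the countable $G$-space and the stabilizer bound via $FF^{-1}$) are details the paper states without proof, and they are correct.
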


\begin{proof}
Consider the action of $G$ on  $\mathcal{F}(\bar{G})$ given by $g\cdot F=\bar{g}F$. Then each stabilizer of this action of $G$ projects to a finite subgroup of $\bar{G}$. Since $\bar{c}|(N\times Y_0)$ is essentially trivial, there exists a measurable map $f:Y_0\rightarrow \mathcal{F}(\bar{G})$ satisfying $f(h\cdot y) = c (h,y)\cdot f(y)$, for every $h\in N$ and almost every $y\in Y_0$. 

Since $\bar{G}$ is ICC, the hypothesis of Lemma \ref{lem:cocyclered} is satisfied, so there exists a Borel map $\varphi :Y_0\rightarrow G$ such that $\varphi (h\cdot y)c(h,y)\varphi (y)^{-1} \in M$, for every $h\in N$ and almost every $y\in Y_0$. Define $Y= \{ \varphi (y)y | y\in Y_0 \}$.
Then $Y$ is a fundamental domain for the action of $G$ on $\Omega$, and for  every $h\in H$ and almost every $y\in Y_0$ we have
\[
h\varphi (y)y = \varphi (y)hy=\varphi (y)c(h, y)^{-1}(h\cdot y) = (\varphi (h\cdot y)c(h,y)\varphi (y)^{-1})^{-1}\varphi (h\cdot y)(h\cdot y).
\]
Therefore, $NY\subset MY$, as claimed.
\end{proof}

\section{Pairwise non-ME groups in the class $\mathcal C$}

In this section, we establish another ingredient needed to prove Theorem \ref{A} by constructing an infinite family of pairwise non-ME groups in class $\mathcal C$ which can  be generated by a fixed number of generators. 

\begin{proposition}\label{prop:direct_sum_family}
There exists a continuum sized family $\{A_c\}_{c\in I}$  of finitely generated ICC groups in the class $\mathcal{C}$ which are pairwise not measure equivalent. 
\end{proposition}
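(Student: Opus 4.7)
The plan is to use Gaboriau's theorem that $\ell^2$-Betti numbers are measure equivalence invariants up to a positive scalar multiple \cite{Ga02}, together with a construction of continuum many finitely generated ICC class-$\mathcal{C}$ groups whose $\ell^2$-Betti number sequences are pairwise non-proportional.

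First, I would fix a sequence of finitely generated ICC groups $\{H_n\}_{n\geq 1}$ in class $\mathcal{C}$ with sharply distinguishable $\ell^2$-Betti number profiles --- for concreteness, one could take $H_n$ to be a torsion-free uniform lattice in a rank-one simple Lie group of real rank chosen to concentrate the $\ell^2$-Betti numbers in a single known degree $d_n$, or a fundamental group of a suitable closed aspherical manifold with hyperbolic fundamental group, so that $\beta_{d_n}^{(2)}(H_n)>0$ and all other Betti numbers of $H_n$ vanish, with the degrees $d_n$ pairwise distinct.

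Second, for each infinite subset $S\subseteq \mathbb{N}$ (a continuum-sized family), I would build a \emph{finitely generated} ICC group $A_S\in\mathcal{C}$ whose nonvanishing $\ell^2$-Betti number degrees are exactly $\{d_n: n\in S\}$. Since a naive direct sum $\bigoplus_{n\in S}H_n$ records $S$ faithfully in its Betti profile but is not finitely generated, this step requires a more subtle packaging: one would use a wreath-like amalgamation (exploiting the technology recalled in Subsection~\ref{Wreath}) with a fixed finitely generated non-elementary torsion-free hyperbolic base $B$, encoding the parameter $S$ in the choice of a single finitely generated homomorphic image of $\bigoplus_{n\in S}H_n$ (for example as a quotient of a free group of bounded rank) sitting inside a f.g.\ extension of $B$.

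Third, ICC and class $\mathcal{C}$ follow from the structure of the construction: the wreath-like (or quotient) structure over a non-elementary hyperbolic base $B$ is ICC as soon as $B$ is, and is in class $\mathcal{C}$ because a mixing unitary representation of $B$ with nonvanishing $\mathrm{H}^2_{\mathrm{b}}$ pulls back to $A_S$ via the quotient homomorphism $A_S\twoheadrightarrow B$ (see \cite[Propositions~3.4 and 3.6]{MS06}). Finally, if $A_S$ and $A_{S'}$ were measure equivalent, then by \cite{Ga02} the sequences $(\beta_k^{(2)}(A_S))_{k\geq 1}$ and $(\beta_k^{(2)}(A_{S'}))_{k\geq 1}$ would be proportional, forcing their supports --- and hence $S$ and $S'$ --- to coincide.

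The main obstacle is the second step: realizing continuum many pairwise non-proportional $\ell^2$-Betti number profiles inside \emph{finitely generated} ICC class-$\mathcal{C}$ groups, given that infinite direct sums of nontrivial groups are never finitely generated. This is essentially the technical heart of the Fournier-Facio--Sun construction \cite{FS} mentioned in the introduction; verifying that an analogous construction keeps the resulting groups inside class $\mathcal{C}$ (which, unlike property (T), is not obviously preserved by all the needed operations) is the delicate part that I expect to require the most care.
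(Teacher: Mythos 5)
Your choice of invariant --- the proportionality of $\ell^2$-Betti numbers under measure equivalence \cite{Ga02} --- is exactly the one the paper uses, but your plan for realizing continuum many distinguishable profiles has a genuine gap, and it is located precisely where you flag it: step two is not an implementation detail but the entire content of the proposition, and you do not carry it out. Worse, the specific mechanism you propose would fail. If you package $\bigoplus_{n\in S}H_n$ as (a quotient of) the base of a wreath-like product over an infinite hyperbolic group $B$, the K\"unneth formula forces $\beta_p^{(2)}\bigl(\bigl(\bigoplus_{n\in S}H_n\bigr)^{(B)}\bigr)=0$ for all $p$, and then the extension argument (used in the paper's own proof of Theorem~\ref{A} via \cite[Theorem 7.2(6)]{Luck}) gives $\beta_p^{(2)}=0$ for the ambient group in every degree --- the Betti profile of $S$ is erased rather than recorded. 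Passing instead to ``a single finitely generated homomorphic image of $\bigoplus_{n\in S}H_n$'' destroys any control over $\ell^2$-Betti numbers, since they do not behave functorially under quotients. A further problem: a mixing representation of $B$ pulled back along $A_S\twoheadrightarrow B$ is no longer mixing (the infinite kernel acts trivially), so this does not place $A_S$ in class $\mathcal{C}$; membership in $\mathcal{C}$ requires a mixing representation of $A_S$ itself with nonvanishing $\mathrm{H}^2_{\mathrm{b}}$.

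The paper's route is far simpler and sidesteps all of this: you do not need continuum many \emph{supports}, only continuum many \emph{ratios}. Gaboriau's fixed-price construction \cite[Proposition VI.16]{Ga00} already produces, for each irrational $c\in(1,1+\tfrac18)$, a finitely generated group $\Gamma_c$ with $\beta_1^{(2)}(\Gamma_c)=c-1$ and vanishing higher $\ell^2$-Betti numbers. Setting $A_c=\Gamma_c\ast(\mathbb F_2\times\mathbb F_2)$ normalizes by planting $\beta_2^{(2)}(A_c)=1$, so the ME-invariant ratio $\beta_1^{(2)}/\beta_2^{(2)}=c$ separates the continuum; and $A_c$, being a free product of infinite groups, is automatically ICC and in class $\mathcal{C}$ by \cite[Theorem 1.3]{MS06}, with no wreath-like machinery needed. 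The hard encoding problem you identify (and defer to \cite{FS}) is genuinely difficult, but it is not needed here.
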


\begin{proof}
In the proof of \cite[Proposition VI.16]{Ga00} Gaboriau constructs, for each irrational number $c\in (1,1+\tfrac{1}{8})$, a finitely generated group $\Gamma _c$ of fixed price $c$ which is an amalgam of the form  $\Gamma _c=J_c\ast _{K_c} H_c$, where the group $J_c$ is strongly treeable of fixed price $c$, both $H_c$ and $K_c$ are infinite amenable groups, and $K_c$ has infinite index in both $J_c$ and $H_c$. 
Thus, $\beta_n^{(2)}(H_c)=\beta_n^{(2)}(K_c)=0$, for every $n\geq 1$, $\beta_1^{(2)}(J_c)=c-1$ and $\beta_n^{(2)}(J_c)=0$, for every $n\geq 2$ (see \cite[Corollaire 3.23 and Proposition 6.10]{Ga02}).
From this and \cite{CG} it follows that $\beta ^{(2)}_1 (\Gamma _c ) = c-1$ and $\beta ^{(2)}_n (\Gamma _c)=\beta ^{(2)}_n(J _c) + \beta ^{(2)}_n(H_c) = 0$, for every $n\geq 2$. 

Let $\mathbb F_2$ denote the free group on two generators and let $A _c = \Gamma _c \ast (\mathbb F_2\times \mathbb F_2 )$. Then $\beta ^{(2)}_1(A _c)=c$, $\beta ^{(2)}_2(A _c)=1$, and $\beta ^{(2)}_n(A_c) = 0$ for $n\geq 3$. It follows from \cite{Ga02} that the groups $A _c$ are pairwise not measure equivalent. 
The groups $A _c$, each being a free product of infinite groups, are all ICC and belong to the class $\mathcal{C}$ (by \cite[Theorem 1.3]{MS06}). 
\end{proof}

Since the family $\{A_c\}_{c\in I}$ is uncountable, we can find  $n\geq 2$ and an uncountable subfamily $\{A_c\}_{c\in J}$ such that $A_c$ can be generated by $n$ elements, for every $c\in J$. This will be enough to prove the main assertion of Theorem \ref{A}. However, since the groups $A_c$ contain torsion,  in order to derive the moreover part of Theorem \ref{A} we will need the following:

\begin{proposition}\label{torsion-free-family}
There exists a countably infinite family $\{B_d\}_{d\geq 1}$  of torsion-free, $4$-generated ICC groups in the class $\mathcal{C}$ which are pairwise not measure equivalent. 
\end{proposition}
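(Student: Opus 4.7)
The plan is to produce, for each $d\ge 1$, a torsion-free, $4$-generated group $B_d$ that is ICC, lies in class $\mathcal C$, and is distinguished from $B_{d'}$ (for $d\neq d'$) by an ME-invariant, from which the pairwise non-measure equivalence will follow. A natural template is to carry out the construction of Proposition \ref{prop:direct_sum_family} in the torsion-free regime: pick distinct reals $c_1, c_2, \ldots \in (1, 1+\tfrac{1}{8})$ and try to realize each $c_d$ as $\beta_1^{(2)}(B_d)$, while simultaneously arranging $\beta_2^{(2)}(B_d)=1$ and $\beta_n^{(2)}(B_d)=0$ for $n\geq 3$, so that the ratio $\beta_2^{(2)}/\beta_1^{(2)}=1/c_d$ is an ME-invariant by Gaboriau's proportionality theorem \cite{Ga02} that separates the $B_d$.

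The first step is to construct, for each $d$, a torsion-free, finitely generated amalgam $\Gamma_d = J_d *_{K_d} H_d$ of fixed price $c_d$ in which $K_d = \mathbb{Z}$, $H_d = \mathbb{Z}^2$, and $J_d$ is a torsion-free strongly treeable group of fixed price $c_d$ containing $\mathbb{Z}$ as an infinite-index subgroup; here the torsion-containing treeable pieces appearing in Gaboriau's original construction are replaced by iterated amalgams of free groups and $\mathbb{Z}^n$'s over cyclic subgroups. By Gaboriau's cost formula for amalgams over amenable subgroups, together with the $\ell^2$-Betti number calculations recalled in the proof of Proposition \ref{prop:direct_sum_family} from \cite[Corollaire~3.23 and Proposition~6.10]{Ga02}, the resulting $\Gamma_d$ is torsion-free of fixed price $c_d$ with $\beta_1^{(2)}(\Gamma_d)= c_d -1$ and $\beta_n^{(2)}(\Gamma_d)=0$ for $n\geq 2$. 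The second step is to set $B_d := \Gamma_d * (\mathbb{F}_2 \times \mathbb{F}_2)$: as a free product of infinite groups it lies in class $\mathcal C$ by \cite[Theorem~1.3]{MS06}, it is torsion-free, and the free-product formula for $\ell^2$-Betti numbers yields $\beta_1^{(2)}(B_d)=c_d$, $\beta_2^{(2)}(B_d)=1$, and $\beta_n^{(2)}(B_d)=0$ for $n\geq 3$. Torsion-freeness plus class $\mathcal C$ gives ICC by \cite[Proposition~7.11]{MS06}, and distinct $c_d$ yield pairwise non-ME groups by Gaboriau's proportionality.

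The main obstacle is the generator count: as constructed, $B_d$ has rank at most $\mathrm{rank}(\Gamma_d) + 4$, which typically exceeds $4$. One must therefore refine the construction so that the four generators of the $\mathbb{F}_2 \times \mathbb{F}_2$ factor can be shared with generators of $\Gamma_d$, for instance by embedding $\Gamma_d$ as a carefully chosen subgroup of an HNN extension of $\mathbb{F}_2 \times \mathbb{F}_2$, or by realizing the amalgam $\Gamma_d$ inside one free factor of $\mathbb{F}_2 \times \mathbb{F}_2$ and then quotienting out the redundant relations, or by fixing a single $4$-generated torsion-free ICC group in $\mathcal C$ and producing the $B_d$'s as carefully chosen quotients or amalgamated products of copies of it along cyclic subgroups designed to realize the prescribed prices. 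Carrying out this simultaneous control of torsion-freeness, class $\mathcal C$ membership, the $4$-generator bound, and distinct prescribed $\ell^2$-Betti numbers is the key technical step, and is essentially a more careful bookkeeping of Gaboriau's amalgamation arguments within a fixed $4$-generated ambient group.
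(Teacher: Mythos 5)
There is a genuine gap, and it sits exactly at the step you flag as "the first step." Your plan needs torsion-free, finitely generated groups whose first $\ell^2$-Betti number (equivalently, for the treeable piece $J_d$, whose fixed price minus one) takes prescribed irrational values, and you propose to get them by replacing Gaboriau's torsion-containing pieces with "iterated amalgams of free groups and $\mathbb{Z}^n$'s over cyclic subgroups." This is not a bookkeeping issue: groups built from free groups and $\mathbb{Z}^n$'s by amalgamation over (infinite) cyclic subgroups lie in Linnell's class, where the strong Atiyah conjecture is known, so all their $\ell^2$-Betti numbers are integers; for a treeable group one has $\mathrm{cost}=1+\beta_1^{(2)}$, so such a $J_d$ can never have irrational fixed price $c_d$. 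More broadly, every known finitely generated group with an irrational $\ell^2$-Betti number contains torsion (lamplighter-type constructions), and producing a torsion-free finitely generated example would essentially refute the Atiyah conjecture -- far beyond what can be waved through here. The torsion in Gaboriau's Proposition VI.16 is therefore not incidental, which is precisely why the paper does not attempt a torsion-free version of Proposition \ref{prop:direct_sum_family}. Your second acknowledged obstacle, the $4$-generator bound, is also left unresolved: the suggested fixes (passing to subgroups of HNN extensions, quotienting out relations, amalgamating copies of a fixed group) would destroy the very control of $\ell^2$-Betti numbers, fixed price, and class $\mathcal{C}$ membership on which the separation argument depends.

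Note also that the statement only asks for countably many groups, so you do not need a continuum of real-valued invariants at all; this is what makes a much simpler route possible, and it is the one the paper takes. The paper sets $C_d=(\bigoplus_{i=1}^d\mathbb{F}_2)\rtimes\mathbb{Z}$ (cyclic permutation of the factors), a torsion-free $3$-generated group, and distinguishes the $C_d$ by the integer-valued ME-invariant approximate ergodic dimension, which equals $d$ because $\beta_d^{(2)}(\bigoplus_{i=1}^d\mathbb{F}_2)=1$. It then takes $B_d=C_d\ast\mathbb{Z}$, which is torsion-free, $4$-generated, ICC, and in class $\mathcal{C}$ by \cite[Theorem 1.3]{MS06}; since $\beta_1^{(2)}(B_d)=0$, the free-product ME rigidity theorem \cite[Theorem 1.5]{AG} reduces non-measure equivalence of the $B_d$ to that of the $C_d$. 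If you want to salvage your approach, you would have to replace the irrational-cost mechanism by an integer-valued invariant in this way; as written, the construction of the groups $\Gamma_d$ cannot work.
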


In preparation for the proof of this result, recall from \cite{Ga02} that the {\it approximate ergodic dimension} $\text{a-erg-dim}(\Gamma )$ of a countable group $\Gamma$ is the least $d\in \N \cup \{ \aleph _0 \}$ such that there exists a free pmp action $\Gamma \curvearrowright (X,\mu )$ whose orbit equivalence relation $\mathcal{R}$ may be expressed as an increasing union $\mathcal{R}=\bigcup _{i\in \N}\mathcal{R}_i$ of Borel subequivalence relations such that each $\mathcal{R}_i$ admits a discrete Borel simplicial action on a Borel bundle of contractible simplicial complexes of dimension $d$. 
The approximate ergodic dimension is a measure equivalence invariant.

\begin{proof}
Let $d\geq 1$ and consider the action of $\Z$ on the direct sum $\bigoplus _{i=1}^d \mathbb F_2$ of $d$ copies of $\mathbb F_2$  via a cyclic permutation of the direct factors. Let $C _d\coloneqq (\bigoplus_{i=1}^d \mathbb F_2)\rtimes \Z$ be the corresponding semi-direct product, which is a torsion-free, 3-generated group with approximate ergodic dimension $d$ by \cite{Ga02}.
Indeed, the inequality $\text{a-erg-dim}(C _d)\leq d$ is straightforward. We also have $d\leq \text{a-erg-dim}(\bigoplus _{i=1}^d \mathbb F_2)\leq \text{a-erg-dim}(C _d)$ where the first inequality follows from \cite[Corollaire 5.13]{Ga02} since $\beta ^{(2)}_d(\bigoplus _{i=1}^d \mathbb F_2)=1$, and the second inequality follows from $\bigoplus _{i=1}^d \mathbb F_2$ being a subgroup of $C_d$.
Thus, the groups $C _d$, $d\geq 1$,  are pairwise not measure equivalent.

Finally, let $B_d=C_d*\mathbb Z$, for every $d\geq 1$. Then $B_d\in \mathcal C$ by \cite[Theorem 1.3]{MS06} and $B_d$ is torsion-free and ICC for every $d\geq 1$. Moreover, since  $\beta_1^{(2)}(C_d)=0$ for every $d\geq 1$ by \cite[Th\'{e}or\`{e}me 6.8]{Ga02}, applying \cite[Theorem 1.5]{AG} implies that the groups $\{B_d\}_{d\geq 1}$ are pairwise not measure equivalent.
\end{proof}

\section{Proof of Theorem \ref{A}}

We are now ready to prove Theorem \ref{A}. We first establish the following:

\begin{theorem}\label{descend}
Let $A$ and $C$ be any countable groups,  and let $B$ and $D$ be ICC nonamenable subgroups of hyperbolic groups.
Assume that $G\in\mathcal W\mathcal R(A,B)$ and $H\in \mathcal{W}\mathcal{R}(C,D)$ are measure equivalent groups. Then the groups $A^{(B)}$ and $C^{(D)}$ are measure equivalent.
\end{theorem}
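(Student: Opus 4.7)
My strategy is to reduce the theorem to Proposition \ref{prop:coupling_subgroup_quotient}(1), applied to the normal subgroups $M = A^{(B)} \lhd G$ and $N = C^{(D)} \lhd H$. Concretely, starting from an ME-coupling $(\Omega, m)$ of $G$ and $H$, I will produce fundamental domains $Y \subset \Omega$ for the $G$-action and $X \subset \Omega$ for the $H$-action satisfying $C^{(D)} Y \subset A^{(B)} Y$ and $A^{(B)} X \subset C^{(D)} X$. By passing to an ergodic component of the $(G \times H)$-action beforehand, I may assume that $(\Omega, m)$ is ergodic from the outset.

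To produce $Y$, fix any fundamental domain $Y_0 \subset \Omega$ for $G$ and consider the associated ME-cocycle $c : H \times Y_0 \to G$. Composing with the quotient $p : G \to G/A^{(B)} = B$ yields $\bar c = p \circ c : H \times Y_0 \to B$. Since $H \in \mathcal W\mathcal R(C,D)$, the group $B$ is hyperbolic, and the $H$-action on $Y_0$ is ergodic (as $(\Omega,m)$ is an ergodic coupling), Corollary \ref{CR2} applies and yields the dichotomy: either $\bar c$ is amenable, or $\bar c|(C^{(D)} \times Y_0)$ is essentially trivial. The first alternative is incompatible with the hypotheses, since Proposition \ref{ess_triv_amenable}(6) would then force $B$ to be amenable, whereas ICC hyperbolic groups are nonamenable (the only infinite amenable hyperbolic groups are virtually $\mathbb{Z}$, which are not ICC). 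Hence $\bar c|(C^{(D)} \times Y_0)$ is essentially trivial, and because $B = G/A^{(B)}$ is ICC, Lemma \ref{lem:ME_ICC_quotient} supplies a $G$-fundamental domain $Y \subset \Omega$ with $C^{(D)} Y \subset A^{(B)} Y$.

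A symmetric argument, starting with a fundamental domain $X_0 \subset \Omega$ for $H$, forming the ME-cocycle $d : G \times X_0 \to H$, composing with the quotient $q : H \to H/C^{(D)} = D$, and again applying Corollary \ref{CR2} (with the amenable alternative ruled out by the ICC hyperbolicity of $D$) and Lemma \ref{lem:ME_ICC_quotient}, produces $X \subset \Omega$ with $A^{(B)} X \subset C^{(D)} X$. Applying Proposition \ref{prop:coupling_subgroup_quotient}(1) to these $Y$ and $X$ then exhibits $A^{(B)}$ and $C^{(D)}$ as measure equivalent. The main obstacle is the dichotomy step: the hyperbolicity of the quotient groups $B$ and $D$ is what makes Corollary \ref{CR2} (and behind it Theorem \ref{CR}) applicable, while the wreath-like structure of $G$ and $H$ guarantees that the essentially trivial alternative lands precisely on the ``base'' normal subgroups $C^{(D)} \lhd H$ and $A^{(B)} \lhd G$ whose measure equivalence we wish to conclude.
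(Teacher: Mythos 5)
Your proposal is correct and follows essentially the same route as the paper's own proof: an ergodic coupling, the ME-cocycle $c:H\times Y_0\to G$ composed with the quotient to $B$, the dichotomy of Corollary \ref{CR2} with the amenable branch excluded via Proposition \ref{ess_triv_amenable}(6) and the nonamenability of ICC hyperbolic groups, Lemma \ref{lem:ME_ICC_quotient} to produce the fundamental domain $Y$ with $C^{(D)}Y\subset A^{(B)}Y$, the symmetric argument for $X$, and finally Proposition \ref{prop:coupling_subgroup_quotient}(1). No gaps; this matches the published argument step for step.
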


\begin{proof}
Let $(\Omega ,m)$ be an ergodic ME-coupling of $G$ and $H$, $Y_0\subset \Omega$ a fundamental domain for the action of $G$ and let $H\curvearrowright (Y_0,\nu)$ be the associated ergodic pmp action, where $\nu=m(Y_0)^{-1}m_{|Y_0}$.
Let $c:H\times Y_0\rightarrow G$ be the associated ME-cocycle and $\bar{c}:H\times Y_0\rightarrow G/A^{(B)}$ be the composition of $c$ with the projection from $G$ to $G/A^{(B)}\cong B$. 

Since $B$ is a subgroup of a hyperbolic group, Corollary \ref{CR2} implies that either the cocycle $\bar{c}$ is amenable or  the cocycle $\bar{c}|(C^{(D)}\times Y_0)$ is essentially trivial. Since $c$ is an ME-cocycle, by Proposition \ref{ess_triv_amenable}(6), $c$ being amenable would imply that $B$ is amenable, which is a contradiction. Thus, we must have that the cocycle $\bar{c}|(C^{(D)}\times Y_0)$ is essentially trivial.
Hence, by using Lemma \ref{lem:ME_ICC_quotient} and that $B$ is ICC, we find a fundamental domain $Y\subset \Omega$ for the action of $G$ such that $C^{(D)}Y\subset A^{(B)}Y$.
Likewise, there is a fundamental domain $X\subset \Omega$ for the action of $H$ with $A^{(B)}X\subset C^{(D)}X$. Therefore, $A^{(B)}$ and $C^{(D)}$ are measure equivalent by Proposition \ref{prop:coupling_subgroup_quotient}.
\end{proof}

\begin{proof}[\bf Proof of Theorem \ref{A}] Proposition \ref{prop:direct_sum_family} implies that there exist $n\geq 2$ and an infinite family $\{A_k\}_{k=1}^\infty$ of $n$-generated ICC groups in class $\mathcal C$ which are pairwise not measure equivalent.  

For $x\in\{0,1\}^\mathbb N$, define $I_x=\{i\in\mathbb N\mid x(i)=1\}$ and $D_x=\bigoplus_{i\in I_x}A_i$. Since $A_i$ is $n$-generated, for every $i\geq 1$, Proposition \ref{WR} provides an ICC nonamenable normal subgroup $B$ of a torsion-free hyperbolic group such that for every $x\in\{0,1\}^{\mathbb N}$ we can find a property (T) group $G_x\in\mathcal W\mathcal R(D_x,B)$.


We claim that  the groups $\{G_x\}_{x\in\{0,1\}^{\mathbb N}}$ are pairwise non-measure equivalent. Assume that $G_x$ and $G_y$ are measure equivalent, for some $x,y\in\{0,1\}^{\mathbb N}$. By our choice of $B$, Theorem \ref{descend} implies that $D_x^{(B)}\cong\oplus_{i\in I_x}A_i^{(\mathbb N)}$ and $D_y^{(B)}\cong \oplus_{i\in I_y}A_i^{\mathbb N}$ are measure equivalent.
Since $A_k$ is ICC and in the class $\mathcal C$, for every $i\geq 1$, Theorem \ref{thm:MSinfiniteICCsum} 
implies that the sets of groups $\{A_i\}_{i\in I_x}$ and $\{A_i\}_{i\in I_y}$ coincide up to measure equivalence.
Since the groups $\{A_i\}_{i\geq 1}$ are pairwise not measure equivalent, we get $I_x=I_y$ hence $x=y$, which proves the claim and the main assertion of Theorem \ref{A}.

Towards proving the moreover assertion, let $x\in\{0,1\}^{\mathbb N}$. The K\"{u}nneth formula implies that $\beta_p^{(2)}(D_x^{(B)})=0$, for every $p\geq 1$. Using this, the fact that we have a short exact sequence $\{e\}\longrightarrow D_x^{(B)} \longrightarrow G_x \longrightarrow B\longrightarrow \{e\}$ and that $B$ has an element of infinite order, it follows from \cite[Theorem 7.2(6)]{Luck} that $\beta_p^{(2)}(G_x)=0$, for every $p\geq 1$.

Finally, we note that by using Proposition \ref{torsion-free-family} instead of Proposition \ref{prop:direct_sum_family}, we may assume that the groups $\{A_i\}_{i\geq 1}$ are torsion-free. Thus, if $x\in\{0,1\}^{\mathbb N}$, then $D_x$ is torsion-free. Further, since $G_x\in\mathcal W\mathcal R(D_x,B)$ and $B$ is torsion-free, we deduce that $G_x$ is torsion-free.
\end{proof}

\section{Measure equivalence and isomorphism on the space of marked finitely generated groups}\label{sec:Borel} 

\subsection{Borel reducibility}

Let $\mathcal{R}$ and $\mathcal{S}$ be equivalence relations on standard Borel spaces $X$ and $Y$, respectively. 
The equivalence relation $\mathcal{R}$ is said to be \emph{Borel reducible} to $\mathcal{S}$, denoted $\mathcal{R}\leq_B\mathcal{S}$, if there exists a Borel map $f:X\to Y$ such that $(x_0,x_1)\in \mathcal{R}$ if and only if $(f(x_0),f(x_1))\in \mathcal{S}$; such a map $f$ is called a \emph{Borel reduction} from $\mathcal{R}$ to $\mathcal{S}$.
The relation $\leq_B$ defines a preorder on such equivalence relations.
An equivalence relation $\mathcal{R}$ on a standard Borel space is called \emph{smooth} if it is Borel reducible to the equality relation on $\mathbb{R}$.

An equivalence relation $\mathcal{R}$ on a standard Borel space $X$ is called Borel if it is Borel as a subset of $X\times X$ (equipped with the product $\sigma$-algebra). 
$\mathcal{R}$ is called \emph{countable} if every $\mathcal{R}$-class is countable. 
A countable Borel equivalence relation $\mathcal{S}$ is said to be \emph{(countable) universal} if $\mathcal{R}\leq_B\mathcal{S}$ for every countable Borel equivalence relation $\mathcal{R}$. 
By \cite[Proposition 1.8]{DJK} the orbit equivalence relation $\mathcal{R}(\mathbb{F}_2\curvearrowright \{ 0,1\}^{\mathbb{F}_2})$ associated with the $2$-shift of the rank two free group is countable universal.

If $\mathcal{S}$ is a universal countable Borel equivalence relation then $\mathcal{S}$ is non-smooth, as is any equivalence relation $\mathcal{R}$ with $\mathcal{S}\leq_B\mathcal{R}$.
Informally speaking, such an $\mathcal{S}$ is in fact very far from being smooth; see \cite{KecCBER} for a comprehensive survey covering the $\leq_B$-preorder for countable Borel equivalence relations.

In this section we are interested in the Borel reducibility complexity of measure equivalence and isomorphism on the space of marked finitely generated groups.
While all equivalence classes of the isomorphism relation are countable, the same is not true for measure equivalence: for example, the set of all (marked, finitely generated) infinite amenable groups forms a single measure equivalence class of cardinality continuum.
Regardless, our goal is to prove that every countable Borel equivalence relation is Borel reducible to both of these relations on a certain subspace of groups.

\subsection{The space of marked finitely generated groups} For a group $G$, let $\mathrm{NSub}(G)$ be the space of all normal subgroups of $G$, equipped with the topology of pointwise convergence of indicator functions, which is compact metrizable.
We write $\mathrm{Quot}(G)$ for the space of all quotient groups of $G$, equipped with the topology making the group quotient map $\mathrm{NSub}(G)\to\mathrm{Quot}(G)$, $K\mapsto G/K$, a homeomorphism.

Let $\mathbb{F}_n$ be a free group, freely generated by $a_0,\dots ,a_{n-1}$, and let $\varphi_n :\mathbb{F}_{n+1}\to\mathbb{F}_{n}$ be the homomorphism that is the identity map on $a_0,\dots , a_{n-1}$ and sends $a_n$ to the identity element.  
This induces a topological embedding $\mathrm{Quot}(\mathbb{F}_{n})\to\mathrm{Quot}(\mathbb{F}_{n+1})$, $\mathbb{F}_{n}/K\mapsto \mathbb{F}_{n+1}/\varphi_n^{-1}(K)$, whose image is clopen.
The \emph{space of marked finitely generated groups}, denoted $\mathcal{G}_{\mathrm{fg}}$, is the associated direct limit of the spaces $\mathrm{Quot}(\mathbb{F}_n)$, $n\in\N$.

If $G$ is a finitely generated group then, by choosing a surjective homomorphism $\varphi :\mathbb{F}_{n}\to G$, we obtain a topological embedding $\mathrm{Quot}(G)\hookrightarrow\mathrm{Quot}(\mathbb{F}_{n})\subseteq\mathcal{G}_{\mathrm{fg}}$, $G/K\mapsto \mathbb{F}_{n}/\varphi^{-1}(K)$, that sends each group in $\mathrm{Quot}(G)$ to an isomorphic copy in $\mathrm{Quot}(\mathbb{F}_n)$.

\subsection{A Borel version of Theorem \ref{A}}

We will use the following corollary to the proof of Theorem \ref{A}.

\begin{corollary}\label{Borelfam}
There exists a property (T) group $V_0$ and a continuum sized Borel family $(G_x)_{x\in \{ 0, 1\}^{\N}}$ of quotients of $V_0$ whose members are pairwise not measure equivalent.
Each group $G_x$ is an ICC, torsion-free group with property (T).

Moreover, the Borel family $(G_x\ast \mathbb{Z} )_{x\in \{ 0,1\}^{\N}}$ consists of ICC, torsion-free groups in the class $\mathcal{C}$, which are pairwise not measure equivalent.
\end{corollary}

\begin{proof}
The proof of Theorem \ref{A} already produces the desired family, but in order to see  that the construction is Borel we need to be more explicit about how this family is obtained.

We fix, by Proposition \ref{WR}, a property (T) group $V_0$ with $V_0\in\mathcal{WR}(F^{(\N )},D_0)$, where $D_0$ is an ICC nonamenable subgroup of a torsion-free hyperbolic group, and $F=\mathbb F_4$ is the free group on $4$ generators.
We let $\varepsilon :V_0\to D_0$ be the corresponding quotient map whose kernel is the internal direct sum $\bigoplus_{d\in D_0}(F^{(\N )})_d$. 
The summands satisfy $v(F^{(\N )})_d v^{-1} = (F^{(\N )})_{\varepsilon (v)d}$ for each $v\in V_0$ and $d\in D_0$.
We identify $F^{(\N )}$ with the subgroup $(F^{(\N )})_e$ of $V_0$.   

We also fix, by Proposition \ref{torsion-free-family}, a countably infinite family $(A_i)_{i\in\N}$ of $4$-generated ICC torsion-free groups in the class $\mathcal{C}$ which are pairwise not measure equivalent; we may assume $A_i=F/N_i$ for some normal subgroup $N_i$ of $F$.

For $x\in \{ 0,1\}^\N$ we let $I_x\coloneqq \{ i\in \N : x(i)=1 \}$ and define the normal subgroup $K_x$ of $F^{(\N )}$ by $K_x\coloneqq (\bigoplus_{i\in I_x}N_i)\oplus F^{(\N \setminus I_x )}$.
That is, $K_x$ consists of all elements of $F^{(\N )}$ whose $i$-th coordinate belongs to $N_i$ for every $i\in I_x$.
Then $F^{(\N )}/K_x \cong \bigoplus_{i\in I_x}A_i$.
Moreover, the map $x\mapsto K_x$ is continuous.

Next, we recall some details from the proof of \cite[Lemma 2.12]{CIOS23a}. 
Each normal subgroup $K$ of $F^{(\N )}$ is also normal in $\bigoplus_{d\in D_0}(F^{(\N )})_d$ and hence a given conjugate $vKv^{-1}$ of $K$ by $v\in V_0$ only depends on $\varepsilon (v)$; we denote this conjugate by $(K)_d$ when $\varepsilon (v)=d$. 
The normal closure of $K$ in $V_0$ is then the internal direct sum $\ll K \rr = \bigoplus_{d\in D_0}(K)_d$. 
From this, one sees that $V_0/\ll K\rr \in \mathcal{WR}(F^{(\N )}/K , D_0)$. 
This also makes clear that the map $\mathrm{NSub}(F^{(\N )})\to \mathrm{NSub}(V_0)$, $K\mapsto \ll K\rr$, is continuous.

Thus, taking $G_x\coloneqq V_0/\ll K_x\rr$, the family $(G_x)_{x\in \{ 0, 1\}^\N}$ is Borel, and $G_x \in \mathcal{WR}(\bigoplus_{i\in I_x}A_i,D_0)$. As in the proof of Theorem \ref{A}, these groups are pairwise not measure equivalent, and each $G_x$ is a torsion-free ICC group with property (T).

Since each $G_x$ has property (T), we have $\beta_1^{(2)}(G_x)=0$, hence \cite[Theorem 1.5]{AG} implies that the groups in the family $(G_x\ast \mathbb{Z} )_{x\in \{ 0,1\}^{\N}}$ are pairwise not measure equivalent.
\end{proof}

\subsection{Proof of Theorem \ref{ctblered}}

\begin{proof}[Proof of Theorem \ref{ctblered}]
We fix $n\in \N$ such that the group $V_0$ from Corollary \ref{Borelfam} is $(n-1)$-generated, and hence all the groups in the Borel family $(G_x\ast \mathbb{Z} )$ produced by Corollary \ref{Borelfam} are $n$-generated.
We work in the setting of the proof of Proposition \ref{WR}, using this value of $n$ as input. 
We recall that at the beginning of the proof of Proposition \ref{WR} we fix a nontrivial torsion-free finitely presented group $L$. 
The argument then produces: 
\begin{itemize} 
\item a rank $n$ free group $H$, 
\item a torsion-free hyperbolic group $B$ with a surjective homomorphism $\sigma : B\to L$ whose kernel $D\coloneqq\ker(\sigma)$ is ICC and nonamenable, and
\item a group $W\in \mathcal{WR}(H,B)$ with an associated short exact sequence
\[
\{ e\}\to \bigoplus_{b\in B}H^{(b)} \xrightarrow{\iota} W\xrightarrow{\rho} B\to \{ e\},
\]
such that the group $V\coloneqq \rho^{-1}(D)$ has property (T).
\end{itemize}

Here, the map $\iota$ is subgroup inclusion, $H^{(e)}$ is a copy of $H$ in $W$, and the groups $H^{(b)}$, $b\in B$, are conjugates of $H^{(e)}$ in $W$ generating the (internal to $W$) direct sum $H^{(B)}=\bigoplus_{b\in B}H^{(b)}$. 
These conjugates satisfy $wH^{(b)}w^{-1} = H^{(\rho (w)b)}$ for all $w \in W$ and $b\in B$. 
We identify $H$ with $H^{(e)}$, so that $H\leq W$.

Note that if $N$ is a normal subgroup of $H$, then $N$ is normal in $H^{(B)}$, so if $\rho (w_0)=\rho (w_1)=b$ then $w_0Nw_0^{-1}=w_1Nw_1^{-1}$, and we unambiguously denote this conjugate of $N$ by $N^{(b)}$.
Then $wN^{(b)}w^{-1}=N^{(\rho (w)b)}$ for all $w\in W$ and $b\in B$. 

Equip the space $Y\coloneqq\mathrm{NSub}(H)^L$, of all $L$-indexed sequences of normal subgroups of $H$, with the product topology, which is compact metrizable. 
We denote an element of $Y$ by $M$, and for $M\in Y$ we denote its coordinates by $M(\ell )$, for $\ell \in L$.
Let $L$ act on $Y$ by the left shift action given by $(\ell_0\cdot M)(\ell_1 )=M(\ell_0^{-1}\ell_1)$.
For $M\in Y$, we define the subgroup $K_{M}$ of $H^{(B)}$ by
\[
K_{M}\coloneqq \bigoplus_{b\in B}M(\sigma (b))^{(b)} .
\]
\begin{claim}\mbox{}
\begin{enumerate}
\item[(1)] $wK_{M}w^{-1}=K_{\sigma(\rho (w))\cdot M}$ for all $w\in W$ and $M\in Y$. 
\item[(2)] For each $M\in Y$ the group $K_{M}$ is normal in $V$, and $V/K_{M} \in \mathcal{WR}(\bigoplus_{\ell\in L}H/M(\ell ) , D)$.
\item[(3)] The map $Y\to\mathrm{NSub}(V)$, $M\mapsto K_M$, is continuous.
\end{enumerate}
\end{claim}  

\begin{proof}
(1): We have
\begin{align*}
wK_Mw^{-1} = \bigoplus_{b\in B}M(\sigma (b))^{(\rho (w) b)} 
&= \bigoplus_{b\in B} (\sigma (\rho (w))\cdot M)(\sigma (\rho (w)b))^{(\rho (w)b)} \\
&= \bigoplus_{b\in B}(\sigma (\rho (w))\cdot M)(\sigma (b))^{(b)} \\
&= K_{\sigma (\rho (w))\cdot M}.
\end{align*} 

(2): Since $V=\mathrm{ker}(\sigma\circ\rho )$, it follows from part (1) that $K_M$ is normal in $V$.
Let $\tau :V\to V/K_M$ denote the quotient map. 
The restriction of $\rho$ to $V$ is then a composition $\rho |_V = \bar{\rho}\circ\tau$ where $\bar{\rho}: V/K_M \to D$ is surjective, with kernel $\tau (H^{(B)})$.  

Fix a section $s:L\to B$ of $\sigma$, and for each $d\in D$ define $P_d \coloneqq \tau \big(\bigoplus_{\ell\in L} H^{(ds(\ell ))} \big)$. 
Then
\[
P_d \cong \bigoplus_{\ell\in L}H^{(ds(\ell ))}/M(\ell )^{(ds(\ell ))} \cong \bigoplus_{\ell\in L}H/M(\ell ).
\]
The group $\ker (\bar{\rho})$ is the internal direct sum $\bigoplus_{d\in D}P_d$, and for $v\in V$ we have
\[
\tau (v)P_d\tau (v)^{-1} =\tau \big(\bigoplus_{\ell\in L} H^{(\bar{\rho}(\tau (v))ds(\ell ))} \big) = P_{\bar{\rho}(\tau (v))d}.  
\]
This shows $V/K_M \in \mathcal{WR}(\bigoplus_{\ell\in L}H/M(\ell ),D )$.

(3): This map takes values in $\mathrm{NSub}(H^{(B)})$, so it is enough to show that, for each $g\in H^{(B)}$, the preimage of the set $\{ K\in\mathrm{NSub}(H^{(B)}) : g\in K \}$ is clopen in $Y$. 
Given $g\in H^{(B)}$ there exist unique $b_0,\dots ,b_{r-1}\in B$ and $k_0\in H^{(b_0)},\dots , k_{r-1}\in H^{(b_{r-1})}$ with $g=k_0\cdots k_{r-1}$. 
Choose $w_0,\dots ,w_{r-1}\in W$ with $\rho(w_j)=b_j$ for each $0\leq j<r$. 
Then 
\[
\{ M\in Y :g\in K_M \} = \bigcap_{0\leq j<r}\{ M\in Y : w_j^{-1}k_jw_j\in M(\sigma(b_j)) \} ,
\]
which is clopen.
\end{proof}

Thus, for $M\in Y$, conjugation by $w\in W$ induces an isomorphism $V/K_M \cong V/K_{\sigma (\rho(w))\cdot M}$. 
The continuous map $Y\to\mathrm{Quot}(V)$, $M\mapsto V/K_M$, therefore takes points in the same $L$-orbit to isomorphic quotients of $V$.

Consider now a Borel action of $L$ on an uncountable standard Borel space $X$, and let $\mathcal{R}(L\curvearrowright X)$ denote the orbit equivalence relation generated by this action.

By Corollary \ref{Borelfam} and our choice of $n$, we may find a Borel map $X\to \mathrm{NSub}(H)$, $x\mapsto N_x$ such that the groups $(H/N_x )_{x\in X}$ are ICC torsion-free groups in the class $\mathcal{C}$ which are pairwise not measure equivalent.
We then obtain an $L$-equivariant map $X\to Y=\mathrm{NSub(H)}^L$, $x\mapsto M_x$ given by: $M_x(\ell ):=N_{\ell^{-1}\cdot x}$.

Thus, the Borel map $\Phi : X\to\mathrm{Quot}(V)$, given by $\Phi (x)\coloneqq V/K_{M_x}$, sends points in the same $L$-orbit to isomorphic groups. 

\begin{claim}
The map $\Phi$ is a Borel reduction from $\mathcal{R}(L\curvearrowright X)$ to both isomorphism and measure equivalence on the space $\mathrm{Quot}(V)$. 
Every group in the image of $\Phi$ is ICC, torsion-free, has property (T), and has vanishing $\ell^2$-Betti numbers in all dimensions.
\end{claim}

\begin{proof}
For the Borel reduction statement, it remains to show that if $V/K_{M_x}$ is measure equivalent to $V/K_{M_y}$, then $x$ and $y$ are in the same $L$-orbit.
To see this, observe that since $V/K_{M_x} \in \mathcal{WR}(\bigoplus_{\ell\in L}H/M_x(\ell ) , D)$, and $V/K_{M_y} \in \mathcal{WR}(\bigoplus_{\ell\in L}H/M_y(\ell ) , D)$, and $D$ is an ICC nonamenable subgroup of a hyperbolic group, it follows from Theorem \ref{descend} that the groups $\big(\bigoplus_{\ell\in L}H/M_x(\ell )\big)^{(D)}$ and $\big( \bigoplus_{\ell\in L}H/M_y(\ell )\big) ^{(D)}$ are measure equivalent.
Then, by Theorem \ref{thm:MSinfiniteICCsum}, it follows that $H/M_x(e)$ is measure equivalent to $H/M_y(\ell )$ for some $\ell$, i.e., $H/N_x$ is measure equivalent to $H/N_{\ell ^{-1}\cdot y}$. 
Our choice of the map $z\mapsto N_z$ now ensures that $x=\ell^{-1}\cdot y$, as claimed.

Since $\Phi (x)= V/K_{M_x} \in \mathcal{WR}(\bigoplus_{\ell\in L}H/M_x(\ell ) , D)$, and since $D$ and each of the groups $H/M_x(\ell )=H/N_{\ell^{-1}\cdot x}$ are ICC and torsion-free, the group $\Phi (x)$ is ICC and torsion-free.
Each group $\Phi (x)$ has property (T) since $V$ does. 
All $\ell^2$-Betti numbers of $\Phi (x)$ vanish by the same argument as in the proof of Theorem \ref{A}, using the K\"{u}nneth formula and \cite[Theorem 7.2(6)]{Luck}. 
\end{proof}

Since $L$ was an arbitrary torsion-free finitely presented group, and $X$ was an arbitrary uncountable Borel $L$-space, we are free to take $L=\mathbb{F}_2$, and the action of $\mathbb{F}_2$ on $X$ to be one that generates a universal countable Borel equivalence relation, e.g., the shift action of $\mathbb{F}_2$ on $\{ 0, 1\}^{\mathbb{F}_2}$ \cite[Proposition 1.8]{DJK}.
Taking $Z\coloneqq\Phi (X)$, this completes the proof of Theorem \ref{ctblered}.
\end{proof}

\bibliographystyle{amsalpha}
\bibliography{biblio}

\end{document}